%
%

\documentclass[MSNbibl,number,citesort,seceqn,dvips]{arxbj}
\usepackage{upgreek}

\aid{0}
\volume{20}
\issue{4}
\pubyear{2014}
\firstpage{2169}
\lastpage{2216}
\doi{10.3150/13-BEJ554} 

\makeatletter
\mathchardef\varPsi="0109
\newcommand{\rrVert}{\Vert}
\newcommand{\rrvert}{\vert}
\newcommand{\llVert}{\Vert}
\newcommand{\llvert}{\vert}
\newremark{remark}{Remark}[section]
\newtheorem{theorem}[remark]{Theorem}
\newtheorem{proposition}[remark]{Proposition}
\newtheorem{lemma}[remark]{Lemma}
\newtheorem{Hypothesis}{Hypothesis}

\newcommand{\trup}[2]{{#1}/{#2}}
\renewcommand{\epsilon}{\varepsilon}
\newcommand{\eqref}[1]{(\ref{#1})}
\newcommand{\IR}{\mathbb{R}}
\newcommand{\R}{\mathbb{R}}
\newcommand{\IN}{\mathbb{N}}
\newcommand{\Pb}{\mathbb{P}}
\newcommand{\E}{\mathbb{E}}
\newcommand{\tf}{\mathcal{F}}
\newcommand{\hac}{\mathcal{H}}
\newcommand{\hact}{\mathcal{H}_T}
\makeatother

\begin{document}
\begin{frontmatter}

\title{Approximation of a stochastic wave equation in dimension three,
with application to a support theorem in H\"older norm}
\runtitle{Approximation and support theorem}

\begin{aug}
\author{\inits{F.J.}\fnms{Francisco J.} \snm{Delgado-Vences}\thanksref{e1}\ead[label=e1,mark]{javier.delgado@ub.edu}} \and
\author{\inits{M.}\fnms{Marta} \snm{Sanz-Sol\'e}\corref{}\thanksref{e2}\ead[label=e2,mark]{marta.sanz@ub.edu}}
\address{Facultat de Matem\`atiques, Universitat de Barcelona, Gran
Via, 585
E-08007 Barcelona, Spain.\\ \printead{e1,e2}}
\end{aug}

\received{\smonth{12} \syear{2012}}

%
\begin{abstract}
A characterization of the support in H\"older norm of the law of the
solution to a stochastic wave equation with three-dimensional space
variable is proved. The result is a consequence of an approximation
theorem, in the convergence of probability, for a sequence of evolution
equations driven by a family of regularizations of the driving noise.
\end{abstract}

%
\begin{keyword}
\kwd{approximating schemes}
\kwd{stochastic wave equation}
\kwd{support theorem}
\end{keyword}

\end{frontmatter}

\section{Introduction}
\label{s1}

In this paper, we consider a stochastic wave equation with
three-dimensional spatial variable, and we prove a characterization of
the topological support of the law of the solution in a space of
H\"older continuous functions.

We focus on the stochastic partial differential equation
%
\begin{eqnarray}
\label{s1.1} \biggl(\frac{\partial^2}{\partial t^2} - \Delta \biggr) u(t,x) &=& \sigma
\bigl(u(t,x) \bigr) \dot M(t,x) + b \bigl(u(t,x) \bigr),
\nonumber
\\[-8pt]
\\[-8pt]
u(0,x) &=& \frac{\partial}{\partial t}u(0,x) = 0,
\nonumber
\end{eqnarray}
where $\Delta$ denotes the Laplacian on $\IR^3$, $T>0$ is fixed,
$t\in\,]0,T]$ and $x\in\IR^3$. The non-linear terms are defined by
functions $\sigma, b\dvtx  \IR\rightarrow\IR$. The notation $\dot
M(t,x)$ refers to the formal derivative of a Gaussian random field
white in the time variable and with a non-trivial covariance in space.
More explicitly, on a complete probability space $(\Omega, \mathcal
{G}, \Pb)$ we consider a Gaussian process
$M=\{M(\varphi), \varphi\in\mathcal{C}_0^\infty(\IR^{1+3})\}$,
where $\mathcal{C}_0^\infty(\IR^{1+3})$ denotes the space of
infinitely differentiable functions with compact support. We assume
that $E(M(\varphi))=0$ and that the covariance function of $M$ is
given by
%
\begin{equation}
\label{s1.2} \E\bigl(M(\varphi)M(\psi)\bigr)=\int_{\R^+}
\mathrm{d}s\int_{\R^3} \Gamma(\mathrm{d}x) \bigl(\varphi(s,
\cdot)\star\tilde\psi(s,\cdot) \bigr) (x),
\end{equation}
where ``$\star$'' denotes the convolution operator in the spatial
argument and $\tilde\psi(t,x)=\psi(t,-x)$. We suppose that $\Gamma$
is a measure on
$\IR^3$ absolutely continuous with respect to the Lebesgue measure
with density $f$ given by
%
\begin{equation}
\label{s1.3} f(x)=\vert x\vert^{-\beta},\qquad x\in\IR^3
\setminus\{0\}, \beta\in\,]0,2[.
\end{equation}
Let $\mathcal{S}(\R^3)$ be the space of rapidly decreasing functions
on $\R^3$. We denote by
$\tf$ the Fourier transform operator defined by
\[
\tf\varphi(\xi)= \int_{\R^3} \varphi(x)\exp\bigl(-2\uppi \mathrm{i} (
\xi\cdot x)\bigr) \,\mathrm{d}x,
\]
where the notation ``$\cdot$'' stands for the Euclidean inner product.
The covariance function \eqref{s1.2} can also be written as
\[
\E\bigl(M(\varphi)M(\psi)\bigr)= \int_0^\infty
\mathrm{d}s \int_{\IR^3}\mu(\mathrm{d}\xi) \tf\varphi(s) (\xi)
\overline{\tf\psi(s) (\xi)},
\]
where $\mu=\tf^{-1}f$.

We introduce the Hilbert space $\hac$ defined by the completion of
$\mathcal{S}(\IR^3)$ endowed with the semi-inner product
\[
\langle\varphi,\psi\rangle_\hac= \int_{\IR^3}\mu(
\mathrm{d}\xi)\tf \varphi(\xi) \overline{\tf\psi(\xi)}.
\]

Assume that $\varphi\in\hac$ is a finite measure. Then \cite{mat}, Lemma~12.12, page 162, gives
%
\begin{equation}
\label{fundamental} \Vert\varphi\Vert_{\hac}^2 = C \int
_{\IR^3} \bigl \vert\tf\varphi (\xi)\bigr \vert^2 |
\xi|^{-(3-\beta)}\,\mathrm{d}\xi =C\int_{\IR^3\times\IR^3}\varphi(
\mathrm{d}x)\varphi(\mathrm{d}y)\vert x-y\vert ^{-\beta} \,\mathrm{d}x \,
\mathrm{d}y,
\end{equation}
for some finite constant $C$. This identity extends easily to signed
finite measures $\varphi\in\hac$, by using the decomposition into a
difference of positive finite measures. We will apply \eqref
{fundamental} to $\varphi(\mathrm{d}x):= G(t,\mathrm{d}x)Z(t,x)$, where $G(t,\mathrm{d}x)$ is the
fundamental solution to the wave equation (the definition is given
later) and $Z(t,x)$ is an a.s. finite random variable.

The spaces $\hac$ and $\hac_t:=L^2([0,t]; \hac)$, $t\in\,]0,T]$, will
play an important role throughout the paper. It is useful to introduce
an isometric representation of theses spaces, as follows. Consider a
complete orthonormal basis $(e_j)_{j\in\mathbb{N}}\subset\mathcal
{S}(\IR^3)$ of $\hac$. Then the mappings
\[
\mathcal{I}\dvtx \hac\rightarrow{\ell}^2,\qquad
\mathcal{I}_T\dvtx \hact \rightarrow L^2 \bigl([0,T]; {
\ell}^2 \bigr)
\]
defined by
\[
\mathcal{I}(g)= \bigl(\langle g, e_j\rangle_{\hac}
\bigr)_{j\in
\mathbb{N}}, \qquad \mathcal{I}_T(\varphi) (t)= \bigl(
\bigl\langle\varphi(t,\ast), e_j\bigr\rangle_{\hac}
\bigr)_{j\in\mathbb{N}},\qquad t\in[0,T],
\]
respectively, are isometries. This provides an identification of $\hac
$, $\hact$ with ${\ell}^2$, $L^2 ([0,T]; {\ell}^2 )$,
respectively.

In a similar vein, the Gaussian process $M$ admits a representation as
a sequence $(W_j(t), t\in[0,T])_{j\in\mathbb{N}}$ of\vadjust{\goodbreak} independent
real-valued standard Brownian motions (see, e.g.,
\cite{dalang-quer}, Proposition~2.5). Indeed, this is given by the formula
\[
W_j(t):=M (1_{[0,t]}e_j ),\qquad j\in
\mathbb{N}, t\in[0,T].
\]
We refer the reader to \cite{dalangfrangos} for a rigorous derivation
of $M (1_{[0,t]}e_j )$ from the process $M$.

Along with the probability space $(\Omega, \mathcal{G}, \Pb)$, we
will consider the filtration $(\tf_t)_{t\in[0,T]}$ generated by the
process $\{W_j(t), j\in\mathbb{N}, t\in[0,T]\}$.


Let $G(t)$ be the fundamental solution to the wave equation in
dimension three. It is well-known that $G(t,\mathrm{d}x)=\frac{1}{4\uppi t}\sigma_t(\mathrm{d}x)$,
where $\sigma_t(x)$ denotes the uniform surface measure on the sphere
of radius $t$ with total mass $4\uppi t^2$ (see \cite{folland}). We interpret
\eqref{s1.1} as the evolution equation
%
\begin{eqnarray}
\label{s1.6}
u(t,x) &=& \int_0^t \int
_{\mathbb{R}^3} G(t-s,x-y) \sigma\bigl(u(s,y)\bigr) M(\mathrm{d}s,
\mathrm{d}y)
\nonumber
\\[-8pt]
\\[-8pt]
&&{}+ \int_0^t \bigl[G(t-s,\cdot)\star
b\bigl(u(s,\cdot)\bigr) \bigr](x) \,\mathrm{d}s,\nonumber
\end{eqnarray}
where the stochastic integral (also termed stochastic convolution) in
\eqref{s1.6} is defined as
%
\begin{eqnarray}
\label{s1.7}
&&\int_0^t \int
_{\mathbb{R}^3} G(t-s,x-y) \sigma\bigl(u(s,y)\bigr) M(\mathrm{d}s,
\mathrm{d}y)
\nonumber
\\[-8pt]
\\[-8pt]
&&\quad:=\sum_{j\in\mathbb{N}}\int_0^t
\bigl\langle G(t-s,x-\ast) \sigma \bigl(u(s,\ast)\bigr), e_j\bigr
\rangle_{\hac} W_j(\mathrm{d}s).\nonumber
\end{eqnarray}
The notation on the left-hand side of this identity suggests an
integration with respect to the martingale measure derived from the
Gaussian process $M$, as has been considered in \cite{dalang}, while
on the right-hand side, there is an It\^o integral with respect to the
infinite-dimensional Brownian motion $W=(W_j, j\in\mathbb{N})$. It
follows from \cite{dalang-quer}, Propositions 2.6, 2.9, that if
$Y(t,x):= \sigma(u(t,x))$, $(t,x)\in[0,T]\times\IR^3$, satisfies
$\sup_{(t,x)\in[0,T]\times\IR^3}\E(\vert Y(t,x)\vert^2)<\infty$,
then both integrals coincide.

Assume that the functions $\sigma$ and $b$ are Lipschitz continuous.
With the definition \eqref{s1.7}, Theorem~4.3 in \cite{dalang-quer}
gives the existence and uniqueness of a random field solution to
equation \eqref{s1.6} satisfying
$\sup_{(t,x)\in[0,T]\times\IR^3}\E(\vert u(t,x)\vert^p)<\infty$,
for any $p\in[1,\infty[$. This means a real-valued adapted stochastic
process such that \eqref{s1.6} holds a.s. for all $(t,x)\in
[0,T]\times\R^3$.
In Theorem~\ref{ts5.1}, we will give an extension of this result.

In \cite{dss}, equation \eqref{s1.6} has been formulated using the
stochastic integral introduced by Dalang and Mueller in \cite{DM}, and
a theorem of existence and uniqueness of a random field solution is
proved. Moreover, it is also established that the sample paths are
almost surely H\"older continuous
jointly in $(t,x)$, with degree $\rho$. For the particular covariance
density given in \eqref{s1.3}, $\rho\in\, ]0,\frac{2-\beta
}{2} [$. Appealing to \cite{dalang-quer}, Proposition~2.11, this
property holds for the solution of \eqref{s1.6} with the choice of
stochastic integral made in \eqref{s1.7}. More precisely,
fix $t_0\in[0,T]$ and a compact set $K\subset\IR^3$. For any $\rho
\in\,]0,1[$, and every real function $g$, set
\[
\Vert g\Vert_{\rho,t_0,K}:= \sup_{(t,x)\in[t_0,T]\times K} \bigl |g(t,x) \bigr |+ \mathop{
\sup_{(t,x),(\bar{t},\bar{x})\in[t_0,T]\times K}}_{t\ne
\bar{t}, x\ne\bar{x}} \frac{|g(t,x)-g(\bar{t},\bar{x})| }{(|t-\bar{t}|+|x-\bar{x}|)^\rho}.
\]
We denote by $\mathcal{C}^\rho([t_0,T]\times K)$ the space of real
functions $g$ such that
$\Vert g\Vert_{\rho,t_0,K}<\infty$.
Then \cite{dss}, Theorem~4.11, shows that, for any $\rho\in\,
]0,\frac{2-\beta}{2} [$,
$\Vert u\Vert_{\rho,t_0,K} \le c$, a.s., where $c$ is a finite random
variable, a.s.
This result tells us that the law of the solution of \eqref{s1.6},
when restricted to $[t_0,T]\times K$, is a probability on $\mathcal
{C}^\rho([t_0,T]\times K)$, with $\rho\in\, ]0,\frac{2-\beta
}{2} [$.


The analysis of the topological support under different kinds of norms,
like the supremum norm, H\"older norm, weighted Sobolev norms,
has been extensively studied for diffusion processes. As a
representative sample of references, let us mention \cite{stroock,mack,ba-g-l,g-n-ss}. Inspired by \cite
{aida-kusuoka-stroock}, Millet and Sanz-Sol\'e have introduced a method
for the characterization of the support of a random vector based
exclusively on approximations. For solutions to stochastic equations,
such approximations entail regularizations of the noise.
The paper \cite{millet-ss94b} illustrates the suitability of the
method by giving a very simplified proof of an extension of Stroock's
support theorem for diffusions. Moreover, the method
has also been successfully applied to several examples of stochastic
partial differential equations, like a reduced wave equation with
$d=1$, a stochastic heat equation with $d=1$ and a stochastic wave
equation with $d=2$ (see \cite{millet-ss94a,Ba-Mi-SS} and
\cite{milletss2}, resp.).

A motivation to study the support of a stochastic evolution equation
lies in the analysis of the uniqueness of invariant measures. Recently,
R. Cont and D. Fourni\'e have proved results on functional Kolmogorov
equations in the framework of a functional It\^o calculus (see \cite
{cont-fournie}). Assumptions concerning the support of some functionals
play a crucial role in their results. This provides an additional
motivation for our work.

In this paper, we apply the approximation method of \cite
{millet-ss94a} to obtain a characterization of the topological support
of the law of
$u$ (the solution to \eqref{s1.6}) in the H\"older norm $\Vert\cdot
\Vert_{\rho,t_0,K}$. The core of the work consists of an
approximation result for a family of equations more general than
equation \eqref{s1.6} by a sequence of pathwise evolution equations
obtained by a smooth approximation of the driving process $M$. In
finite dimensions, the celebrated
Wong--Zakai approximations for diffusions in the supremum norm could be
considered as an analogue. However there are two substantial
differences, first the type of equation we consider in this paper is
much more complex, and moreover we deal with a stronger topology.

For the sake of completeness, we give a brief description of the
procedure of \cite{millet-ss94a} in the particular context of this
work, and refer the reader to \cite{millet-ss94a} for further details.

Let $(\bar\Omega,\bar{\mathcal{G}}, \bar\mu)$ be the canonical
space of a standard real-valued Brownian motion on $[0,T]$. In the
sequel, the reference probability space will be
$(\Omega,\mathcal{G}, \Pb):=(\bar\Omega^{\mathbb{N}},\bar
{\mathcal{G}}^{\otimes\mathbb{N}}, \bar\mu^{\otimes\mathbb
{N}})$. By the preceding identification of $M$ with $(W_j, j\in\mathbb
{N})$, this is the canonical probability space of $M$.

Assume that there exists a measurable mapping $\xi_1\dvtx  L^2 ([0,T];
{\ell}^2 )\rightarrow\mathcal{C}^\rho([t_0,T]\times K)$, and
a sequence $w^n\dvtx  \Omega\rightarrow L^2 ([0,T]; {\ell}^2 )$
such that for every $\epsilon>0$,\vspace*{1.5pt}
%
\begin{equation}
\label{s1.8} \lim_{n\to\infty}\Pb \bigl\{\bigl \Vert u-\xi_1
\bigl(w^n\bigr)\bigr \Vert_{\rho
,t_0,K}>\epsilon \bigr\}=0.
\end{equation}
Then $\operatorname{supp}(u\circ\Pb^{-1}) \subset\overline{\xi_1
(L^2 ([0,T]; {\ell}^2 ) )}$, where the closure refers
to the H\"older norm $\Vert\cdot\Vert_{\rho,t_0,K}$.

Next, we assume that there exists a mapping $\xi_2\dvtx  L^2 ([0,T];
{\ell}^2 )\rightarrow\mathcal{C}^\rho([t_0,T]\times K)$ and
for any $h\in L^2 ([0,T]; {\ell}^2 )$, we suppose\vadjust{\goodbreak} that
there exist a sequence $T_n^h\dvtx  \Omega\rightarrow\Omega$ of
measurable transformations such that, for any $n\ge1$, the probability
$\Pb\circ(T_n^h)^{-1}$ is absolutely continuous with respect to $\Pb
$ and, for any $h\in L^2 ([0,T]; {\ell}^2 )$, $\epsilon>0$,
%
\begin{equation}
\label{s1.9} \lim_{n\to\infty}\Pb \bigl\{\bigl \Vert u
\bigl(T_n^h\bigr)-\xi_2(h)
\bigr \Vert_{\rho
,t_0,K}<\epsilon \bigr\}>0.
\end{equation}
Then $\operatorname{supp}(u\circ\Pb^{-1})\supset\overline{\xi_2
(L^2 ([0,T]; {\ell}^2 ) )}$.

For any $h\in L^2 ([0,T]; {\ell}^2 )$ (or equivalently,
$h\in\hact$), consider the deterministic evolution equation
%
\begin{equation}
\label{sm.h} \Phi^h(t,x)= \bigl\langle G(t-\cdot,x-\ast)\sigma
\bigl(\Phi^h(\cdot ,\ast)\bigr), h \bigr\rangle_{\mathcal{H}_t} +\int
_0^t \mathrm{d}s \bigl[G(t-s,\cdot)\star\bigl(
\Phi^h(s,\cdot)\bigr)\bigr](x).
\end{equation}
Similarly as for $u$, the mapping $(t,x)\in[t_0,T]\times K\mapsto\Phi
^h(t,x)$ belongs to $\mathcal{C}^\rho([t_0,T]\times K)$.

Let $\xi_1(h)=\xi_2(h)=\Phi^h$, and $(w^n)_{n\ge1}$ be given by
\eqref{s3.3}.
From \eqref{s3.4} and the isometric representation of $\hact$, we see that
$w^n\dvtx  \Omega\rightarrow L^2 ([0,T]; {\ell}^2 )$.
Given $h\in L^2 ([0,T]; {\ell}^2 )$, we define
%
\begin{equation}
\label{sm.1} T_n^h(\omega) = \omega+h-w^n.
\end{equation}
By Girsanov's theorem, the probability $\Pb\circ(T_n^h)^{-1}$ is
absolutely continuous with respect to $\Pb$.

According to \eqref{s1.8}, \eqref{s1.9}, the final objective is to
prove that
\[
\lim_{n\to\infty}\Phi^{w^n}=u,\qquad \lim
_{n\to\infty}u\bigl(T_n^h\bigr)=
\Phi^h,
\]
in probability and with the H\"older norm $\Vert\cdot\Vert_{\rho
,t_0,T}$. Then, by the preceding discussion we infer that
the support of the law of $u$ in the H\"older norm is the closure of
the set of functions $\{\Phi^h, h\in\hact\}$ (see Theorem~\ref
{tsm.1} for the rigorous statement).
Notice that the characterization of the support does not depend on the
approximating sequence $(w^n)_{n\in\mathbb{N}}$.

The paper is structured as follows. The next Section~\ref{s3} is
devoted to a general approximation result.
This is the hard core of the work (see Theorem~\ref{ts3.1}). We
postpone for a while a more extensive description of its content.
Section~\ref{sm} is devoted to the proof of the characterization of
the support of $u$. It is a corollary of Theorem~\ref{ts3.1}.
Section~\ref{s4} is of technical character. It is devoted to establish some
auxiliary results which are needed in some proofs of Section~\ref{s3}.
In the \hyperref[s5]{Appendix},
a theorem on existence and uniqueness of a random
field solution for a quite general evolution equation is proved. It
provides the rigorous setting for all the stochastic partial
differential equations that appear in this paper. The section also
contains two known but fundamental results used at some crucial parts
of the proofs of Sections~\ref{s3} and~\ref{sm}.

We end this introduction with a more detailed description of
Section~\ref{s3} devoted to the proof of the approximation result (see Theorem~\ref{ts3.1}).
The method we use is similar as in \cite{milletss2}, where the case
$d=2$ was studied. Nevertheless, for $d=3$ its implementation entails
substantial differences and new difficulties. The reason for this is
that the fundamental solution of the wave equation in dimension three
is a measure and not a real-valued function, as in dimension two.

As was formulated in \cite{Ba-Mi-SS}, and further developed in \cite
{milletss2}, there are two main elements in the proof of Theorem~\ref
{ts3.1}: a control on the $L^p(\Omega)$-increments in time and in
space of the processes $X$ and $X_n$, independently of $n$, and
$L^p(\Omega)$ convergence of $X_n(t,x)$ to $X(t,x)$, for any $(t,x)$.
The precise assertions are given in Theorems~\ref{ts3.2} and~\ref
{ts3.3}, respectively.

For the sake of illustration, we sketch one of the difficulties
encountered in the proof of Theorem~\ref{ts3.2}.
Consider either stochastic or pathwise integrals with integrands of the form
\[
\bigl[G(\bar t-s, x-\mathrm{d}y)-G(t-s, \bar x-\mathrm{d}y) \bigr] Z(s,y),\qquad
0<t \le\bar t\le T, x,\bar x\in\IR^3,
\]
where $Z(s,y)$ is a stochastic process. We want estimates of some norms
of these expressions
in terms of powers of the increments $|\bar t-t|$, $|\bar x-x|$.
In dimension $d=2$, $G(t,\mathrm{d}x)= G(t,x)\,\mathrm{d}x$ and the problem is solved using
direct computations on the
function differences $G(\bar t-s, x-y)-G(t-s, \bar x-y)$. For $d=3$,
this approach fails.

In \cite{dss}, this problem was tackled by passing increments of the
measure $G$ to increments of $Z$, by means of a change of variables. We
shall apply repeatedly this idea throughout the paper.
However, there are some significant differences between the arguments
in \cite{dss} and those used here. In \cite{dss}, the formulation of
equation \eqref{s1.6} is based on Dalang--Mueller stochastic integral
-- a functional type integral in the spatial variable developed in \cite
{DM}. Hence, pointwise arguments in the space variable are excluded.
Instead they use fractional Sobolev norms and Sobolev's embedding
theorem. Moreover, in \cite{dss} a regularization of the distribution
$G$ is systematically used and final results are obtained by passing to
the limit.
With the selection of the stochastic integral given in \eqref{s1.7} it
is not necessary to appeal to Sobolev's embedding theorem. Moreover,
applying \eqref{fundamental} we avoid the regularization of $G$. There
is yet another difference that deserves to be mentioned. In \cite
{dss}, non-null initial conditions were considered, while here
$u_0=v_0=0$. As a consequence, the random fields $X_n$ and $X$ possess
the stationary property described in Remark~\ref{rs3.1}. This fact is
frequently used in the proofs.

For an It\^o's stochastic differential equation, smoothing the noise
leads to a Stratonovich (or pathwise) type integral, and the correction
term between the two kinds of integrals appears naturally in the
approximating scheme. In our setting, correction terms explode and
therefore they must be avoided. Instead, a control on the growth of the
regularized noise is used.
This method was introduced in \cite{milletss2} and successfully
applied here too. The control is achieved by introducing a localization
in $\Omega$ (see \eqref{localization}). With this method, the
convergence of the approximating sequence $X_n$ to $X$ takes place in
probability.

Let us finally remark that using the method of the proof of Theorem~\ref{ts3.2}, a different but simplified proof of \cite{dss}, Theorem~4.11, in the particular case of null initial conditions can be provided.

Throughout the paper, we shall often call different positive and finite
constants by the same notation, even if they differ from one place to another.

\section{Approximations of the wave equation}
\label{s3}

Consider smooth approximations of $W$ defined as follows. Fix $n\in\IN
$ and consider the partition
of $[0,T]$ determined by $\frac{iT}{2^n}$, $i=0,1,\ldots,2^n$. Denote
by $\Delta_i$ the interval $[\frac{iT}{2^n},
\frac{(i+1)T}{2^n}[$ and by $|\Delta_i|$ its length. We write
$W_j(\Delta_i)$ for the increment $W_j(\frac{(i+1)T}{2^n})-
W_j(\frac{iT}{2^n})$, $i=0,\ldots,2^n-1$, $j\in\IN$. Define
differentiable approximations of $(W_j, j\in\mathbb{N})$ as follows:
\[
W^n= \biggl(W_j^n=\int
_0^\cdot\dot{W}_j^n(s)
\,\mathrm{d}s, j\in\IN \biggr),
\]
where for $j>n$, $\dot{W}_j^n=0$, and for $1\le j\le n$,
\[
\dot{W}_j^n(t)= %
\cases{ \displaystyle\sum
_{i=0}^{2^n-2} 2^{n}T^{-1}W_j(
\Delta_i)1_{\Delta
_{i+1}}(t) &\quad $\mbox{if }t\in
\bigl[2^{-n}T,T\bigr]$,
\cr
&
\cr
0 &\quad$\mbox{if }t
\in\bigl [0,2^{-n}T\bigr [$. } %
\]
Set
%
\begin{equation}
\label{s3.3} w^n(t,x)=\sum_{j\in\IN}
\dot{W}_j^n(t) e_j(x).
\end{equation}
It is easy to check that, for any $p\in[2,\infty[$,
%
\begin{equation}
\label{s3.4} \bigl \|w^n\bigr \|_{L^p(\Omega,\mathcal{H}_T)}\le C n^{{\trup{1}{2}}}2^{n/2}.
\end{equation}
In particular, from \eqref{s3.4} it follows that $w^n$ belongs to
$\mathcal{H}_T$ a.s.

In this section, we shall consider the equations
%
\begin{eqnarray}
\label{s3.7}X(t,x) &=&\int_0^t \int
_{\R^3} G(t-s,x-y) (A+B) \bigl(X(s,y)\bigr) M(\mathrm{d}s,
\mathrm{d}y)
\nonumber
\\
&&{}+\bigl\langle G(t-\cdot,x-\ast)D\bigl(X(\cdot,\ast)\bigr),h\bigr
\rangle_{\mathcal{H}_t}
\\
&&{}+\int_0^t \bigl[G(t-s,
\cdot)\star b\bigl(X(s,\cdot)\bigr) \bigr](x)\, \mathrm{d}s,
\nonumber
\\
\label{s3.6}X_n(t,x) &=&\int_0^t
\int_{\R^3} G(t-s,x-y) A\bigl(X_n(s,y)\bigr) M(
\mathrm{d}s,\mathrm{d}y)
\nonumber
\\
&&{} +\bigl\langle G(t-\cdot,x-\ast)B\bigl(X_n(\cdot,\ast)
\bigr),w^n\bigr\rangle_{\mathcal{H}_t}
\nonumber
\\[-8pt]
\\[-8pt]
&&{}+\bigl\langle G(t-\cdot,x-
\ast)D\bigl(X_n(\cdot,\ast)\bigr),h \bigr\rangle_{\mathcal{H}_t}\nonumber
\\
&&{} +\int_0^t \bigl[G(t-s,\cdot)\star b
\bigl(X_n(s,\cdot)\bigr) \bigr](x) \,\mathrm{d}s,
\nonumber
\end{eqnarray}
where $n\in\IN$, $h\in\mathcal{H}_T$, $w^n$ defined as in (\ref{s3.3}) and $A, B, D, b\dvtx \R\to\R$.

Moreover, we also need the slight modification of these equations
defined by
%
\begin{eqnarray}
\label{s3.8.2} X_n^-(t,x) &=&\int_0^{t_n}
\int_{\R^3} G(t-s,x-y) A\bigl(X_n(s,y)\bigr) M(
\mathrm{d}s,\mathrm{d}y)
\nonumber
\\
&&{} +\bigl\langle G(t-\cdot,x-\ast)B\bigl(X_n(\cdot,\ast)
\bigr)1_{[0,t_n]}(\cdot ),w^n\bigr\rangle_{\mathcal{H}_t}
\nonumber
\\[-8pt]
\\[-8pt]
&&{} +\bigl\langle G(t-\cdot,x-\ast)D\bigl(X_n(\cdot,\ast)
\bigr)1_{[0,t_n]}(\cdot ),h\bigr\rangle_{\mathcal{H}_t}
\nonumber
\\
&&{} +\int_0^{t_n} \bigl[G(t-s,\cdot)\star b
\bigl(X_n(s,\cdot)\bigr) \bigr](x)\, \mathrm{d}s,
\nonumber
\\
\label{s3.8.3} X(t,t_n,x) &=&\int_0^{t_n}
\int_{\R^3} G(t-s,x-y) (A+B) \bigl(X(s,y)\bigr) M(\mathrm{d}s,
\mathrm{d}y)
\nonumber
\\
&&{} +\bigl\langle G(t-\cdot,x-\ast)D\bigl(X(\cdot,\ast)\bigr)1_{[0,t_n]}(
\cdot ),h\bigr\rangle_{\mathcal{H}_t}
\\
&&{} +\int_0^{t_n} \bigl[G(t-s,\cdot)\star b
\bigl(X(s,\cdot)\bigr) \bigr](x) \,\mathrm{d}s,
\nonumber
\end{eqnarray}
where for any $n\in\IN$, $t\in[0,T]$, $t_n= \max\{\underline{t}_n-
2^{-n}T, 0 \}$, with
%
\begin{equation}
\label{s3.8.1} \underline{t}_n = \max \bigl\{k2^{-n}T,
k=1,\ldots,2^n-1\dvtx k2^{-n}T\le t \bigr\}.
\end{equation}

We will consider the following assumption.
\renewcommand{\theHypothesis}{(B)}
\begin{Hypothesis}\label{HypB}
The coefficients $A,B,D,b\dvtx \R\mapsto\R$ are
globally Lipschitz continuous.
\end{Hypothesis}

Notice that equation \eqref{s3.6} is more general than \eqref{s3.7}
and \eqref{s1.6}. In Theorem~\ref{ts5.1}, we prove a result on
existence and uniqueness of a random field solution to
a class of SPDEs which applies to equation \eqref{s3.6}.

\begin{remark}
\label{rs3.1}
As a consequence of Remark~\ref{r5.1}, we have the following
translation invariance of moments:
%
\begin{eqnarray}
\label{s3.est} \E \bigl(\bigl \vert X(t,x-y-z)-X(t,y-z)\bigr \vert^p \bigr)&=&
\E \bigl(\bigl \vert X(t,x-y)-X(t,y)\bigr \vert^p \bigr),
\nonumber
\\[-8pt]
\\[-8pt]
\E \bigl(\bigl \vert X_n(t,x-y-z)-X_n(t,y-z)
\bigr \vert^p \bigr)&=& \E \bigl(\bigl \vert X_n(t,x-y)-X_n(t,y)
\bigr \vert^p \bigr),
\nonumber
\end{eqnarray}
for any $x,y,z\in\IR^3$ and any $p\in[1,\infty[$. Consequently, a
similar property also holds for $X_n^-(t,\ast)$ and $X_n(t,t_n,\ast)$
defined in \eqref{s3.8.2}, \eqref{s3.8.3}, respectively
\end{remark}

The aim of this section is to prove the following theorem.

\begin{theorem}
\label{ts3.1}
We assume Hypothesis~\textup{\ref{HypB}}. Fix $t_0>0$ and a compact set $K\subset\R
^3$. Then for any
$\rho\in\, ]0,\frac{2-\beta}{2} [$
and $\lambda>0$,
%
\begin{equation}
\label{s3.9} \lim_{n\to\infty} \Pb \bigl(\|X_n-X
\|_{\rho,t_0,K}> \lambda \bigr)=0.
\end{equation}
\end{theorem}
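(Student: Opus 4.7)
The plan is to deduce H\"older-norm convergence in probability from two ingredients that the authors announce as Theorem \ref{ts3.2} and Theorem \ref{ts3.3}, namely uniform (in $n$) moment bounds for the $L^p$-increments of $X$ and $X_n$ in time and space, and pointwise $L^p$-convergence $X_n(t,x)\to X(t,x)$. The standard device for going from these two inputs to convergence in the H\"older norm $\|\cdot\|_{\rho,t_0,K}$ is to pick an intermediate exponent $\rho'\in\bigl(\rho,\tfrac{2-\beta}{2}\bigr)$, obtain uniform control of the $\rho'$-H\"older moments via Kolmogorov's continuity criterion, and then interpolate between this strong uniform bound and pointwise convergence to get convergence in the weaker $\rho$-H\"older norm.

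First I would fix $\rho'\in\bigl(\rho,\tfrac{2-\beta}{2}\bigr)$ and invoke Theorem \ref{ts3.2} to obtain estimates of the form
\begin{equation*}
\sup_{n\in\IN}\E\bigl|Y(t,x)-Y(\bar t,\bar x)\bigr|^p \le C\bigl(|t-\bar t|+|x-\bar x|\bigr)^{\gamma p},
\end{equation*}
valid for $Y\in\{X,X_n\}$, for every $(t,x),(\bar t,\bar x)\in[t_0,T]\times K$, any $\gamma<\tfrac{2-\beta}{2}$, and any $p\in[1,\infty)$. Choosing $\gamma\in(\rho',\tfrac{2-\beta}{2})$ and $p$ so large that $(\gamma-\rho')p>4$, Kolmogorov's continuity criterion on the compact set $[t_0,T]\times K\subset\IR^4$ yields
\begin{equation*}
\sup_{n\in\IN} \E\|X_n\|_{\rho',t_0,K}^p+\E\|X\|_{\rho',t_0,K}^p \le C.
\end{equation*}
By Markov's inequality, for every $\eta>0$ one can fix $M=M(\eta)$ with $\sup_n \Pb\bigl(\|X_n-X\|_{\rho',t_0,K}>M\bigr)<\eta$, so the family $\{X_n-X\}_{n\ge 1}$ is bounded in probability in $\mathcal{C}^{\rho'}([t_0,T]\times K)$.

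Second, on the event $A_{n,M}:=\{\|X_n-X\|_{\rho',t_0,K}\le M\}$ the standard interpolation
\begin{equation*}
[f]_{\rho,t_0,K} \le C\,[f]_{\rho',t_0,K}^{\rho/\rho'}\,\|f\|_{\infty,t_0,K}^{1-\rho/\rho'}
\end{equation*}
reduces $\rho$-H\"older convergence to uniform convergence on $[t_0,T]\times K$. The latter is obtained by equicontinuity: cover $[t_0,T]\times K$ by a finite $\delta$-net $\{(t_k,x_k)\}_{k=1}^N$, bound oscillations over each cell by $2M\delta^{\rho'}$ on $A_{n,M}$, and use Theorem \ref{ts3.3} (pointwise $L^p$-convergence, hence in probability) on the finite net. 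Splitting $\{\|X_n-X\|_{\rho,t_0,K}>\lambda\}$ into its intersection with $A_{n,M}$ and its complement, picking $M$ so that $\Pb(A_{n,M}^c)<\eta/2$, then $\delta$ so that $2M\delta^{\rho'}$ is much smaller than $\lambda$, and finally $n$ large so that the finite-net pointwise control is smaller than $\eta/2$, yields \eqref{s3.9}.

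The main obstacle is not this deduction but securing the two inputs. The uniform $L^p$-increment estimate in Theorem \ref{ts3.2} is delicate because in dimension three $G(t,\cdot)$ is a surface measure, so pointwise difference arguments on $G(\bar t-s,x-\cdot)-G(t-s,\bar x-\cdot)$ fail and one must transfer the increment onto the integrand $Z$ via the change-of-variable of \cite{dss} together with the energy identity \eqref{fundamental}. Moreover, the smoothed noise $w^n$ has $\mathcal{H}_T$-norm of order $n^{1/2}2^{n/2}$ by \eqref{s3.4}, so the inner product $\langle G\,B(X_n),w^n\rangle_{\mathcal{H}_t}$ in \eqref{s3.6} cannot be estimated in $L^p(\Omega)$ uniformly in $n$; this forces the localization in $\Omega$ alluded to in \eqref{localization} and explains why the conclusion is in probability rather than in $L^p$. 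With Theorems \ref{ts3.2} and \ref{ts3.3} in hand, Theorem \ref{ts3.1} follows by the interpolation-plus-equicontinuity argument sketched above.
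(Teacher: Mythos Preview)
Your overall strategy is sound, but the deduction as written has a genuine gap: you misquote Theorem~\ref{ts3.2}. That theorem does \emph{not} give
\[
\sup_{n}\E\bigl|X_n(t,x)-X_n(\bar t,\bar x)\bigr|^p \le C\bigl(|t-\bar t|+|x-\bar x|\bigr)^{\gamma p};
\]
it only gives the \emph{localized} estimate
\[
\sup_{n}\bigl\|\bigl[X_n(t,x)-X_n(\bar t,\bar x)\bigr]1_{L_n(\bar t)}\bigr\|_p \le C\bigl(|t-\bar t|+|x-\bar x|\bigr)^{\rho},
\]
and likewise Theorem~\ref{ts3.3} is stated with $1_{L_n(t)}$ inside. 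You yourself note in the last paragraph that $\|w^n\|_{\mathcal{H}_T}$ explodes, which is precisely why the unlocalized increment bound for $X_n$ is \emph{not} available uniformly in $n$. Consequently your Kolmogorov step ``$\sup_n\E\|X_n\|_{\rho',t_0,K}^p\le C$'' is not justified, and the whole interpolation/equicontinuity argument, as written, is built on an estimate you do not have.

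The fix is easy but must be made explicit: since $L_n(\bar t)\supset L_n(T)$ for all $\bar t\le T$, Theorem~\ref{ts3.2} yields increment bounds for the process $(X_n-X)1_{L_n(T)}$, and Kolmogorov then gives $\sup_n\E\bigl(\|X_n-X\|_{\rho',t_0,K}^p\,1_{L_n(T)}\bigr)<\infty$. Your interpolation and $\delta$-net steps can then be run entirely on the event $L_n(T)$, and the conclusion follows from $\Pb(L_n(T)^c)\to 0$. This is essentially what the paper does, but the paper packages the localized Kolmogorov argument and the pointwise convergence into a single black box, Lemma~\ref{ls5.2}: applying it to $Y_n=X_n-X$ and $B_n(t)=L_n(t)$ with $\delta=p\rho-4$ gives directly $\E\bigl(\|X_n-X\|_{\rho,t_0,K}^r\,1_{L_n(T)}\bigr)\to 0$, after which Chebyshev plus $\Pb(L_n(T)^c)\to 0$ finishes. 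Your route via an intermediate $\rho'$, interpolation, and a finite net is a valid and more hands-on alternative to Lemma~\ref{ls5.2}, but it only works once the localization is carried through every step, not just mentioned as an afterthought.
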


The convergence \eqref{s3.9} will be proved through several steps.
The main ingredients are local $L^p$ estimates of increments of $X_n$
and $X$, in time and in space, and a local $L^p$
convergence of the sequence $X_n(t,x)$ to $X(t,x)$.

Let us describe the \textit{localization} procedure (see \cite
{milletss2}). Fix $\alpha>0$. For any integer $n\ge1$ and $t\in
[0,T]$, define
%
\begin{equation}
\label{localization} L_{n}(t)= \Bigl\{\sup_{1\le j\le n} \sup
_{0\le i\le[2^nt
T^{-1}-1]^+} \bigl |W_j(\Delta_i)\bigr |\le\alpha
n^{\trup{1}{2}}2^{-\trup
{n}{2}} \Bigr\},
\end{equation}
where $\alpha>(2\ln2)^{\trup{1}{2}}$. Notice that the sets $L_n(t)$
decrease with $t\ge0$. Moreover, in \cite{milletss2}, Lemma~2.1, it is
proved that $\lim_{n\to\infty} \Pb(L_n(t)^c)=0$.

It is easy to check that
%
\begin{equation}
\label{s3.101} \bigl \Vert w^n(t,\ast)1_{L_n(t)}
\bigr \Vert_{\mathcal{H}}\le C n^{\trup{3}{2}} 2^{\trup{n}{2}}.
\end{equation}
Moreover, for any $0\le t\le t'\le T$
\[
\bigl\llVert w^{n}1_{L_n(t')}1_{[t,t']}\bigr\rrVert
_{\mathcal{H}_T}\le Cn^{\trup{3}{2}}2^{\trup{n}{2}}\bigl |t'-t\bigr |^{\trup{1}{2}}.
\]
In particular, if $[t,t']\subset\Delta_i$ for some $i=0,\ldots
,2^n-1$, then
%
\begin{equation}
\label{s3.14} \bigl\llVert w^{n}1_{L_n(t')}1_{[t,t']}
\bigr\rrVert _{\mathcal{H}_T}\le Cn^{\trup{3}{2}}.
\end{equation}


As has been announced in the \hyperref[s1]{Introduction}, the proof of Theorem~\ref
{ts3.1} will follow from Theorems~\ref{ts3.2} and~\ref{ts3.3} below.
We denote by $\Vert\cdot\Vert_p$ the
$L^p(\Omega)$ norm.
%
\begin{theorem}
\label{ts3.2}
We assume Hypothesis~\textup{\ref{HypB}}. Fix $t_0\in\,]0, T[$ and a compact subset
$K\subset\IR^3$. Let $t_0\le t\le\bar{t}\le T$, $x,\bar{x}\in K$.
Then, for any
$p\in[1,\infty)$ and any $\rho\in\,]0,\frac{2-\beta}{2}
[$, there exists a positive constant $C$ such that
%
\begin{equation}
\label{s3.15} \sup_{n\ge1} \bigl\llVert \bigl[X_n(t,x)-X_n(
\bar{t},\bar{x}) \bigr] 1_{L_n(\bar{t})} \bigr\rrVert _p \le C \bigl( |
\bar{t}- t |+ |\bar{x}-x | \bigr)^{\rho}.
\end{equation}
\end{theorem}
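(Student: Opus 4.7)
The plan is to control the four terms in the defining equation \eqref{s3.6} separately, splitting the increment by the triangle inequality into the pure temporal piece $X_n(t,\bar x)-X_n(\bar t,\bar x)$ and the pure spatial piece $X_n(\bar t,x)-X_n(\bar t,\bar x)$. For the stochastic integral I would apply Burkholder's inequality to reduce the $L^p$-norm to an $\mathcal{H}_t$-norm of the $G$-integrand, and then use the identity \eqref{fundamental} to rewrite that norm as a double integral against the Riesz kernel $|y-y'|^{-\beta}$. The pathwise terms against $h$ and $w^n$ would be handled by Cauchy–Schwarz in $\mathcal{H}_t$, extracting $\|h\|_{\mathcal{H}_T}$ (a fixed constant) and, on $L_n(\bar t)$, the localized norm of $w^n$; the drift $b$-term is a routine convolution estimate. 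The factors $A(X_n), B(X_n), D(X_n), b(X_n)$ would be absorbed by their Lipschitz property together with the uniform $L^p(\Omega)$ bounds of Theorem \ref{ts5.1}, closing the resulting integral inequality via Gronwall on a quantity of the form $\Phi_n^p(\tau):=\sup\|[X_n(t,x)-X_n(\bar t,\bar x)]1_{L_n(\tau)}\|_p^p/(|\bar t-t|+|\bar x-x|)^{\rho p}$, the supremum being taken over $t_0\le t\le \bar t\le\tau$, $x,\bar x\in K$.

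The substantive work lies in the $\mathcal{H}_t$-estimates of the $G$-integrand increments. On $[t,\bar t]$, where only the future measure is present, one parametrizes $y=\bar t\eta$ with $\eta\in S^2$ on the support of $G$ and, through \eqref{fundamental} and integration against $|\eta-\eta'|^{-\beta}$ on the sphere, obtains a contribution controlled by a power of $|\bar t-t|$. On $[0,t]$ we encounter the signed increment $G(\bar t-s,\bar x-dy)-G(t-s,x-dy)$, which cannot be handled pointwise because $G$ is a surface measure; following the device of \cite{dss} advertised in the introduction, I would reparametrize each sphere by $\eta\in S^2$ and transfer the increment of $G$ onto an increment of $A(X_n)$ or $B(X_n)$ evaluated at two nearby points, then use the Lipschitz property, the translation invariance of moments stated in Remark \ref{rs3.1}, and the Hölder-type quantity $\Phi_n$ we are iterating. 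The final angular integrations pin down the admissible exponent $\rho<(2-\beta)/2$.

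The main obstacle will be the $w^n$-integral: the crude bound \eqref{s3.101} yields $\|w^n1_{L_n(\bar t)}\|_{\mathcal{H}_T}\le Cn^{3/2}2^{n/2}$, which is hopeless if used globally. To exploit the localization, I would decompose the outer time integration along the dyadic partition of width $2^{-n}T$ and, on each sub-interval contained in some $\Delta_i$, invoke the sharper bound \eqref{s3.14} giving only a factor $Cn^{3/2}$; the prefactor $2^{n/2}$ is then absorbed by the smallness of the $\mathcal{H}_t$-norm of the $G$-integrand restricted to that sub-interval, after applying \eqref{fundamental} and the sphere parametrization as above. This is precisely why the regularization $w^n$ defined in \eqref{s3.3} is engineered to vanish on $[0,2^{-n}T[$ and why the localization $1_{L_n(\bar t)}$ is needed. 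Once all four contributions are assembled into an integral inequality for $\Phi_n$, Gronwall's lemma delivers a bound uniform in $n$, which is \eqref{s3.15}.
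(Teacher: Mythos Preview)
Your overall architecture is close to the paper's---split space/time, Burkholder for the stochastic integral, the identity \eqref{fundamental} to express the $\mathcal{H}$-norm, and the change-of-variables device from \cite{dss} to transfer increments of $G$ to increments of the integrand---but the treatment of the $w^n$-term contains a genuine gap.

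Your plan to decompose the time integral along the dyadic partition and use \eqref{s3.14} on each $\Delta_i$ does not recover the missing factor. Writing the inner product as a sum over $i$ and applying Cauchy--Schwarz on each piece gives $Cn^{3/2}\sum_i\Vert G(t-\cdot,x-\ast)B(X_n)1_{\Delta_i}\Vert_{\mathcal{H}_t}$; the $\ell^1$--$\ell^2$ inequality on the $2^n$ summands then restores exactly the factor $2^{n/2}$ you are trying to avoid, so you end up with the crude global bound \eqref{s3.101} again. No smallness of the $G$-increment on a single sub-interval can help, because there is no cancellation across sub-intervals when you bound by absolute values.

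The paper's mechanism is different and essential: one introduces the auxiliary process $X_n^-$ (see \eqref{s3.8.2}), which is $\mathcal{F}_{s_n}$-measurable, and splits $B(X_n)=B(X_n^-)+\hat B(X_n)$ with $\hat B(X_n)=B(X_n)-B(X_n^-)$. For the $B(X_n^-)$-part, the pathwise integral against $w^n$ is \emph{rewritten as a genuine stochastic integral} against $M$ via the bounded operator $\pi_n\circ\tau_n$ (see the discussion around \eqref{s3.34}); Burkholder then applies and no $2^{n/2}$ appears. For the remainder $\hat B(X_n)$, one does use the crude bound $n^{3/2}2^{n/2}$, but it is compensated by the auxiliary estimate $\Vert(X_n-X_n^-)1_{L_n}\Vert_p\le Cn^{3/2}2^{-n(3-\beta)/2}$ of Lemma~\ref{ls4.3}, producing a vanishing sequence $f_n$. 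This requires proving the results of Section~\ref{s4} first, and it forces the Gronwall step to be run simultaneously on $X_n$ and $X_n^-$ (the quantity $\varphi_{n,p}=\varphi_{n,p}^0+\varphi_{n,p}^-$), with an inequality containing both $\int\varphi_{n,p}$ and $\int\varphi_{n,p}^{1/2}$ and closed via the generalized Gronwall Lemma~\ref{ls5.1} rather than the standard one.
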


\begin{theorem}
\label{ts3.3}
The assumptions are the same as in Theorem~\ref{ts3.2}.
Fix $t\in[t_0,T]$, $x\in\IR^3$. Then, for any
$p\in[1,\infty)$
%
\begin{equation}
\label{s3.16} \lim_{n\to\infty} \mathop{\sup_{{t\in[t_0,T]}}}_{{x\in K(t)}}\bigl \|
\bigl(X_n(t,x)-X(t,x) \bigr) 1_{L_n(t)}\bigr \|_p =0,
\end{equation}
where for $t\in[0,T]$,
\[
K(t)= \bigl\{x\in\R^3\dvtx d(x,K)\le T-t\bigr\},
\]
and $d$ denotes the Euclidean distance.
\end{theorem}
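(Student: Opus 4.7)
The strategy is to compare $X_n$ with $X$ through the two auxiliary processes defined in \eqref{s3.8.2}--\eqref{s3.8.3}, by writing
\begin{align*}
X_n(t,x)-X(t,x) &= \bigl[X_n(t,x)-X_n^-(t,x)\bigr] + \bigl[X_n^-(t,x)-X(t,t_n,x)\bigr]\\
&\quad + \bigl[X(t,t_n,x)-X(t,x)\bigr],
\end{align*}
multiplying by $1_{L_n(t)}$, and bounding each bracket in $L^p(\Omega)$ uniformly in $(t,x)\in[t_0,T]\times K(t)$. The two outer brackets involve only integrals over the short window $[t_n,t]$ of length at most $2\cdot 2^{-n}T$; the middle bracket compares $X_n$ and $X$ on the common window $[0,t_n]$ and will be handled by a Gronwall argument.

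For the first outer bracket I bound each of the four constituent terms (stochastic convolution, pairings against $h$ and against $w^n$, convolution with $b$) using Hypothesis (B), the identity \eqref{fundamental}, the uniform $L^p$ moment bounds implicit in Theorem \ref{ts3.2}, and the localization estimates \eqref{s3.101}--\eqref{s3.14}. The $w^n$-pairing costs only a factor $Cn^{3/2}$, which is absorbed by a positive power of $|t-t_n|\le 2^{1-n}T$ extracted from the spatial integrals $\int G(t-s,dy)G(t-s,dz)|y-z|^{-\beta}$; this gives decay to $0$ uniform in $(t,x)$. The third bracket is treated by the same method without localization: since $X$ is jointly H\"older continuous in $(t,x)$ by \cite[Theorem 4.11]{dss} and no $w^n$ appears, one directly estimates in $L^p$ the four integrals on $[t_n,t]$.

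The central estimate is on the middle bracket. Set $\varphi_n(t):=\sup_{x\in K(t)}\|(X_n^-(t,x)-X(t,t_n,x))1_{L_n(t)}\|_p^p$. Subtracting \eqref{s3.8.2} from \eqref{s3.8.3} and splitting the $M$-integral of $(A+B)(X)$ into its $A$ and $B$ pieces, the contributions involving $A$, $D$ and $b$ yield, by Lipschitz continuity, \eqref{fundamental} and standard estimates, a bound of the form $C\int_0^t(t-s)^\gamma \varphi_n(s)\,ds$ for some $\gamma>0$, plus a feedback term in $\sup_y\|(X_n-X)(s,y)1_{L_n(s)}\|_p$ absorbed through the outer-bracket estimates. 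After this reduction, the genuinely new quantity is
\begin{equation*}
\bigl\langle G(t-\cdot,x-\ast)B(X_n(\cdot,\ast))1_{[0,t_n]}(\cdot),w^n\bigr\rangle_{\hac_t}-\int_0^{t_n}\!\!\int_{\IR^3}G(t-s,x-y)B(X(s,y))\,M(ds,dy),
\end{equation*}
which, by adding and subtracting $B(X)$ in the first term, splits into a Lipschitz piece $\langle G(B(X_n)-B(X))1_{[0,t_n]},w^n\rangle_{\hac_t}$ (controlled by $\|w^n 1_{L_n}\|_{\hac_t}$ times a quantity reinjected into $\varphi_n$) and a discrepancy $R_n(t,x)$ with the common integrand $B(X)$.

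The hard part will be the quantitative control of $R_n$. Expanding $w^n$ via \eqref{s3.3} and using $\dot W_j^n=2^nT^{-1}W_j(\Delta_i)$ on $\Delta_{i+1}$, the pairing with $w^n$ becomes a sum over blocks of $W_j(\Delta_i)\cdot 2^nT^{-1}\int_{\Delta_{i+1}}\langle G(t-s,x-\ast)B(X(s,\ast)),e_j\rangle_{\hac}\,ds$, to be compared with the Itô sum associated with the $M$-integral; the one-step lag in $w^n$ matches each increment with a coefficient on the adjacent interval, producing a discrete martingale difference to which Burkholder--Davis--Gundy can be applied. The genuine obstacle is that $G$ is only a measure in dimension three, so pointwise-in-space manipulation of $G(t-s,x-dy)$ is forbidden; my plan is to follow the change-of-variables trick of \cite{dss}, together with the auxiliary results of Section \ref{s4} and \eqref{fundamental}, to transfer time and space increments from $G$ onto $B(X)$, and then use the H\"older regularity of $X$ to convert these into a power of $2^{-n}$. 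Summed over the $O(2^n)$ blocks this yields $\sup_{(t,x)}\|R_n(t,x)1_{L_n(t)}\|_p\to 0$, and Gronwall's lemma applied to $\varphi_n$ closes \eqref{s3.16}.
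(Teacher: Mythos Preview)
Your three–term splitting and the treatment of the two outer brackets are fine and correspond to Lemmas~\ref{ls4.1} and~\ref{ls4.3} of the paper. The genuine gap is in the middle bracket, specifically in your ``Lipschitz piece''
\[
\bigl\langle G(t-\cdot,x-\ast)\,[B(X_n)-B(X)](\cdot,\ast)\,1_{[0,t_n]},\,w^n\bigr\rangle_{\hac_t}.
\]
You propose to control this by Cauchy--Schwarz against $\|w^n 1_{L_n}\|_{\hac_t}$ and reinject the other factor into $\varphi_n$ for a Gronwall argument. But on the \emph{full} interval $[0,t_n]$ one only has $\|w^n 1_{L_n}1_{[0,t_n]}\|_{\hac_t}\le Cn^{3/2}2^{n/2}$ (cf.\ \eqref{s3.101}); the favourable bound $Cn^{3/2}$ of \eqref{s3.14} is valid only on a single block of length $2^{-n}T$. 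Hence your Gronwall inequality would read $\varphi_n(t)\le \theta_n + C\,n^{3p/2}2^{np/2}\int_0^t\varphi_n(s)\,ds$, and the exponential factor $\exp(C n^{3p/2}2^{np/2}T)$ destroys any decay in $\theta_n$. The same obstruction prevents you from treating $R_n$ with integrand $B(X)$: since $X(s,\ast)$ is not $\mathcal{F}_{s_n}$-measurable, the $w^n$-pairing cannot be rewritten as a bona fide $M$-integral, and a direct block-by-block BDG comparison does not produce an $n$-independent constant.

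The paper avoids this by inserting the $\mathcal{F}_{s_n}$-adapted processes $X_n^-$ and $X^-:=X(\cdot,\cdot_n,\ast)$ in the $w^n$-pairing \emph{before} separating the Lipschitz piece. Because $B(X_n^-(s,\ast))$ and $B(X^-(s,\ast))$ are $\mathcal{F}_{s_n}$-measurable, the identity
\[
\langle G\,Z\,1_{[0,t_n]},w^n\rangle_{\hac_t}=\int_0^t\!\!\int_{\IR^3}(\pi_n\circ\tau_n)\bigl[G\,Z\bigr](s,y)\,M(ds,dy)
\]
holds for $Z=B(X_n^-)-B(X^-)$, with $\pi_n\circ\tau_n$ uniformly bounded on $\hac_t$. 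This pairs with the corresponding $M$-integral to form the contraction $\Upsilon_t=\pi_n\circ\tau_n-I_{\hac_t}$ (the paper's $U_n^5+U_n^7$), giving an $n$-independent Gronwall constant. The residual cost of replacing $X_n,X$ by $X_n^-,X^-$ is paid by \eqref{s4.19} and \eqref{s4.4}, which do beat the $n^{3/2}2^{n/2}$ factor (the paper's $U_n^4$, $U_n^8$). Finally, the analogue of your $R_n$---but with integrand $B(X^-)$ rather than $B(X)$---is the paper's $U_n^6$; its hardest piece $U_n^{6,3}=\int(\pi_n-I)[GB(X^-)]\,dM$ is handled not by a quantitative block estimate but by monotone convergence of $\|(\pi_n-I)[\cdots]\|_{\hac_t}$ to zero and Dini's theorem to upgrade pointwise to uniform convergence in $(t,x)$. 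Your sketch for $R_n$ would need both the adaptedness fix and this compactness argument to go through.
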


The proof of Theorem~\ref{ts3.2} is carried out through two steps.
First, we shall consider $t=\bar t$ and obtain \eqref{s3.15},
uniformly in $t\in[t_0,T]$. Using this, we will consider $x=\bar x$
and establish \eqref{s3.15}, uniformly in $x\in K$. We devote the next
two subsections to the proof of these results.

\subsection{Increments in space}
\label{ss3.1}

Throughout this section, we fix $t_0\in\,]0,T[$ and a compact set
$K\subset\IR^3$. The objective is to prove the following proposition.
%
\begin{proposition}
\label{pss3.1.1}
Suppose that Hypothesis~\textup{\ref{HypB}} holds. Fix $t\in[t_0,T]$ and
$x,\bar{x}\in K$. Then, for any $p\in[1,\infty)$ and $\rho\in\,
]0,\frac{2-\beta}{2} [$, there exists a finite constant $C$ such that
%
\begin{equation}
\label{s3.17} \sup_{n\ge0} \sup_{t\in[t_0,T]}\bigl \|
\bigl(X_n(t,x)-X_n(t,\bar{x}) \bigr) 1_{L_n(t)}
\bigr \|_p \le C |x-\bar{x}|^{\rho}.
\end{equation}
\end{proposition}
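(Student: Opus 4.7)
The plan is to decompose $X_n(t,x)-X_n(t,\bar x)$ using the defining equation \eqref{s3.6} into four contributions and estimate each separately in $L^p(\Omega)$ on the localization set $L_n(t)$, ultimately arriving at a Volterra-type integral inequality of the form
\baln
g_n(t) \le C|x-\bar x|^{p\rho} + C\int_0^t (t-s)^{-\gamma} g_n(s)\,ds,
\ealn
where $g_n(t):=\|(X_n(t,x)-X_n(t,\bar x))\mathbf 1_{L_n(t)}\|_p^p$ and $\gamma\in(0,1)$, from which Gronwall's lemma yields \eqref{s3.17}. Since $L_n(s)\supset L_n(t)$ for $s\le t$, the inequality closes on itself.

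For the It\^o term (integrand $A(X_n)$), I would apply Burkholder's inequality via the isometric representation of the stochastic integral in \eqref{s1.7} to bound its $p$-th moment by a power of $\int_0^t\|[G(t-s,x-\ast)-G(t-s,\bar x-\ast)]A(X_n(s,\ast))\|_{\mathcal H}^2 ds$. Because $G(t-s,\cdot)$ is a measure and not a function, one cannot subtract its values pointwise; instead I would apply \eqref{fundamental} and then the change of variable $y\mapsto y+(\bar x-x)$ on the sphere, which yields the identity (as signed measures on $y$)
\baln
G(t-s,\bar x-dy)\,A(X_n(s,y)) = G(t-s,x-dy)\,A\bigl(X_n(s,y+(\bar x-x))\bigr).
\ealn
This is the crucial trick announced in the Introduction: the difference in the measure positions is converted into a spatial shift inside $A(X_n)$. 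Invoking the Lipschitz continuity of $A$ together with the translation invariance of moments stated in Remark \ref{rs3.1}, the integrand is controlled by $\|X_n(s,y)-X_n(s,y+(\bar x-x))\|_p$, which by Remark \ref{rs3.1} is independent of $y$. A standard Fourier computation bounds $\|G(t-s,x-\ast)-G(t-s,\bar x-\ast)\|_{\mathcal H}^2$ by $C(t-s)|x-\bar x|^{2-\beta}$, giving both the remainder $|x-\bar x|^{p\rho}$ (by splitting the integrand $A(X_n(s,y))$ as a deterministic part plus an increment) and the Volterra kernel. The deterministic $D$-term is treated by the same strategy using Cauchy--Schwarz in $\mathcal H_t$ against the fixed $h\in\mathcal H_T$, and the drift $I_b$ is trivially controlled by Minkowski's inequality using the finiteness of $\int_0^T\int_{\mathbb R^3}G(s,dy)\,ds$.

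The main obstacle is the $B$-term, $\langle[G(t-\cdot,x-\ast)-G(t-\cdot,\bar x-\ast)]B(X_n),w^n\rangle_{\mathcal H_t}$, because a direct Cauchy--Schwarz in $\mathcal H_t$ introduces the factor $\|w^n\mathbf 1_{L_n(t)}\|_{\mathcal H_t}$ which is of order $n^{1/2}2^{n/2}$ and destroys any hope of a bound uniform in $n$. My plan is to split $[0,t]=\bigcup_i \Delta_i$ into dyadic intervals, exploit that $w^n$ is piecewise constant on each $\Delta_i$, and on each such piece apply Cauchy--Schwarz in $\mathcal H$ using the sharper local estimate \eqref{s3.14} (which is independent of $n$ up to polynomial factors), before summing over $i$. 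On each dyadic piece one again applies the change-of-variable trick above to the $\mathcal H$-norm of the integrand, producing a contribution of the form $(t-s)^{2-\beta}g_n(s)$. A careful accounting of the dyadic factors together with the Lipschitz bound on $B$ makes the powers of $2^{n/2}$ cancel against the $2^{-n/2}$ arising from the local size of the Brownian increments on $L_n$; this is where the localization methodology of \cite{milletss2} is essential.

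Adding the four contributions yields the desired Volterra inequality for $g_n(t)$, and applying a Gronwall lemma adapted to weakly singular kernels (with exponent $\gamma<1$ arising from $\rho<(2-\beta)/2$) produces a bound on $g_n(t)$ by $C|x-\bar x|^{p\rho}$ with $C$ independent of $n$ and uniform in $t\in[t_0,T]$. The monotonicity of $L_n(t)$ in $t$ guarantees that the bound survives taking the supremum over $t$, completing the proof of \eqref{s3.17}.
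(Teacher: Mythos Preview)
Your overall architecture (decompose via \eqref{s3.6}, estimate each piece, close by Gronwall) matches the paper, but two of the pieces contain genuine gaps that the paper's proof handles quite differently.

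\textbf{The change-of-variable identity is not correct as written.} The measure $G(t-s,\bar x-dy)\,A(X_n(s,y))$ is the \emph{translate} by $\bar x-x$ of $G(t-s,x-dy)\,A(X_n(s,y+(\bar x-x)))$; the two are not equal as measures in $y$. Translation invariance of the $\mathcal H$-norm lets you equate their norms, but you need the $\mathcal H$-norm of their \emph{difference}, and you cannot simply subtract after shifting only one summand. The paper resolves this by expanding $\Vert[G(t-s,x-\ast)-G(t-s,\bar x-\ast)]Z_n(s,\ast)\Vert_{\mathcal H}^2$ via \eqref{fundamental} as a double integral in $(u,v)$ with kernel $f(u-v)=|u-v|^{-\beta}$, changing variables in $u$ and $v$ separately, and collecting four terms $J_1^t,\dots,J_4^t$ involving $f$, $Df$ and $D^2f$ (Lemma~\ref{lss3.1.1}). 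This produces the mixed estimate $|x-\bar x|^{\alpha_2 p/2}+\int_0^t\varphi+|x-\bar x|^{\alpha_1 p/2}\int_0^t\sqrt{\varphi}$, not a simple Volterra kernel; the square-root term forces the nonlinear Gronwall inequality of Lemma~\ref{ls5.1}, which your plan does not anticipate.

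\textbf{The $w^n$-term cannot be controlled by dyadic Cauchy--Schwarz.} Your splitting into $\Delta_i$ and applying \eqref{s3.14} on each piece gives $\sum_i n^{3/2}\Vert\text{integrand}\,1_{\Delta_i}\Vert_{\mathcal H_T}$, and the discrete Cauchy--Schwarz over $2^n$ pieces reintroduces exactly the factor $2^{n/2}$ you are trying to avoid; nothing cancels. The paper's mechanism is entirely different: one writes $B(X_n)=B(X_n^-)+[B(X_n)-B(X_n^-)]$, where $X_n^-$ is defined in \eqref{s3.8.2}. Because $X_n^-(s,\ast)$ is $\mathcal F_{s_n}$-measurable, the pairing $\langle G(t-\cdot,x-\ast)B(X_n^-(\cdot,\ast)),w^n\rangle_{\mathcal H_t}$ equals a genuine \emph{stochastic} integral against $M$ of the image under the bounded operator $\pi_n\circ\tau_n$ (see \eqref{s3.34} and the paragraph after it), and Burkholder applies with no $2^{n/2}$ factor. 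The remainder $B(X_n)-B(X_n^-)$ is estimated by brute Cauchy--Schwarz using \eqref{s3.101}, but the blow-up $n^{3/2}2^{n/2}$ is killed by the a~priori smallness \eqref{s4.19}, namely $\Vert(X_n-X_n^-)1_{L_n}\Vert_p\le Cn^{3/2}2^{-n(3-\beta)/2}$; this is where the localization truly pays off. Note that this forces the Gronwall argument to be run on the \emph{coupled} quantity $\varphi_{n,p}=\varphi_{n,p}^0+\varphi_{n,p}^-$ involving both $X_n$ and $X_n^-$ (Lemma~\ref{lss3.1.2} and Proposition~\ref{pss3.1.2}), a feature absent from your plan.
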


In the next lemma, we give an abstract result that will be used
throughout the proofs. We start by introducing some notation.

For a function $f\dvtx \R^3 \to\R$, we set
\begin{eqnarray*}
Df(u,x) &=&f(u+x)-f(u),
\\
\bar{D}^2f(u,x,y) &=&f(u+x+y)-f(u+x)-f(u+y)+f(u),
\\
D^2f(u,x) &=&\bar{D}^2f(u-x,x,x) =f(u-x)-2f(u)+f(u+x).
\end{eqnarray*}

\begin{lemma}
\label{lss3.1.1}
Consider a sequence of predictable stochastic processes $\{Z_n(t,x),
(t,x)\in[0,T]\times\R^3\}$, $n\in\mathbb{N}$, such that, for any
$p\in[2,\infty[$,
%
\begin{equation}
\label{s3.21} \sup_n \sup_{(t,x)\in[0,T]\times\R^3} \E
\bigl(\bigl\llvert Z_n(t,x)\bigr\rrvert ^p \bigr)< C,
\end{equation}
for some finite constant $C$.
For any $t\in[0,T]$, $x,\bar x\in\R^3$, we define
\[
I_n(t,x,\bar{x}) := \int_0^t
\mathrm{d}s \bigl \Vert Z_n(s,\ast) \bigl[G(t-s,x-\ast )-G(t-s,\bar{x}-\ast)
\bigr]\bigr  \Vert^2_{\mathcal{H}}.
\]\eject\noindent
Then, for any $p\in[2,\infty[$,
%
\begin{eqnarray}
\label{s3.220} &&\E \bigl( \bigl |I_n(t,x,\bar{x}) \bigr |^{{\trup{p}{2}}} \bigr)
\nonumber
\\
&&\quad \le C \biggl\{|x-\bar{x}|^{\alpha_2 {\trup{p}{2}}}+\int_0^t
\mathrm{d}s \Bigl[\sup_{y\in\R^3} \E \bigl( \bigl |Z_n(s,x-y)-Z_n(s,
\bar {x}-y)\bigr  |^p \bigr) \Bigr]
\\
&&\phantom{\quad \le C \biggl\{} {} +|x-\bar{x}|^{\alpha_1 {\trup{p}{2}}}\int
_0^t \mathrm{d}s \Bigl[\sup
_{y\in\R^3} \E \bigl( \bigl |Z_n(s,x-y)-Z_n(s,\bar
{x}-y)\bigr |^p \bigr) \Bigr]^{\trup{1}{2}} \biggr\},
\nonumber
\end{eqnarray}
where $\alpha_1 \in\,]0,(2-\beta)\wedge1[ $ and $\alpha_2 \in\,]0,(2-\beta)[ $.
\end{lemma}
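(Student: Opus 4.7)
The key is to apply \eqref{fundamental} to the signed measure
$\varphi_s(dy):=Z_n(s,y)[G(t-s,x-dy)-G(t-s,\bar x-dy)]$, so that the integrand of $I_n$ is exactly $\|\varphi_s\|_{\hac}^2=C\int\varphi_s(dy_1)\varphi_s(dy_2)|y_1-y_2|^{-\beta}$. Expanding the product and, in each of the four resulting terms, performing the change of variable $z_i=x-y_i$ or $z_i=\bar x-y_i$ so that every measure factor becomes $G(t-s,dz_i)$, and then adding and subtracting the cross term $Z_n(s,x-z_1)Z_n(s,\bar x-z_2)|z_1-z_2|^{-\beta}$ and its symmetric partner, regroups these four pieces as $T_1(s)+T_2(s)+T_3(s)$ with
\begin{align*}
T_1(s)&=\int G(t-s,dz_1)\,G(t-s,dz_2)\,\tilde Z(s,z_1)\tilde Z(s,z_2)\,|z_1-z_2|^{-\beta},\\
T_2(s)&=\int G(t-s,dz_1)\,G(t-s,dz_2)\,Z_n(s,x-z_1)Z_n(s,\bar x-z_2)\bigl[|z_1-z_2|^{-\beta}-|z_1-z_2-h|^{-\beta}\bigr],
\end{align*}
$h:=x-\bar x$, $\tilde Z(s,z):=Z_n(s,x-z)-Z_n(s,\bar x-z)$, and $T_3$ the symmetric analogue with the roles of $x$ and $\bar x$ interchanged. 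Minkowski's inequality in time applied to $I_n=\int_0^t(T_1+T_2+T_3)ds$ reduces the task to controlling $\E|T_j(s)|^{p/2}$ and integrating.

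For $T_1$, Cauchy--Schwarz in $(z_1,z_2)$ together with symmetry gives $T_1(s)\le\int G(t-s,dz_1)G(t-s,dz_2)|\tilde Z(s,z_1)|^2|z_1-z_2|^{-\beta}$; raising to the power $p/2$ and applying Jensen's inequality with respect to the finite measure $G(t-s,dz_1)[\int G(t-s,dz_2)|z_1-z_2|^{-\beta}]$, whose total mass $C\|G(t-s,\cdot)\|_{\hac}^2$ is uniformly bounded for $s\in[0,T]$, and then taking expectation, yields $\E[|T_1(s)|^{p/2}]\le C\sup_{y\in\R^3}\E(|\tilde Z(s,y)|^p)$, producing the second summand of \eqref{s3.220} after integration in $s$.

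For $T_2$ (and symmetrically $T_3$) I split $Z_n(s,x-z_1)=Z_n(s,\bar x-z_1)+\tilde Z(s,z_1)$, writing $T_2=T_2^{(1)}+T_2^{(2)}$. In the bounded piece $T_2^{(1)}$, both measure factors coincide with $\mu:=Z_n(s,\bar x-\cdot)G(t-s,\cdot)$, and Parseval gives
$$T_2^{(1)}(s)=C\int|\tf\mu(\xi)|^2\bigl(1-\cos(2\pi\xi\cdot h)\bigr)|\xi|^{-(3-\beta)}d\xi.$$
The elementary bound $1-\cos\theta\le C|\theta|^{\alpha_2}$, valid for every $\alpha_2\in\,]0,2]$, produces $T_2^{(1)}(s)\le C|h|^{\alpha_2}\int|\tf\mu|^2|\xi|^{-(3-\beta-\alpha_2)}d\xi$; since $\alpha_2<2-\beta$ implies $\beta+\alpha_2\in(0,2)$, Riesz--Fourier inversion identifies this with $C\int G(t-s,dz_1)G(t-s,dz_2)Z_n(s,\bar x-z_1)Z_n(s,\bar x-z_2)|z_1-z_2|^{-(\beta+\alpha_2)}$, to which the same Cauchy--Schwarz + Jensen scheme applies, the normalizing constant $C\int G(t-s,dz_1)G(t-s,dz_2)|z_1-z_2|^{-(\beta+\alpha_2)}$ being uniformly bounded in $s$ precisely because $\alpha_2<2-\beta$. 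Combined with \eqref{s3.21} this yields $\E[|T_2^{(1)}(s)|^{p/2}]\le C|h|^{\alpha_2 p/2}$, i.e.\ the first summand of \eqref{s3.220}. In the increment piece $T_2^{(2)}$ the measures $\nu_1:=\tilde Z(s,\cdot)G(t-s,\cdot)$ and $\nu_2:=Z_n(s,\bar x-\cdot)G(t-s,\cdot)$ are distinct, so Parseval followed by Cauchy--Schwarz in Fourier forces the unsquared inequality $|1-e^{-2\pi i\xi\cdot h}|\le C(|h||\xi|)^{\alpha_1}$, restricting $\alpha_1\in\,]0,1]$; weight integrability then also forces $\alpha_1<2-\beta$. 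The resulting bound
$$|T_2^{(2)}(s)|\le C|h|^{\alpha_1}\Bigl(\int|\tf\nu_1(\xi)|^2|\xi|^{-(3-\beta-\alpha_1)}d\xi\Bigr)^{1/2}\Bigl(\int|\tf\nu_2(\xi)|^2|\xi|^{-(3-\beta-\alpha_1)}d\xi\Bigr)^{1/2},$$
a Cauchy--Schwarz in $\Omega$, and the Cauchy--Schwarz + Jensen scheme applied separately to each factor (yielding $[\sup_y\E|\tilde Z(s,y)|^p]^{1/2}$ for $\nu_1$ and a constant for $\nu_2$), produce $\E[|T_2^{(2)}(s)|^{p/2}]\le C|h|^{\alpha_1 p/2}[\sup_y\E|\tilde Z(s,y)|^p]^{1/2}$, i.e.\ the third summand of \eqref{s3.220}.

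\textbf{Main obstacle.} The delicate point is the simultaneous use of two different quantitative bounds on $|1-e^{i\theta}|$: the squared one, $(1-\cos\theta)\le C|\theta|^{\alpha_2}$ for $\alpha_2\le 2$, which is usable precisely when both measure factors in Parseval are identical (the situation for $T_2^{(1)}$, where no Cauchy--Schwarz split is needed), and the unsquared one, $|1-e^{i\theta}|\le C|\theta|^{\alpha_1}$ for $\alpha_1\le 1$, which is unavoidable once Cauchy--Schwarz in Fourier is invoked to disentangle $\tilde Z$ from $Z_n$ in $T_2^{(2)}$. Matching these two ranges with the integrability range $\alpha<2-\beta$ of $|\tf G(t-s,\cdot)|^2|\xi|^{\alpha-(3-\beta)}$ is exactly what pins down the admissible exponents $\alpha_1\in\,]0,(2-\beta)\wedge 1[$ and $\alpha_2\in\,]0,2-\beta[$ in the conclusion.
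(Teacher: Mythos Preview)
Your argument is correct but organizes the algebra differently from the paper. Both proofs start from \eqref{fundamental} and transfer increments of $G$ to increments of $Z_n$ via a change of variables. The paper (following \cite{dss}, pp.~19--20) decomposes the integrand into four terms $J_1,\dots,J_4$: the bounded--bounded piece $J_4$ carries the \emph{second-order} difference $D^2f(v-u,x-\bar x)$ of the Riesz kernel and is controlled by the real-space estimate $\mu_4(x,\bar x)\le C|x-\bar x|^{\alpha_2}$ from \cite[Lemma~6.2]{dss}; the increment--bounded pieces $J_2,J_3$ carry the first-order difference $Df$ and are controlled by \cite[Lemma~6.1]{dss}. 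Your decomposition, by contrast, never produces a second-order difference of $f$: your bounded--bounded piece $T_2^{(1)}$ only carries $f(z_1-z_2)-f(z_1-z_2-h)$, but because the two measure factors in it are \emph{identical}, Parseval yields $|\tf\mu(\xi)|^2(1-\cos 2\pi\xi\cdot h)$, and it is the decay $1-\cos\theta\le C|\theta|^{\alpha_2}$ for any $\alpha_2\le 2$ that recovers the full range $\alpha_2<2-\beta$. Your Fourier-side argument is thus a self-contained replacement for the real-space Lemmas~6.1--6.2 of \cite{dss} that the paper invokes as black boxes; the trade-off is that you must work out the bilinear/shifted Parseval identities explicitly. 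One minor slip: the step you call ``Minkowski's inequality in time'' should really be H\"older's inequality (so that the exponent $p/2$ passes directly inside the $ds$-integral and you recover exactly the form of \eqref{s3.220}); with Minkowski you would need an additional Jensen step to convert the exponents, which is harmless but worth noting.
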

\begin{pf}
First, we notice that $I_n(t,x,\bar x)$ is the second order moment of
the stochastic integral
\[
\int_0^t \int_{\R^3}
Z_n(s,y)\bigl[G(t-s,x-y)-G(t-s,\bar x-y)\bigr] M(\mathrm{d}s,
\mathrm{d}y).
\]

We write $I_n(t,x,\bar x)$ using \eqref{fundamental}. This yields
\begin{eqnarray*}
I_n(t,x,\bar x)&=& C\int_0^t
\mathrm{d}s \int_{\R^3} \int_{\R^3}
Z_n(s,u)Z_n(s,v) \bigl[G(t-s,x-\mathrm{d}u)- G(t-s,\bar
x-\mathrm{d}u)\bigr]
\\
&&{} \times\bigl[G(t-s,x-\mathrm{d}v)-G(t-s,\bar x-\mathrm{d}v)
\bigr]|u-v|^{-\beta}.
\end{eqnarray*}
Then, as in \cite{dss}, pages 19--20, we see that, by decomposing this
expression into the sum of four integrals, by applying a change of
variables and rearranging terms, we have
\[
I_n(t,x,\bar x)= C\sum_{i=1}^4
J_i^t(x,\bar{x}),
\]
where, for $i=1,\ldots,4$,
\[
J_i^t(x,\bar{x})=\int_0^t
\mathrm{d}s \int_{\R^3}\int_{\R^3} G(s,
\mathrm{d}u)G(s,\mathrm{d}v) h_i(x,\bar{x};t,s,u,v)
\]
with
\begin{eqnarray*}
h_1(x,\bar{x} ;t,s,u,v) &=&f(\bar{x}-x+v-u) \bigl[Z_n(t-s,x-u)-Z_n(t-s,
\bar {x}-u)\bigr]
\\
&&{} \times\bigl[Z_n(t-s,x-v)-Z_n(t-s,\bar{x}-v)\bigr],
\\
h_2(x,\bar{x} ;t,s,u,v) &=&Df(v-u, x-\bar{x}) Z_n(t-s,x-u)
\\
&&{} \times\bigl[Z_n(t-s,x-v)-Z_n(t-s,\bar{x}-v)\bigr],
\\
h_3(x,\bar{x} ;t,s,u,v) &=&Df(v-u,\bar{x}-x) Z_n(t-s,
\bar{x}-v)
\\
&&{} \times\bigl[Z_n(t-s,x-u)-Z_n(t-s,\bar{x}-u)\bigr],
\\
h_4(x,\bar{x} ;t,s,u,v) &=&-D^2f(v-u,x-\bar{x})
Z_n(t-s,x-u)Z_n(t-s,x-v).
\end{eqnarray*}
Fix $p\in[2,\infty[$. It holds that
%
\begin{equation}
\label{s3.23} \E\bigl(\bigl |I_n(t,x,\bar{x})\bigr |^{{\trup{p}{2}}}\bigr) \le C
\sum_{i=1}^4 \E \bigl(\bigl |J_i^t(x,
\bar{x}) \bigr |^{{\trup{p}{2}}}\bigr).
\end{equation}
%
The next purpose is to obtain estimates for each term on the right
hand-side of \eqref{s3.23}. Let
\[
\mu_1(x,\bar{x})= \sup_{s\in[0,T]}\int
_{\R^3}\int_{\R^3} G(s,\mathrm{d}u)G(s,
\mathrm{d}v)f(\bar{x}-x+v-u).
\]
We recall that the inverse Fourier transform of $f(x)=|x|^\beta$ is
given by $\mu(\mathrm{d}\xi)=|\xi|^{-(3-\beta)}\, \mathrm{d}\xi$, and that $\tf
G(t,\ast)(\xi)=\frac{\sin(2\uppi t|\xi|)}{2\uppi|\xi|}$. Hence,
\begin{eqnarray*}
\int_{\R^3}\int_{\R^3} G(s,
\mathrm{d}u)G(s,\mathrm{d}v)f(\bar{x}-x+v-u)&\le&\int_{\R^3}
\bigl \vert\tf G(s,\ast) (\xi)\bigr \vert^2 \mu(\mathrm{d}\xi)
\\
&=&\int_{\R^3} \frac{\sin^2(2\uppi s|\xi|)}{4\uppi^2|\xi|^{5-\beta
}}\,\mathrm{d}\xi.
\end{eqnarray*}
Consequently, for any $\beta\in\,]0,2[$, $\sup_{x,\bar{x}}\mu
_1(x,\bar{x})<\infty$
(see \cite{dss} for a similar result).

Hence using firstly H\"older's inequality and then Cauchy--Schwarz's
inequality, we see that
%
\begin{eqnarray}
\label{s3.24} &&\E \bigl(\bigl |J_1^t(x,
\bar{x})\bigr |^{{\trup{p}{2}}} \bigr)
\nonumber
\\
&&\quad \le \biggl(\int_0^t \mathrm{d}s\int
_{\R^3}\int_{\R^3} G(s,\mathrm{d}u)G(s,
\mathrm{d}v) f(\bar{x}-x+v-u) \biggr)^{(\trup{p}{2})-1}
\nonumber
\\
&&\qquad{} \times\int_0^t \mathrm{d}s\int
_{\R^3}\int_{\R^3} G(s,\mathrm{d}u)G(s,
\mathrm{d}v)f(\bar {x}-x+v-u)
\nonumber
\\
&&\qquad{} \times \E \bigl( \bigl | \bigl[Z_n(t-s,x-u)-Z_n(t-s,
\bar{x}-u)\bigr]
\nonumber
\\[-8pt]
\\[-8pt]
&&\qquad{}\times\bigl[Z_n(t-s,x-v)-Z_n(t-s,\bar{x}-v)
\bigr] \bigr |^{{\trup{p}{2}}} \bigr)\nonumber
\\
&&\quad \le C \sup_{x,\bar x} \mu_1(x,\bar
x)^{\trup{p}{2}}
\nonumber
\\
&&\qquad{} \times\int_0^t \mathrm{d}s \sup
_{y\in\R^3} \E \bigl( \bigl | Z_n(t-s,x-y)-Z_n(t-s,
\bar{x}-y) \bigr |^p \bigr)
\nonumber
\\
&&\quad \le C \int_0^t \mathrm{d}s \sup
_{y\in\R^3} \E \bigl( \bigl | Z_n(s,x-y)-Z_n(s,
\bar{x}-y) \bigr |^p \bigr).
\nonumber
\end{eqnarray}
Set
%
\begin{equation}
\label{s3.241} \mu_2(x,\bar{x})= \sup_{s\in[0,T]}\int
_{\R^3}\int_{\R^3} G(s,\mathrm{d}u)G(s,
\mathrm{d}v)\bigl |Df(v-u,x-\bar{x})\bigr |.
\end{equation}
The following property holds: there exists a positive finite constant
$C$ such that
\[
\mu_2(x,\bar{x})\le C|x-\bar{x}|^{\alpha_1}, \qquad
\alpha_1 \in\,\bigl ]0,(2-\beta)\wedge1\bigr [.
\]
Indeed, this follows from a slight modification of the proof of
\cite{dss}, Lemma~6.1.

Using H\"older's and Cauchy--Schwarz's inequalities, along with \eqref
{s3.21}, we have
%
\begin{eqnarray}
\label{s3.25} \E\bigl(\bigl |J_2^t(x,\bar{x})\bigr |^{{\trup{p}{2}}}
\bigr) &\le& C \biggl(\int_0^t \mathrm{d}s\int
_{\R^3}\int_{\R^3} G(s,\mathrm{d}u)G(s,
\mathrm{d}v)\bigl |Df(v-u,x-\bar{x})\bigr | \biggr)^{(\trup{p}{2})-1}
\nonumber
\\
&&{} \times\int_0^t \mathrm{d}s\int
_{\R^3}\int_{\R^3} G(s,\mathrm{d}u)G(s,
\mathrm{d}v)\bigl |Df(v-u,x-\bar{x})\bigr |
\nonumber
\\[-8pt]
\\[-8pt]
&&{} \times\E \bigl( \bigl | Z_n(t-s,x-u)\bigl[Z_n(t-s,x-v)-Z_n(t-s,
\bar {x}-v)\bigr] \bigr |^{{\trup{p}{2}}} \bigr)\qquad\quad
\nonumber
\\
&\le& C |x-\bar{x}|^{{\alpha_1}{\trup{p}{2}}}\int_0^t
\mathrm{d}s \Bigl[ \sup_{y\in\R^3} \E \bigl( \bigl | Z_n(s,x-y)-Z_n(s,
\bar{x}-y) \bigr |^p \bigr) \Bigr]^{{\trup{1}{2}}},
\nonumber
\end{eqnarray}
with $\alpha_1 \in\,]0,(2-\beta)\wedge1[$.

Similarly,
%
\begin{equation}
\label{s3.26} \E\bigl(\bigl |J_3^t(x,\bar{x})\bigr |^{{\trup{p}{2}}}
\bigr) \le C |x-\bar{x}|^{{\alpha
_1}{\trup{p}{2}}} \int_0^t
\mathrm{d}s \Bigl[\sup_{y\in\R^3} \E \bigl( \bigl | Z_n(s,x-y)-Z_n(s,
\bar{x}-y) \bigr |^p \bigr) \Bigr]^{{\trup{1}{2}}},\ \ \
\end{equation}
with $\alpha_1 \in\,]0,(2-\beta)\wedge1[$.

Let
\[
\mu_4(x,\bar{x})= \sup_{s\in[0,T]}\int
_{\R^3}\int_{\R^3} G(s,\mathrm{d}u)G(s,
\mathrm{d}v)\bigl |D^2f(v-u,x-\bar{x})\bigr |.
\]
Following the arguments of the proof of Lemma~6.2 in \cite{dss}, we
see that, for any $\alpha_2 \in\,]0,(2-\beta)[$,
\[
\mu_4(x,\bar{x})\le C|x-\bar{x}|^{\alpha_2}.
\]
Then, H\"older's and Cauchy--Schwarz's inequalities, along with \eqref
{s3.21}, imply
%
\begin{eqnarray}
\label{s3.27} \E\bigl(\bigl |J_4^t(x,\bar{x})\bigr |^{{\trup{p}{2}}}
\bigr) &\le& C \biggl(\int_0^t \mathrm{d}s\int
_{\R^3}\int_{\R^3} G(s,\mathrm{d}u)G(s,
\mathrm{d}v) \bigl |D^2f(v-u,x-\bar{x})\bigr | \biggr)^{(\trup{p}{2})-1}\quad
\nonumber
\\
&&{} \times\int_0^t \mathrm{d}s\int
_{\R^3}\int_{\R^3} G(s,\mathrm{d}u)G(s,
\mathrm{d}v)\bigl |D^2f(v-u,x-\bar{x})\bigr |
\nonumber
\\
&&{} \times\E \bigl( \bigl | Z_n(t-s,x-u)Z_n(t-s,\bar{x}-v)
\bigr |^{{\trup
{p}{2}}} \bigr)
\\
&\le& C |x-\bar{x}|^{{\alpha_2}\trup{p}{2}} \int_0^t
\mathrm{d}s \sup_{y\in
\R^3} \E \bigl( \bigl |Z_n(t-s,y)
\bigr |^p \bigr)
\nonumber
\\
&\le& C |x-\bar{x}|^{\trup{\alpha_2 p}{2} }.
\nonumber
\end{eqnarray}

From \eqref{s3.23}, \eqref{s3.24}, \eqref{s3.25}, \eqref{s3.26} and
\eqref{s3.27}, we obtain \eqref{s3.220}.
\end{pf}
%

For any $t\in[t_0,T]$, $x,\bar x\in K$, $p\in[1,\infty[$, we set
\begin{eqnarray*}
\varphi_{n,p}^0(t,x,\bar{x}) &=& \E \bigl( \bigl |
X_n(t,x)-X_n(t,\bar {x}) \bigr |^p
1_{L_n(t)} \bigr),
\\
\varphi_{n,p}^-(t,x,\bar{x}) &=& \E \bigl( \bigl | X_n^-(t,x)-X_n^-(t,
\bar{x}) \bigr |^p 1_{L_n(t)} \bigr),
\\
\varphi_{n,p}(t,x,\bar{x}) &=& \varphi_n^0(t,x,
\bar{x})+\varphi _n^-(t,x,\bar{x}).
\end{eqnarray*}

Proposition~\ref{pss3.1.1} is a consequence of the following assertion.
%
\begin{proposition}\label{pss3.1.2}
The hypotheses are the same as in Proposition~\ref{pss3.1.1}. Fix
$t\in[t_0,T]$, $x,\bar x\in K$. Then, for any $p\in[1,\infty[$,
$\rho\in\, ]0, \frac{2-\beta}{2} [$,
%
\begin{equation}
\label{s3.28} \sup_{n\ge0}\varphi_{n,p}(t,x,\bar{x})
\le C|x-\bar x|^{\rho p}.
\end{equation}
\end{proposition}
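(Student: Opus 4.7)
The plan is to establish a pair of integral inequalities for $\varphi_{n,p}^0(t,x,\bar x)$ and $\varphi_{n,p}^-(t,x,\bar x)$, uniform in $n$, and close them with a nonlinear Gronwall argument. To begin, decompose $X_n(t,x)-X_n(t,\bar x)$ (respectively $X_n^-(t,x)-X_n^-(t,\bar x)$) into four contributions corresponding to the four summands of \eqref{s3.6} (respectively \eqref{s3.8.2}), with $G(t-s,x-\ast)$ replaced throughout by $G(t-s,x-\ast)-G(t-s,\bar x-\ast)$.

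For the stochastic integral against $M$, writing it as an $\ell^2$-series as in \eqref{s1.7} and applying Burkholder's inequality, the $p$-th moment is dominated by the $L^{p/2}$-moment of the quantity $I_n(t,x,\bar x)$ from Lemma \ref{lss3.1.1} with $Z_n(s,y):=A(X_n(s,y))$. Hypothesis (B) on $A$ together with the uniform $L^p$-bound on $X_n$ from Theorem \ref{ts5.1} provides \eqref{s3.21}; moreover, Lipschitz continuity of $A$ combined with the stationarity in Remark \ref{rs3.1} replaces $\sup_y \E|A(X_n(s,x-y))-A(X_n(s,\bar x-y))|^p$ by a constant multiple of $\E|X_n(s,x)-X_n(s,\bar x)|^p$. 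Using $1_{L_n(t)}\le 1_{L_n(s)}$ for $s\le t$ to insert the localizing indicator inside the time integral, we obtain a contribution of the form
\[
C|x-\bar x|^{\alpha_2 p/2}+C\int_0^t \varphi_{n,p}^0(s,x,\bar x)\,ds+C|x-\bar x|^{\alpha_1 p/2}\int_0^t \varphi_{n,p}^0(s,x,\bar x)^{1/2}\,ds,
\]
with $\alpha_1\in\,]0,(2-\beta)\wedge 1[$ and $\alpha_2\in\,]0,2-\beta[$. The inner-product term with $h\in\mathcal{H}_T$ is handled identically via Cauchy-Schwarz in $\mathcal{H}_t$, and the convolution integral with $b$ is routine.

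The real obstacle is the term $\langle [G(t-\cdot,x-\ast)-G(t-\cdot,\bar x-\ast)]B(X_n(\cdot,\ast)), w^n\rangle_{\mathcal{H}_t}$. A direct Cauchy-Schwarz estimate over $[0,t]$ fails, because $\|w^n 1_{L_n(t)}\|_{\mathcal{H}_t}\le Cn^{3/2}2^{n/2}$ diverges with $n$. We therefore split the time integration at $t_n$: on $[0,t_n]$ the expression coincides with the corresponding piece appearing in $X_n^-(t,x)-X_n^-(t,\bar x)$, and after applying Lemma \ref{lss3.1.1} it is absorbed into the $\varphi_{n,p}^-$ component of the inequality; on the residual $[t_n,t]$, of length at most $2^{1-n}T$, inequality \eqref{s3.14} applied on each of the (at most two) dyadic subintervals involved yields an $\mathcal{H}_t$-norm of $w^n 1_{[t_n,t]}$ of order $n^{3/2}$ only, which is more than compensated by the $\mathcal{H}$-increment of $G$ integrated over an interval of length $O(2^{-n})$. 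Repeating the argument with the roles of $X_n$ and $X_n^-$ exchanged delivers the analogous bound on $\varphi_{n,p}^-$.

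Summing all estimates gives
\[
\varphi_{n,p}(t,x,\bar x)\le C|x-\bar x|^{\rho p}+C\int_0^t \varphi_{n,p}(s,x,\bar x)\,ds+C|x-\bar x|^{\rho p/2}\int_0^t \varphi_{n,p}(s,x,\bar x)^{1/2}\,ds
\]
uniformly in $n$, for any $\rho\in\,]0,(2-\beta)/2[$. A nonlinear Gronwall argument, after separating the polynomial $|x-\bar x|^{\rho p}$ factor, then delivers \eqref{s3.28}. The decisive difficulty throughout is controlling the $w^n$-contribution: the localization $1_{L_n}$ alone does not beat the $n$-growth of $\|w^n\|_{\mathcal{H}_t}$, and the splitting device tied to $X_n^-$ and the short final interval $[t_n,t]$ is what permits closure of the coupled system.
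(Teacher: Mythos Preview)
Your treatment of the $w^n$-term has a genuine gap. You propose to split the time integral at $t_n$: the piece on $[t_n,t]$ is indeed harmless, but your handling of the piece on $[0,t_n]$ does not close. You say this piece ``coincides with the corresponding piece appearing in $X_n^-(t,x)-X_n^-(t,\bar x)$'' and is therefore ``absorbed into the $\varphi_{n,p}^-$ component''. But the $w^n$-term in $X_n^-(t,x)$ is \emph{exactly} the pathwise integral
$\langle G(t-\cdot,x-\ast)B(X_n(\cdot,\ast))1_{[0,t_n]}, w^n\rangle_{\mathcal{H}_t}$
you are trying to bound. When you then turn to estimate $\varphi_{n,p}^-$ by ``repeating the argument with the roles exchanged'', this same term reappears unchanged; the splitting has produced a circular reference, not a Gronwall-able inequality. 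A direct Cauchy--Schwarz on $[0,t_n]$ still carries the divergent factor $n^{3p/2}2^{np/2}$, and on that bulk interval there is no compensating smallness: the integrand $B(X_n)$ is $O(1)$. Lemma~\ref{lss3.1.1} cannot be invoked on a pathwise $w^n$-integral without first paying for $\Vert w^n\Vert_{\mathcal{H}_{t_n}}$.

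The paper's device is different and is the missing idea. One splits the integrand rather than the time interval: $B(X_n)=B(X_n^-)+\hat B(X_n)$ with $\hat B(X_n)=B(X_n)-B(X_n^-)$. The $B(X_n^-)$-part is then \emph{rewritten as a genuine stochastic integral}: because $X_n^-(s,\ast)$ is $\mathcal{F}_{s_n}$-measurable and $w^n$ is built from one-step-delayed Brownian increments, one has
\[
\langle \Psi\,B(X_n^-), w^n\rangle_{\mathcal{H}_t}=\int_0^t\!\int_{\R^3}(\pi_n\circ\tau_n)\bigl[\Psi\,B(X_n^-)\bigr](s,y)\,M(ds,dy),
\]
where $\pi_n$ projects onto ${\rm span}\{1_{\Delta_i}\otimes e_j\}$ and $\tau_n$ shifts time by $2^{-n}$. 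Burkholder together with the uniform boundedness of $\pi_n\circ\tau_n$ now yields the Lemma~\ref{lss3.1.1} estimate with $Z_n=B(X_n^-)1_{L_n}$, producing the $\int_0^t\varphi_{n,p}^-\,ds$ and $|x-\bar x|^{\alpha_1 p/2}\int_0^t(\varphi_{n,p}^-)^{1/2}ds$ terms in \eqref{s3.29}. The remaining $\hat B(X_n)$-piece \emph{is} handled by crude Cauchy--Schwarz with the divergent $n^{3p/2}2^{np/2}$, but now the smallness $\E|\hat B(X_n)|^p\le Cn^{3p/2}2^{-np(3-\beta)/2}$ from \eqref{s4.19} compensates, giving the harmless $f_n\to 0$. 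Without the stochastic-integral rewriting, the blow-up on the bulk interval cannot be avoided and the coupled system does not close.
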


The proof of this proposition relies on the next lemma and a version of
Gronwall's lemma quoted in Lemma~\ref{ls5.1}.

%
\begin{lemma}\label{lss3.1.2}
We assume the same hypotheses as in Proposition~\ref{pss3.1.1}. For
any $n\ge1$, $t\in[t_0,T]$, $x,\bar x\in K$, $p\in[2,\infty[$,
there exists a finite constant $C$ (not depending on $n$) such that
%
\begin{eqnarray}
\label{s3.29} \varphi_{n,p}(t,x,\bar{x}) &\le& C
\biggl[f_n+|x-\bar x|^{\trup{\alpha
_2p}{2}} + \int_0^t
\mathrm{d}s \bigl(\varphi_{n,p}(s,x,\bar{x})\bigr)
\nonumber
\\[-8pt]
\\[-8pt]
&&\phantom{C
\biggl[}{}+   |x-\bar{x}|^{\alpha_1 \trup{p}{2}} \int_0^t
\mathrm{d}s \bigl( \bigl[\varphi_{n,p}^0(s,x,\bar{x})
\bigr]^{{\trup{1}{2}}}+ \bigl[\varphi_{n,p}^-(s,x,\bar{x})
\bigr]^{{\trup{1}{2}}} \bigr) \biggr],\quad\
\nonumber
\end{eqnarray}
where $(f_n, n\ge1)$ is a sequence of real numbers which converges to
zero as $n\to\infty$, $\alpha_1\in[0, (2-\beta\wedge1)[$, $\alpha
_2\in\,]0, 2-\beta[$.
\end{lemma}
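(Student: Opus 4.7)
The goal is a recursive estimate for $\varphi_{n,p}(t,x,\bar{x})=\varphi_{n,p}^0+\varphi_{n,p}^-$ that fits Gronwall's lemma. I would treat $\varphi_{n,p}^-$ and $\varphi_{n,p}^0$ separately and then add the two bounds, exploiting that $X_n^-$ only integrates over $[0,t_n]$, while $X_n$ on the extra window $[t_n,t]$ of length $\le 2\cdot 2^{-n}T$ will give rise to the small sequence $f_n$.

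\textbf{Step 1: bound $\varphi_{n,p}^-$.} Write $X_n^-(t,x)-X_n^-(t,\bar{x})$ using \eqref{s3.8.2} as a sum of four integral differences (driven by $A$, $B$, $D$, $b$). For the stochastic integral term in $A$, apply Burkholder's inequality together with the fact that $1_{L_n(t)}\le 1_{L_n(s)}$ for $s\le t$, so I may insert the indicator inside the integrand and work with the predictable process $Z_n(s,y):=A(X_n(s,y))1_{[0,t_n]}(s)1_{L_n(s)}$, which satisfies \eqref{s3.21} by Theorem \ref{ts5.1} and the linear growth of $A$. Lemma \ref{lss3.1.1} then produces a bound of the form $|x-\bar{x}|^{\alpha_2 p/2}$ plus the $\int_0^t\sup_y\E|Z_n(s,x-y)-Z_n(s,\bar{x}-y)|^p$ and the mixed term of \eqref{s3.220}; the Lipschitz property of $A$ combined with the translation invariance \eqref{s3.est} converts the inner $\sup_y$ into $\varphi_{n,p}^0(s,x,\bar{x})$ exactly.

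\textbf{Step 2: the two pathwise integrals and the drift.} For the $D$-term and $B$-term I apply Cauchy--Schwarz in $\mathcal{H}_t$, separating the $G$-kernel increment from the ``noise'': the $D$-piece picks up the deterministic factor $\|h\|_{\mathcal{H}_T}$, and the $B$-piece the random factor $\|w^n 1_{L_n(t)}1_{[0,t_n]}\|_{\mathcal{H}_t}$, which is controlled through \eqref{s3.101}--\eqref{s3.14} by summing over the dyadic pieces $\Delta_i$ contained in $[0,t_n]$. The remaining $\mathcal{H}_t$-norm-squared of $[G(t-\cdot,x-\ast)-G(t-\cdot,\bar{x}-\ast)]B(X_n(\cdot,\ast))1_{[0,t_n]}$ is precisely an $I_n$ of the shape treated by Lemma \ref{lss3.1.1} (with $Z_n=B(X_n)1_{[0,t_n]}$), which again collapses to $\varphi_{n,p}^0$ via Lipschitz continuity and \eqref{s3.est}. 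To balance the $n^{3/2}2^{n/2}$ from the $w^n$-norm we must use the sharper dyadic identity \eqref{s3.14} on each interval $\Delta_i\subset[0,t_n]$ before applying Cauchy--Schwarz, so that the growth is absorbed into the integral over $s$ and the final multiplicative constant stays $n$-free. The drift term involving $b$ is disposed of by H\"older's inequality, the convolution identity $\int G(t-s,dy)=(t-s)$, and Lipschitz continuity of $b$, which again yields an $\int_0^t\varphi_{n,p}^0(s,x,\bar{x})\,ds$ contribution.

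\textbf{Step 3: from $\varphi_{n,p}^-$ to $\varphi_{n,p}^0$.} Use
\begin{equation*}
X_n(t,x)-X_n(t,\bar{x})=[X_n^-(t,x)-X_n^-(t,\bar{x})]+[X_n(t,x)-X_n^-(t,x)]-[X_n(t,\bar{x})-X_n^-(t,\bar{x})].
\end{equation*}
The first summand gives $\varphi_{n,p}^-$; the two ``boundary'' summands involve only integrals on $[t_n,t]$. Estimating these yields a deterministic bound $f_n$ that vanishes as $n\to\infty$: the stochastic piece contributes through the $\mathcal{H}$-norm of $G$ over an interval of length $\le 2\cdot 2^{-n}T$, which tends to $0$; the $w^n$-piece is controlled using \eqref{s3.14} on the single dyadic block containing $[t_n,t]$, giving a factor $n^{3/2}$ multiplied by a vanishing $\mathcal{H}$-norm of the $G$-integrand; and the $h$- and $b$-pieces vanish by dominated convergence since $\|h\, 1_{[t_n,t]}\|_{\mathcal{H}_T}\to 0$ and $t-t_n\to 0$.

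\textbf{Step 4: assembly.} Adding the two bounds yields \eqref{s3.29}, where the $\int_0^t\varphi_{n,p}\,ds$ on the right dominates both $\int_0^t\varphi_{n,p}^0$ and $\int_0^t\varphi_{n,p}^-$ produced above. The anticipated main obstacle is Step 2: keeping the estimate uniform in $n$ for the $w^n$-driven pathwise integral, since naive Cauchy--Schwarz with \eqref{s3.101} gives a factor $n^{3/2}2^{n/2}$ that would ruin the bound; the resolution is to dyadically split the interval and apply \eqref{s3.14} together with Lemma \ref{lss3.1.1} before re-assembling, so that the $n$-factors cancel against the small $\mathcal{H}$-mass of the $G$-kernel on each $\Delta_i$ and only the desired recursive contribution survives.
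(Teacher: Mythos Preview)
Your overall architecture (separate $\varphi^0$ and $\varphi^-$, use Lemma~\ref{lss3.1.1} for each kernel difference, then Gronwall) matches the paper. The genuine gap is in Step~2, exactly where you yourself flagged the ``main obstacle'': your proposed fix does not work.

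Splitting $[0,t_n]$ into dyadic blocks and applying \eqref{s3.14} on each $\Delta_i$ still leaves you with roughly $2^n$ summands, each carrying a factor $n^{3/2}$ and an $\mathcal H_T$--norm on $\Delta_i$ of order $2^{-n/2}$; summing (or Cauchy--Schwarz in $i$) reproduces the same $n^{3/2}2^{n/2}$ you started from. There is no extra smallness of the $[G(t-s,x-\ast)-G(t-s,\bar x-\ast)]$ kernel on short time intervals that could cancel this: the $\mathcal H$--norm in Lemma~\ref{lss3.1.1} is controlled by spatial increments $|x-\bar x|$, not by the length of the time window.

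The paper's resolution is structural rather than analytic. It splits $B(X_n)=B(X_n^-)+[B(X_n)-B(X_n^-)]$, giving two terms $R_n^2$ and $R_n^3$. For $R_n^2$ the key point is that $X_n^-(s,\ast)$ is $\mathcal F_{s_n}$--measurable, so the pathwise integral against $w^n$ can be rewritten as a genuine It\^o integral against $M$ via the bounded operator $\pi_n\circ\tau_n$ (projection onto the span of $1_{\Delta_i}\otimes e_j$ composed with the time shift $\tau_n$ of \eqref{s3.34}); Burkholder then applies with \emph{no} $n$--dependent factor, and Lemma~\ref{lss3.1.1} yields the $\varphi_{n,p}^-$ contribution. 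For $R_n^3$ one does take the crude Cauchy--Schwarz bound $n^{3/2}2^{n/2}$, but now the remaining integrand $\hat B(X_n)=B(X_n)-B(X_n^-)$ is small: by \eqref{s4.19} one has $\sup_{t,x}\E(|\hat B(X_n)|^p1_{L_n})\le Cn^{3p/2}2^{-np(3-\beta)/2}$, and plugging this into each of the four $J_i$--type pieces of Lemma~\ref{lss3.1.1} produces $f_n=n^{3p}2^{-np[(3-\beta)/2-1/2]}\to 0$.

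Your Step~3 boundary argument essentially rederives \eqref{s4.19}; that part is fine. But without the $\pi_n\circ\tau_n$ rewriting (or an equivalent device exploiting the $\mathcal F_{s_n}$--measurability of $X_n^-$) you cannot get an $n$--uniform estimate for the $w^n$--term, and the recursion \eqref{s3.29} does not close.
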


We postpone the proof of this lemma to the end of this section.

\begin{pf*}{Proof of Proposition~\ref{pss3.1.2}}
Fix $t\in[t_0,T]$, $x,\bar x \in K$, $p\in[2,\infty[$.
From Lemma~\ref{lss3.1.2} along with Jensen's inequality, we have
\begin{eqnarray*}
\varphi_{n,p}(t,x,\bar{x})^2 &\le& C \biggl\{f_n^2+|x-
\bar {x}|^{\alpha_2 p} + \int_0^t \mathrm{d}s
\bigl(\varphi_{n,p}(s,x,\bar{x}) \bigr)^2
\\
&&\phantom{C\biggl\{} {}  + |x-\bar{x}|^{{\alpha_1}p} \int
_0^t \mathrm{d}s \bigl( \bigl[\varphi
_{n,p}^0(s,x,\bar{x}) \bigr]^{{\trup{1}{2}}}+ \bigl[
\varphi_{n,p}^-(s,x,\bar{x}) \bigr]^{{\trup{1}{2}}} \bigr)^2
\biggr\}
\\
&\le& C \biggl\{f_n^2 +|x-\bar{x}|^{\alpha_2 p} + \int
_0^t \mathrm{d}s \bigl(\varphi_{n,p}(s,x,
\bar{x}) \bigr)^2
\\
&&\phantom{C\biggl\{} {}  + |x-\bar{x}|^{{\alpha_1}p} \int
_0^t \mathrm{d}s \bigl(\varphi
_{n,p}(s,x,\bar{x}) \bigr) \biggr\}.
\end{eqnarray*}
Since the sequence $(f_n, n\ge1)$ is bounded, there exists a
constant $C_0$ satisfying
\[
\sup_n{f_n^2} \le
C_0t_0\le C_0t\le C\int
_0^t \mathrm{d}s \bigl[1+\bigl(\varphi
_{n,p}(s,x,\bar x)\bigr)^2\bigr]
\]
for any $t\in[t_0,T]$.
Thus, for some positive constant $C$,
\begin{eqnarray*}
1+\varphi_{n,p}(t,x,\bar{x})^2 &\le& C \biggl\{|x-
\bar{x}|^{\alpha_2 p} + \int_0^t \mathrm{d}s
\bigl[1+ \bigl(\varphi_{n,p}(s,x,\bar{x}) \bigr)^2 \bigr]
\\
&&\phantom{C\biggl\{} {}  + |x-\bar{x}|^{{\alpha_1}p} \int
_0^t \mathrm{d}s \bigl[1+ \varphi
_{n,p}(s,x,\bar{x})^2 \bigr]^{{\trup{1}{2}}} \biggr\}.
\end{eqnarray*}


We apply Lemma~\ref{ls5.1} in the following particular situation:
$u(t)= \varphi_{n,p}(t,x,\bar{x})^2 +1$,
$a= C|x-\bar{x}|^{\alpha_2 p}$, $b(s)\equiv C$, $k(s)\equiv C |x-\bar
{x}|^{\alpha_1 p}$,
$\bar p=\bar q=\frac{1}{2}$, $\alpha=0$, $\beta=T$.
This yields
\[
\varphi_{n,p}(t,x,\bar{x})^2 +1 \le C \bigl[|x-
\bar{x}|^{2\alpha_1
p }+|x-\bar{x}|^{\alpha_2 p} \bigr],
\]
which trivially implies\vspace*{-2pt}
%
\begin{equation}
\label{s3.300} \varphi_{n,p}(t,x,\bar{x}) \le C \bigl[|x-
\bar{x}|^{\alpha_1 p
}+|x-\bar{x}|^{\trup{\alpha_2 p}{2}} \bigr].
\end{equation}

We recall that $\alpha_1 \in\,]0,(2-\beta)\wedge1[$ and $\alpha_2
\in\,]0,(2-\beta)[$. Therefore, \eqref{s3.300} implies \eqref{s3.28}.
This ends the proof of Proposition~\ref{pss3.1.2}.
\end{pf*}


\begin{pf*}{Proof of Lemma~\ref{lss3.1.2}}
Fix $p\in[2,\infty[$. From \eqref{s3.6}, we have the following:\vspace*{-2pt}
\[
\varphi_{n,p}^0(t,x,\bar x):=\E \bigl(\bigl |X_n(t,x)-X_n(t,
\bar{x}) \bigr |^p 1_{L_n(t)} \bigr) \le C \sum
_{i=1}^6 R_n^i(t,x,\bar
x),
\]
with\vspace*{-2pt}
\begin{eqnarray*}
R_n^1(t,x,\bar x) &=&\E \biggl(\biggl\llvert \int
_0^t \int_{\R^3}
\bigl[G(t-s,x-y)-G(t-s,\bar {x}-y)\bigr] A\bigl(X_n(s,y)\bigr) M(
\mathrm{d}s,\mathrm{d}y) \biggr\rrvert ^p1_{L_n(t)} \biggr),
\\
R_n^2(t,x,\bar x) &=&\E {\bigl(} \bigl |\bigl\langle\bigl[G(t-
\cdot,x-\ast)-G(t-\cdot,\bar{x}-\ast )\bigr] B\bigl(X_n^-(\cdot,\ast)
\bigr), w^n\bigr\rangle_{\mathcal{H}_t} \bigr |^p1_{L_n(t)}
{\bigr)},
\\
R_n^3(t,x,\bar x) &=&\E \bigl(\bigl\llvert \bigl\langle
\bigl[G(t-\cdot,x-\ast)-G(t-\cdot,\bar {x}-\ast)\bigr] \bigl[B(X_n)-B
\bigl(X_n^-\bigr)\bigr] (\cdot,\ast),w^n \bigr
\rangle_{\mathcal{H}_t}\bigr\rrvert ^p1_{L_n(t)} \bigr),
\\
R_n^4(t,x,\bar x) &=&\E \bigl(\bigl\llvert \bigl\langle
\bigl[G(t-\cdot,x-\ast)-G(t-\cdot,\bar {x}-\ast)\bigr] D\bigl(X_n(
\cdot,\ast)\bigr),h \bigr\rangle_{\mathcal{H}_t}\bigr\rrvert ^p1_{L_n(t)}
\bigr),
\\
R_n^5(t,x,\bar x) &=&\E \biggl(\biggl\llvert \int
_0^t \int_{\R^3}
\bigl[G(t-s,x-\mathrm{d}y)-G(t-s,\bar{x}-\mathrm{d}y)\bigr] b\bigl(X_n(s,y)
\bigr) \,\mathrm{d}s \biggr\rrvert ^p 1_{L_n(t)} \biggr).
\end{eqnarray*}

Using Burkholder's inequality and then Plancherel's identity, we have\vspace*{-2pt}
%
\begin{eqnarray}
\label{s3.32} R_n^1(t,x,\bar{x})&=&\E \biggl(\biggl
\llvert \int_0^t \int_{\R^3}
\bigl[G(t-s,x-y)-G(t-s,\bar{x}-y) \bigr] A\bigl(X_n(s,y)\bigr)M(
\mathrm{d}s,\mathrm{d}y) \biggr\rrvert ^p1_{L_n(t)} \biggr)
\nonumber
\\
&=&\E \biggl(\biggl\llvert \sum_{j\in\mathbb{N}}\int
_0^t \bigl\langle \bigl[G(t-s,x-\ast)\nonumber
\\[-1pt]
&&\phantom{\E \biggl(\biggl\llvert\sum_{j\in\mathbb{N}}\int
_0^t \bigl\langle \bigl[}{}-G(t-s,
\bar{x}-\ast) \bigr] A\bigl(X_n(s,\ast)\bigr), e_k(\ast)
\bigr\rangle_{\mathcal{H}}\, \mathrm{d}W_j(s) \biggr\rrvert
^p1_{L_n(t)} \biggr)
\nonumber
\\[-8pt]
\\[-8pt]
&\le& C\E \biggl( \biggl[\int_0^t \mathrm{d}s
\sum_{j\in\mathbb{N}} \bigl\llvert \bigl\langle \bigl[G(t-s,x-
\ast)\nonumber
\\[-1pt]
&&\phantom{C\E \biggl( \biggl[\int_0^t \mathrm{d}s
\sum_{j\in\mathbb{N}} \bigl\llvert \bigl\langle \bigl[}{}-G(t-s,\bar{x}-\ast) \bigr] A\bigl(X_n(s,\ast )\bigr),
e_k(\ast) \bigr\rangle_{\mathcal{H}} \bigr\rrvert
^21_{L_n(s)} \biggr] \biggr)^{{\trup{p}{2}}}
\nonumber
\\[-1pt]
&=& C\E \biggl(\biggl\llvert \int_0^t
\mathrm{d}s \bigl \| \bigl[G(t-s,x-\ast )-G(t-s,\bar{x}-\ast) \bigr] A
\bigl(X_n(s,\ast)\bigr)\bigr  \|_{\mathcal{H}}^2 \biggr\rrvert
1_{L_n(s)} \biggr)^{{\trup{p}{2}}}.
\nonumber
\end{eqnarray}
The process $\{Z_n(t,x):=A(X_n(t,x))1_{L_n(t)}, (t,x)\in[0,T]\times
\IR^3\}$ satisfies the assumption \eqref{s3.21}. Indeed, this is a
consequence of the linear growth of $A$ and \eqref{s4.18}. Then,\vadjust{\goodbreak} by
applying Lemma~\ref{lss3.1.1} and using the Lipschitz continuity of
$A$, we obtain
%
\begin{eqnarray}
\label{s3.33} R_n^1(t,x,\bar{x}) &\le& C \biggl\{ |x-
\bar{x}|^{\alpha_2 \trup{p}{2}} \nonumber
\\
&&\phantom{C\biggl\{} {} +\int_0^t \mathrm{d}s
\Bigl[ \sup_{y\in\R^3} \E \bigl( \bigl | X_n(s,x-y)-X_n(s,
\bar{x}-y) \bigr |^p 1_{L_n(s)} \bigr) \Bigr]
\\
&&\phantom{C\biggl\{} {}  + |x-\bar{x}|^{{\alpha_1}{\trup{p}{2}}}\int
_0^t \mathrm{d}s \Bigl[ \sup
_{y\in\R^3} \E \bigl( \bigl | X_n(s,x-y)-X_n(s,
\bar{x}-y) \bigr |^p 1_{L_n(s)} \bigr) \Bigr]^{{\trup
{1}{2}}} \biggr
\},
\nonumber
\end{eqnarray}
with $\alpha_1 \in\,]0,(2-\beta)\wedge1[$ and $\alpha_2 \in\,]0,(2-\beta)[$.

For a given function $\rho\dvtx [0,T]\times\IR^3\to\IR$ and $t\in
[0,T]$, let $\tau_n$ be the operator defined by
%
\begin{equation}
\label{s3.34} \tau_n(\rho)=\rho \bigl(\bigl(s+2^{-n}
\bigr)\wedge t, x \bigr).
\end{equation}
Let $\mathcal{E}_n$ be the closed subspace of $\mathcal{H}_T$
generated by the orthonormal system of functions
\[
2^nT^{-1}1_{\Delta_i}(\cdot)\otimes
e_j(\ast),\qquad i=0,\ldots,2^n-1,\ j=1,\ldots,n,
\]
and denote by $\pi_n$ the orthogonal projection on $\mathcal{E}_n$.
Notice that $\pi_n\circ\tau_n$ is
a bounded operator on $\mathcal{H}_T$, uniformly in $n$.

Since $X_n^-(s,\ast)$ is $\mathcal{F}_{s_n}$-measurable, by using the
definition of $w^n$ we easily see that
\begin{eqnarray*}
R_n^2(t,x,\bar{x}) &=&\E \biggl(\biggl\llvert \int
_0^t \int_{\R^3}(
\pi_n\circ\tau_n) \bigl(\bigl[G(t-\cdot,x-\ast)-G(t-
\cdot,\bar{x}-\ast)\bigr]
\\
&&\phantom{\E\bigl(|} {}  \times B\bigl(X_n^-(
\cdot,\ast)\bigr) \bigr) (s,y) M(\mathrm{d}s,\mathrm{d}y) \biggr\rrvert
^p1_{L_n(s)} \biggr).
\end{eqnarray*}
By Burkholder's inequality and the properties of the operator $\pi_n\circ\tau_n$, this last expression is bounded up to a constant by
\[
\E \biggl(\int_0^t \mathrm{d}s \bigl\llVert
\bigl(\bigl[G(t-s,x-\ast)-G(t-s,\bar {x}-\ast)\bigr] B\bigl(X_n^-(s,
\ast)\bigr) \bigr) \bigr\rrVert _{\mathcal{H}}^2 1_{L_n(s)}
\biggr)^{{\trup{p}{2}}}.
\]
The properties of the function $B$ along with \eqref{s4.18} imply that
the process $\{Z_n(t,x):=B(X_n^-(t,x))1_{L_n(t)}, (t,x)\in[0,T]\times
\IR^3\}$ satisfies the hypotheses of Lemma~\ref{lss3.1.1}. This yields
%
\begin{eqnarray}
\label{s3.35} R_n^2(t,x,\bar{x}) &\le& C \biggl\{ |x-
\bar{x}|^{\alpha_2 {\trup{p}{2}}} \nonumber
\\
&&\phantom{C \biggl\{}{}+ \int_0^t \mathrm{d}s
\Bigl[ \sup_{y\in\R^3} \E \bigl( \bigl | X_n^-(s,x-y)-X_n^-(s,
\bar{x}-y) \bigr |^p 1_{L_n(s)} \bigr) \Bigr]
\\
&&\phantom{C \biggl\{}{}  + |x-\bar{x}|^{\trup{\alpha_1 p}{2} }\int_0^t
\mathrm{d}s \Bigl[ \sup_{y\in\R^3} \E \bigl( \bigl | X_n^-(s,x-y)-X_n^-(s,
\bar{x}-y)\bigr  |^p 1_{L_n(s)} \bigr) \Bigr]^{{\trup{1}{2}}} \biggr
\},
\nonumber
\end{eqnarray}
where as before, $\alpha_1 \in\,]0,(2-\beta)\wedge1[ $ and $ \alpha
_2 \in\,]0,(2-\beta)[ $.\vadjust{\goodbreak}

Cauchy--Schwarz's inequality along with \eqref{s3.101} yield
\begin{eqnarray*}
R_n^3(t,x,\bar{x}) &\le& C n^{\trup{3p}{2}}2^{n{\trup{p}{2}}}
\\
&&{} \times\E \biggl( \int_0^t \mathrm{d}s \bigl\|
\bigl[G(t-s,x-\ast)-G(t-s,\bar {x}-\ast)\bigr]
\\
&&\phantom{\times\E \bigl( \int_0^t \mathrm{d}s \bigl\|} {}   \times\bigl[B(X_n)-B
\bigl(X_n^-\bigr)\bigr](s,\ast)1_{L_n(s)} \bigr\|_{\mathcal
{H}}^2
\biggr)^{{\trup{p}{2}}}.
\end{eqnarray*}
Notice that an upper bound for the second factor on the right-hand side
of the preceding inequality could be obtained using
Lemma~\ref{lss3.1.1} with $Z_n(t,x):=[B(X_n(t,x))-B(X_n^-(t,x))]
1_{L_n}(t)$. However, this would not be a good strategy to compensate the
first factor (which explodes when $n\to\infty$). Instead, we will try
to quantify the discrepancy between $B(X_n(t,x))$ and $B(X_n^-(t,x))$.
This can be achieved by transferring again the increments of the Green
function to increments of the process
%
\begin{equation}
\label{s3.351} \hat{B}\bigl(X_n(t,x)\bigr)=\bigl[B
\bigl(X_n(t,x)\bigr)-B\bigl(X_n^-(t,x)\bigr)\bigr],
\end{equation}
in the same manner as we did in the proof of Lemma~\ref{lss3.1.1} (see
\cite{dss}, pages 19--20).

Indeed, similarly as in \eqref{s3.23}, we obtain
%
\begin{equation}
\label{s3.36} R_n^3(t,x,\bar{x}) \le C
n^{\trup{3p}{2}}2^{n{\trup{p}{2}}} \sum_{i=1}^4
\E \bigl(\bigl |K_i^t(x,\bar{x}) \bigr |^{{\trup{p}{2}}}1_{L_n(t)}
\bigr),
\end{equation}
where for any $i=1,\ldots,4$, $K_i^t(x,\bar x)$ is given by
$J_i^t(x,\bar x)$ of Lemma~\ref{lss3.1.1} with $Z_n$ replaced by $\hat
{B}(X_n)$.

Using Remark~\ref{rs3.1}, we have
%
\begin{equation}
\label{s3.361} \E \bigl(\bigl |X_n(t,x-y)-X_n(t,\bar{x}-y)
\bigr |^p 1_{L_n(t)} \bigr) = \E \bigl( \bigl | X_n(t,x)-X_n(t,
\bar{x}) \bigr |^p 1_{L_n(t)} \bigr).
\end{equation}

With this property and the definition of $\hat{B}(X_n)$ given in
\eqref{s3.351}, we easily get
%
\begin{eqnarray}
\label{s3.37} && \E \bigl( \bigl | \hat{B}\bigl(X_n(s,x-y)\bigr)-\hat{B}
\bigl(X_n(s,\bar{x}-y)\bigr) \bigr |^p 1_{L_n(s)} \bigr)
\nonumber
\\
&&\quad \le C \bigl[\E \bigl( \bigl | X_n(s,x-y)-X_n^-(s,x-y)
\bigr |^p 1_{L_n(s)} \bigr)
\nonumber
\\[-8pt]
\\[-8pt]
&&\phantom{\quad \le C\bigl[} {}  +\E \bigl( \bigl | X_n(s,
\bar{x}-y)-X_n^-(s,\bar{x}-y) \bigr |^p 1_{L_n(s)} \bigr)
\bigr]
\nonumber
\\
&&\quad \le C n^{\trup{3p}{2}}2^{-np{\trup{(3-\beta)}{2}}},
\nonumber
\end{eqnarray}
uniformly in $(s,x,y)\in[0,T]\times\IR^3\times\IR^3$, where the
last bound is obtained by using \eqref{s4.19}.
This estimate will be applied to the study of the right-hand side of
\eqref{s3.36}.

For $i=1$, \eqref{s3.24} with $Z_n(s,y):=\hat B(X_n(s,y))1_{L_n(s)}$,
along with \eqref{s3.37} yields
%
\begin{equation}
\label{s3.38} \E\bigl(\bigl |K_1^t(x,\bar{x})\bigr |^{{\trup{p}{2}}}1_{L_n(t)}
\bigr) \le C n^{\trup
{3p}{2}}2^{-np{\trup{(3-\beta)}{2}}}.
\end{equation}

Let $\mu_2(x,\bar{x})$ be as in \eqref{s3.241}. Since $x,\bar{x}
\in K$, and $K$ is bounded,
\[
\sup_{x,\bar{x}\in K}\mu_2(x,\bar{x})\le C,
\]
for some finite constant $C>0$. Hence, \eqref{s3.25}, \eqref{s3.26}
(with the same choice of $Z_n$ as before) together with \eqref{s3.37} gives
%
\begin{equation}
\label{s3.39} \E\bigl(\bigl |K_2^t(x,\bar{x})\bigr |^{{\trup{p}{2}}}1_{L_n(t)}
\bigr) + \E \bigl(\bigl |K_3^t(x,\bar{x})\bigr |^{{\trup{p}{2}}}1_{L_n(t)}
\bigr) \le C n^{\trup{3p}{2}}2^{-np {\trup{(3-\beta)}{2}} }.
\end{equation}

Proceeding as in \eqref{s3.27}, but replacing $Z_n(s,y)$ by $\hat
B(X_n(s,y))1_{L_n(s)}$, we obtain
\[
\E\bigl(\bigl |K_4^t(x,\bar{x})\bigr |^{{\trup{p}{2}}}
1_{L_n(t)}\bigr) \le C |x-\bar{x}|^{{\alpha_2}\trup{p}{2}} \int
_0^t \mathrm{d}s\sup_{y\in\R^3}
\E \bigl( \bigl | \hat{B}\bigl(X_n(s,y)\bigr) \bigr |^p1_{L_n(s)}
\bigr).
\]
By the definition of $\hat B(X_n)$, and applying \eqref{s4.19}, we have
\[
\sup_{(s,y)\in[0,T]\times\R^3} \E \bigl( \bigl | \hat {B}\bigl(X_n(s,y)
\bigr) \bigr |^p1_{L_n(s)} \bigr) \le C n^{\trup{3p}{2}}
2^{-np{\trup{(3-\beta)}{2}}}.
\]
Thus,
%
\begin{equation}
\label{s3.40} \E\bigl(\bigl |K_4^t(x,\bar{x})\bigr |^{{\trup{p}{2}}}
1_{L_n(t)}\bigr)\le C n^{\trup
{3p}{2}}2^{-np{\trup{(3-\beta)}{2}}}.
\end{equation}
Putting together \eqref{s3.36} and \eqref{s3.38}--\eqref{s3.40} yields
%
\begin{equation}
\label{s3.41} R_n^3(t,x,\bar{x}) \le C f_n,
\end{equation}
where $f_n=n^{3p} 2^{-np [{\trup{(3-\beta)}{2}}-{\trup
{1}{2}} ]}$. Since $\beta\in\,]0,2[$, $\lim_{n\to\infty}f_n=0$.

The last part of the proof consists of getting estimates for the term
$R_n^4(t,x,\bar{x})$. This is done using first
Cauchy--Schwarz's inequality and then, applying Lemma~\ref{lss3.1.1}
with $Z_n$ replaced by $D(X_n) 1_{L_n}$.
The Lipschitz continuity of $D$ along with the estimate \eqref{s4.18}
ensure that assumption \eqref{s3.21} is satisfied. We obtain
%
\begin{eqnarray}
\label{s3.42} R_n^4(t,x,\bar{x}) &\le&\| h
\|_{\mathcal{H}_t}^p \E \bigl( \bigl |\bigl \|\bigl[G(t-\cdot,x-\ast)-G(t-
\cdot,\bar{x}-\ast)\bigr] D\bigl(X_n(\cdot,\ast)\bigr)
1_{L_n(t)} \bigr \|_{\mathcal{H}_t}^2\bigr  |^{{\trup{p}{2}}} \bigr)
\nonumber
\\
&\le& C \biggl\{ |x-\bar{x}|^{\alpha_2 \trup{p}{2} } + \int_0^t
\mathrm{d}s \sup_{y\in\R^3} \E \bigl( \bigl | X_n(s,x-y)-X_n(s,
\bar {x}-y)\bigr  |^p 1_{L_n(s)} \bigr)
\\
&&\phantom{C\biggl\{} {}  +|x-\bar{x}|^{{\alpha_1}\trup{p}{2}}\int
_0^t \mathrm{d}s \Bigl[\sup
_{y\in\R^3} \E \bigl( \bigl | X_n(s,x-y)-X_n(s,
\bar{x}-y) \bigr |^p 1_{L_n(s)} \bigr) \Bigr]^{{\trup
{1}{2}}} \biggr
\},\quad
\nonumber
\end{eqnarray}
where $ \alpha_1 \in\,]0,(2-\beta)\wedge1[ $ and $ \alpha_2 \in\,]0,(2-\beta)[$.

After having applied the change of variable $u\mapsto x-\bar{x}+y$, we have
\[
R_n^5(t,x,\bar{x}) =\E \biggl(\biggl\llvert \int
_0^t \int_{\R^3}G(t-s,x-
\mathrm{d}y)\bigl[b\bigl(X_n(s,y)\bigr) -b\bigl(X_n(s,y-x+
\bar{x})\bigr) \bigr]\,\mathrm{d}s \biggr\rrvert ^p1_{L_n(t)}
\biggr).
\]
Applying H\"older's inequality, we obtain
%
\begin{eqnarray}
\label{s3.31}
&&R_n^5(t,x,\bar{x}) \nonumber
\\
&&\quad\le  \biggl(\int
_0^t\int_{\R^3} G(t-s,x-
\mathrm{d}y)\,\mathrm{d}s \biggr)^{p-1}
\nonumber
\\[-8pt]
\\[-8pt]
&&\qquad  {} \times\int_0^t \int_{\R^3}
G(t-s,x-\mathrm{d}y)\E \bigl( \bigl |b\bigl(X_n(s,y)\bigr)-b
\bigl(X_n(s,y-x+\bar{x})\bigr) \bigr |^p 1_{L_n(s)}
\bigr)\,\mathrm{d}s\nonumber
\\
&&\quad \le  C\int_0^t \mathrm{d}s \sup
_{y\in\R^3}\E \bigl( \bigl |X_n(s,x-y)-X_n(s,
\bar{x}-y) \bigr |^p1_{L_n(s)} \bigr).
\nonumber
\end{eqnarray}

Bringing together the inequalities \eqref{s3.33}, \eqref{s3.35},
\eqref{s3.41}, \eqref{s3.42} and \eqref{s3.31}, yields
\begin{eqnarray*}
&& \E \bigl(\bigl |X_n(t,x)-X_n(t,\bar{x}) \bigr |^p
1_{L_n(t)} \bigr)
\\
&& \quad \le C \biggl\{ f_n+ |x-\bar{x}|^{\alpha_2 \trup{p}{2} } + \int
_0^t \mathrm{d}s \Bigl[ \sup
_{y\in\R^3}\E \bigl( \bigl |X_n(s,x-y)-X_n(s,
\bar{x}-y) \bigr |^p 1_{L_n(s)} \bigr) \Bigr]
\\
&&\phantom{\quad \le C\biggl\{} {}  + |x-\bar{x}|^{{\alpha_1}\trup{p}{2}}\int
_0^t \mathrm{d}s \Bigl[\sup
_{y\in\R^3} \E \bigl( \bigl | X_n(s,x-y)-X_n(s,
\bar{x}-y)\bigr  |^p 1_{L_n(s)} \bigr) \Bigr]^{{\trup
{1}{2}}}
\\
&&\phantom{\quad \le C\biggl\{} {}  + \int_0^t
\mathrm{d}s \Bigl[ \sup_{y\in\R^3}\E \bigl( \bigl |X_n^-(s,x-y)-X_n^-(s,
\bar{x}-y) \bigr |^p 1_{L_n(s)} \bigr) \Bigr]
\\
&&\phantom{\quad \le C\biggl\{} {}  + |x-\bar{x}|^{{\alpha_1}\trup{p}{2}} \int
_0^t \mathrm{d}s \Bigl[\sup
_{y\in\R^3} \E \bigl( \bigl | X_n^-(s,x-y)-X_n^-(s,
\bar{x}-y) \bigr |^p 1_{L_n(s)} \bigr) \Bigr]^{{\trup{1}{2}}} \biggr
\}.
\end{eqnarray*}
By Remark~\ref{rs3.1}, the right-hand side of this inequality is equal
(up to a constant) to
\begin{eqnarray*}
&& f_n + |x-\bar{x}|^{\alpha_2 \trup{p}{2} } + \int_0^t
\E \bigl( \bigl |X_n(s,x)-X_n(s,\bar{x}) \bigr |^p
1_{L_n(s)} \bigr)\,\mathrm{d}s
\nonumber
\\
&&\quad{} + \int_0^t \E \bigl(
\bigl |X_n^-(s,x)-X_n^-(s,\bar{x})\bigr  |^p
1_{L_n(s)} \bigr)\,\mathrm{d}s
\nonumber
\\
&&\quad{} + |x-\bar{x}|^{{\alpha_1}\trup{p}{2}} \int_0^t
\bigl[ \E \bigl( \bigl | X_n(s,x)-X_n(s,\bar{x})
\bigr |^p 1_{L_n(s)} \bigr) \bigr]^{{\trup{1}{2}}} \,\mathrm{d}s
\\
&&\quad{} + |x-\bar{x}|^{{\alpha_1}\trup{p}{2}} \int_0^t
\bigl[\E \bigl( \bigl | X_n^-(s,x)-X_n^-(s,\bar{x})
\bigr |^p 1_{L_n(s)} \bigr) \bigr]^{{\trup{1}{2}}}\, \mathrm{d}s.
\end{eqnarray*}
With this, we see that $\varphi_{n,p}^0(t,x,\bar x)$ is bounded by the
right-hand side of \eqref{s3.29}.

Finally, we prove that the same bound holds for $\varphi
_{n,p}^-(t,x,\bar x)$ too. Indeed,
For every $i=1,\ldots,5$, we consider the terms $R_n^i(t,x,\bar x)$
defined in the first part of the proof,
and we replace the domain of integration of the time variable $s$
($[0,t]$) by $[0,t_n]$. We denote the corresponding new expressions by
$S_n^i(t,x,\bar x)$.
From \eqref{s3.8.2}, we obtain the following
\[
\varphi_{n,p}^-(t,x,\bar x) \le C \sum_{i=1}^5
S_n^i(t,x,\bar x).
\]
Since $t_n\le t$, it can be checked that, similarly as for
$R_n^i(t,x,\bar x)$,
$S_n^i(t,x,\bar x)$, $i=1,\ldots,5$, are bounded by \eqref{s3.33},
\eqref{s3.35}, \eqref{s3.41}, \eqref{s3.42}, \eqref{s3.31},
respectively. This ends the
proof of the lemma.
\end{pf*}

\subsection{Increments in time}
\label{ss3.2}

Throughout this section, we fix $t_0\in\,]0,T]$, and a compact set
$K\subset\IR^3$. We shall prove the following proposition.
%
\begin{proposition}
\label{pss3.2.1}
Assume that Hypothesis~\textup{\ref{HypB}} holds. Fix $t, \bar t\in[t_0,T]$.
Then for any $p\in[1,\infty)$ and $\rho\in\, ]0,\frac{2-\beta
}{2}[$, there exists a finite constant $C$ such that
%
\begin{equation}
\label{s3.43} \sup_{n\ge1} \sup_{x\in K}\bigl \Vert
\bigl(X_n(t,x) - X_n(\bar t,x)\bigr) 1_{L_n(\bar t)}
\bigr \Vert_p \le C|t-\bar t|^\rho.
\end{equation}
\end{proposition}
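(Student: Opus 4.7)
The proof will follow the structure of Proposition \ref{pss3.1.1} in Section \ref{ss3.1}. Using \eqref{s3.6}, we decompose $X_n(\bar t,x)-X_n(t,x)$ into five terms, corresponding to the stochastic integral against $M$, the pathwise $w^n$-integral against $B(X_n^-)$, the correction term involving $B(X_n)-B(X_n^-)$, the $h$-integral against $D(X_n)$, and the deterministic drift. Each splits further into an integral over the short interval $[t,\bar t]$ and an integral over $[0,t]$ whose integrand contains the difference $G(\bar t-s,x-\ast)-G(t-s,x-\ast)$.

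The contribution from the interval $[t,\bar t]$ is handled by standard moment estimates: combining Burkholder's inequality, \eqref{fundamental}, and the scaling identity
\begin{equation*}
\int_t^{\bar t} ds \iint G(\bar t-s,du)G(\bar t-s,dv) f(u-v) \le C|\bar t-t|^{2-\beta},
\end{equation*}
together with \eqref{s4.18} and the linear growth of $A,B,D,b$, yields terms bounded by a constant multiple of $|\bar t-t|^{(2-\beta)p/2}$. The $w^n$-term on $[t,\bar t]$ is controlled as in the proof of Lemma \ref{lss3.1.2} using \eqref{s3.14}, while the drift term is bounded by H\"older's inequality using $\int_t^{\bar t}\int G(\bar t-s,x-dy)\,ds\le C|\bar t-t|^2$.

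The delicate part is the integral over $[0,t]$ with $G$-difference. The plan is to establish a temporal analogue of Lemma \ref{lss3.1.1}: for a predictable $Z_n$ satisfying \eqref{s3.21},
\begin{equation*}
\E\!\left(\left|\int_0^t\!\bigl\|Z_n(s,\ast)[G(\bar t-s,x-\ast)-G(t-s,x-\ast)]\bigr\|_{\mathcal H}^2 ds\right|^{p/2}\right)\le C\Big(|\bar t-t|^{\alpha_2 p/2}+\Xi_n(t,\bar t,x)\Big),
\end{equation*}
with $\Xi_n$ involving spatial $L^p$-increments of $Z_n$ over shifts of size $|\bar t-t|$. To derive it, we compute the $\mathcal H$-norm by \eqref{fundamental}, parametrize $G(r,x-dy)=\tfrac{r}{4\pi}\sigma_1(d\omega)$ with $y=x-r\omega$, expand the square into four summands, and use the key identity $|(\bar t-s)\omega-(t-s)\omega|=|\bar t-t|$ for every $\omega\in S^2$. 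This transfers the increments of the measure $G$ into spatial shifts of the integrand $Z_n$ by vectors of norm $|\bar t-t|$, in the spirit of \cite{dss} and of the proof of Lemma \ref{lss3.1.1}; the remaining kernel $|u-v|^{-\beta}$ produces factors $|\bar t-t|^{\alpha_i}$ exactly as in the $\mu_1,\mu_2,\mu_4$ estimates of that lemma.

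Applying this temporal lemma with $Z_n=A(X_n)1_{L_n}$, $Z_n=B(X_n^-)1_{L_n}$, and $Z_n=D(X_n)1_{L_n}$, the resulting spatial-shift increments are bounded directly by $C|\bar t-t|^{\rho p}$ via Proposition \ref{pss3.1.1} and the Lipschitz continuity of $A,B,D$; no further Gronwall iteration is needed, since the spatial H\"older regularity already established supplies the bound in closed form. The correction term involving $B(X_n)-B(X_n^-)$ is treated exactly as in the derivation of \eqref{s3.41}: the explosive prefactor $n^{3/2}2^{n/2}$ from \eqref{s3.101} is compensated by the decay $n^{3/2}2^{-n(3-\beta)/2}$ coming from \eqref{s4.19}, producing a vanishing sequence $f_n\to 0$. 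The main obstacle is the spherical-coordinates bookkeeping in the temporal version of Lemma \ref{lss3.1.1}: the four cross terms mixing $G(\bar t-s)$ and $G(t-s)$ must be organized so that the differences of the Green measure translate into spatial shifts of size $|\bar t-t|$ compatible with the Riesz kernel $|u-v|^{-\beta}$, in order to match the exponents $\alpha_1\in(0,(2-\beta)\wedge 1)$ and $\alpha_2\in(0,2-\beta)$ used in \eqref{s3.220}.
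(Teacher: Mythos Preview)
Your approach is essentially the paper's: the five-term decomposition, the $[t,\bar t]$ versus $[0,t]$ split, and the use of the already-proved spatial H\"older regularity (Propositions~\ref{pss3.1.1}--\ref{pss3.1.2}) all match. Your ``temporal analogue of Lemma~\ref{lss3.1.1}'' is exactly the paper's Lemma~\ref{lss3.2.1}; your unit-sphere parametrization $y=x-r\omega$ and the paper's scaling $u\mapsto \frac{\bar t-s}{t-s}u$ are the same change of variables (on $\mathrm{supp}\,G(t-s)$ one has $|u|=t-s$, so $\frac{\bar t-s}{t-s}u=(\bar t-s)\omega$), and the resulting four cross terms $Q^1$--$Q^4$ with their $\nu_1,\nu_2,\nu_4$ kernel bounds are what you describe.

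Two small points you should tighten. First, the drift term on $[0,t]$ is not an $\mathcal H$-norm and so does not fall under your temporal lemma; the paper handles it separately by the scaling $y\mapsto x+(t-s)(y-x)$ that reduces $G(t-s,x-dy)$ to $G(1,x-dy)$, after which the spatial H\"older bound from Proposition~\ref{pss3.1.1} applies directly (see the treatment of $R_n^{5,1}$). Second, your claim that ``no further Gronwall iteration is needed'' is not quite right: the correction term $R_n^3$ leaves a residual $f_n=n^{3p}2^{-np(2-\beta)/2}$ that carries no $|\bar t-t|$ factor and therefore cannot be absorbed into $C|\bar t-t|^{\rho p}$ by a direct estimate uniform in $n$. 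The paper closes this by introducing $\varPsi_{n,x,p}^{\bar t}(t)$ and running a final Gronwall-type step in which $f_n$ (bounded since $t\ge t_0>0$) is absorbed into the integral term; you should include an analogous absorption argument rather than stop at ``$f_n\to 0$''.
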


The next lemma is meant to play a similar r\^ole than Lemma~\ref
{lss3.1.1} but in this case, for integrals containing increments in
time of the Green function $G(t)$.

\begin{lemma}
\label{lss3.2.1}
Consider a sequence of stochastic processes $\{D_n(t,x), (t,x)\in
[0,T]\times\IR^3\}$, $n\ge1$, satisfying the following conditions:

For any $p\in[2,\infty[$,
%
\begin{equation}
\label{s3.44} \sup_n\sup_{(t,x)\in[t_0,T]\times\R^3} \E \bigl(
\bigl |D_n(t,x)\bigr  |^p \bigr)\le C.
\end{equation}
There exists $\rho_1>0$ and for any $x,y\in K$,
%
\begin{equation}
\label{s3.45} \sup_n \sup_{t\in[t_0,T]} \E
\bigl( \bigl |D_n(t,x)-D_n(t,y) \bigr |^p \bigr)\le
C|x-y|^{\rho_1 p},
\end{equation}
where $C$ is a finite constant and $\rho_1>0$.

For $0\le t_0\le t\le\bar{t} \le T$ and $x\in K$, set
\[
J_n(t,\bar t,x)=\int_0^t
\mathrm{d}s \bigl \Vert D_n(x,\ast) \bigl[G(\bar t-s,x-\ast )-G(t-s,x-\ast)
\bigr]\bigr \Vert^2_{\mathcal{H}}.
\]
Then, for any $p\in[2,\infty[$ there exists a finite constant $C>0$
such that
%
\begin{equation}
\label{s3.47} \E \bigl(J_n(t,\bar t,x)^{\trup{p}{2}} \bigr) \le C
\bigl( |\bar{t}-t|^{\rho_1 p} + |\bar{t}-t|^{(\rho_1+\alpha
_1) {\trup{p}{2}}} + |
\bar{t}-t|^{\alpha_2 p /2} \bigr),
\end{equation}
with $\alpha_1\in\,]0,1\wedge(2-\beta)[$ and $\alpha_2\in\, ]0,
(2-\beta) [$.
\end{lemma}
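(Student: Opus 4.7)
My plan is to mirror the proof of Lemma~\ref{lss3.1.1}, adapting the four-integral expansion to the time-increment setting. Put $\bar r = \bar t - s$, $r = t - s$, $h = \bar t - t$, and $\eta_x(du) := G(\bar r, x-du) - G(r, x-du)$, so that $J_n(t,\bar t,x) = \int_0^t \|D_n(s,\ast)\eta_x\|_\mathcal{H}^2\, ds$. Expanding the squared $\mathcal{H}$-norm via~\eqref{fundamental} and developing the bilinear product $\eta_x(du)\eta_x(dv)$ produces a sum of four double integrals with kernel $|u-v|^{-\beta}$ against $G(\rho_i, x-du)\,G(\rho_j, x-dv)$, $\rho_1 = \bar r$, $\rho_2 = r$, in direct analogy with the decomposition $J_1,\dots,J_4$ of Lemma~\ref{lss3.1.1}.

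The key step is to trade the time-increment in the Green measure for a space-increment in $D_n$. Parameterizing the sphere of radius $\rho$ about $x$ by $u = x + \rho\omega$, so that $G(\rho, x-du) = \tfrac{\rho}{4\pi}d\sigma(\omega)$, and writing $\bar r = r + h$, one obtains
\begin{equation*}
\int g(u)\,\eta_x(du) = \frac{h}{\bar r}\int g(u)\,G(\bar r,x-du) + \frac{r}{\bar r}\int\bigl[g(u) - g\bigl(u-h(u-x)/\bar r\bigr)\bigr]\,G(\bar r,x-du).
\end{equation*}
Both prefactors are bounded by $1$ since $\bar r \ge h$ for $s\in[0,t]$, and the displacement $h(u-x)/\bar r$ has magnitude exactly $h$ on the support of $G(\bar r,x-\ast)$. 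Applying this identity in both coordinates of the expanded squared norm yields four pieces: a pure ``boundary'' piece scaled by $(h/\bar r)^2$, two mixed pieces, and a pure double-shift piece, in direct analogy with $h_1,\dots,h_4$ of Lemma~\ref{lss3.1.1}. Each piece is bounded via Cauchy--Schwarz in the $|\cdot|^{-\beta}$-form, using $\int G(\rho,x-dv)|u-v|^{-\beta} \le C\rho^{1-\beta}$, the uniform moment bound~\eqref{s3.44}, the Hölder estimate~\eqref{s3.45}, and the same $|Df|$- and $|D^2 f|$-estimates that produced $\mu_2, \mu_4$ in Lemma~\ref{lss3.1.1} (here applied to spatial shifts of size $h$). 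Applying Jensen's inequality to $\int_0^t(\,\cdot\,)ds$ then delivers contributions of the three stated orders: $|\bar t-t|^{\alpha_2 p/2}$ from the pure-boundary piece (via $\|\eta_x\|_\mathcal{H}^2\lesssim h^{2-\beta}$), $|\bar t-t|^{(\rho_1+\alpha_1)p/2}$ from the two cross pieces, and $|\bar t-t|^{\rho_1 p}$ from the pure double-shift piece.

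The main obstacle is that the spheres carrying $G(\bar r,x-\ast)$ and $G(r,x-\ast)$ are rescalings, not translates, of each other, so no single change of variables identifies their supports as was done in Lemma~\ref{lss3.1.1}. Consequently the prefactor $h/\bar r$ is unavoidable, and after squaring and integrating in $s$ one must bound integrals of the form $\int_0^t (h/\bar r)^p \bar r^{(2-\beta)p/2}ds$, whose control near $s = t$ is delicate for large $p$. The restrictions $\alpha_2 < 2-\beta$ and $\alpha_1 < 1\wedge(2-\beta)$ precisely absorb the small polynomial loss coming from the singular radial integrals $\int_h^{\bar t} \rho^{-\beta p/2}d\rho$, which are kept finite by the lower bound $\bar r \ge h$.
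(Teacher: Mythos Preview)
Your high-level idea---rewrite the time-increment of $G$ as a spatial shift of size $h=\bar t-t$ via the sphere parametrization---is exactly right and is also what the paper does. But the decomposition you propose does not deliver the three stated orders in the way you claim.

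The gap is this: in your identity
\[
\int g(u)\,\eta_x(du)=\frac{h}{\bar r}\int g(u)\,G(\bar r,x-du)+\frac{r}{\bar r}\int\bigl[g(u)-g(\Phi(u))\bigr]\,G(\bar r,x-du),
\]
the shift $\Phi$ acts on the \emph{whole} integrand, and in the bilinear form the integrand in $u$ is $D_n(s,u)f(u-v)$, not $D_n(s,u)$ alone. Hence your ``pure double-shift'' piece $BB$ is the second-order increment of $(u,v)\mapsto D_n(u)D_n(v)f(u-v)$, not a product of two $D_n$-increments. Expanding by the product rule,
\[
\bar D^2\bigl[D_n(u)D_n(v)f(u-v)\bigr]
=(\Delta_u D_n)(\Delta_v D_n)\,f+\text{(two mixed $\Delta D_n\cdot\Delta f$ terms)}+D_n D_n\,\bar D^2 f,
\]
so $BB$ by itself already contains contributions of all three orders $|\bar t-t|^{\rho_1 p}$, $|\bar t-t|^{(\rho_1+\alpha_1)p/2}$, $|\bar t-t|^{\alpha_2 p/2}$; it does not yield $|\bar t-t|^{\rho_1 p}$ alone. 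Likewise $AB$, $BA$ each split into a $D_n$-increment part and an $f$-increment part. And your justification of the $AA$ bound ``via $\|\eta_x\|_{\mathcal H}^2\lesssim h^{2-\beta}$'' is not correct: $AA$ is not $\|\eta_x\|_{\mathcal H}^2$ but $(h/\bar r)^2\|D_n G_{\bar r}\|_{\mathcal H}^2$, whose $s$-integral has total mass $h^2\int_h^{\bar t}\rho^{-\beta}d\rho$; this does give an admissible power of $h$, but for a different reason than you state.

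The paper avoids the intermediate $\eta_x=A+B$ splitting entirely. It changes variables to the \emph{smaller} sphere $G(t-s,du)G(t-s,dv)$, producing the scaling factor $\lambda=\frac{\bar t-s}{t-s}$ in both the arguments of $D_n$ and of $f$, and then writes the result directly as a sum $Q^1+\cdots+Q^4$ in which the $D_n$-increments and the $f$-increments are already separated (this is the decomposition on p.~28 of \cite{dss}). The price is that $\lambda$ is unbounded as $s\uparrow t$, so one needs the nontrivial estimates $\nu_1<\infty$, $\nu_2\le C|\bar t-t|^{\alpha_1}$, $\nu_4\le C|\bar t-t|^{\alpha_2}$ (Lemmas~6.3--6.5 of \cite{dss}); with those in hand each $Q^i$ gives one of the three orders cleanly via H\"older and Cauchy--Schwarz, and the support property $|u|=t-s$ turns $\bigl|\tfrac{\bar t-t}{t-s}u\bigr|$ into exactly $|\bar t-t|$ when invoking \eqref{s3.45}. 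Your route can be completed, but only after performing the product-rule decomposition above inside $BB$ (and the analogous splitting inside $AB$, $BA$), at which point you are essentially reproducing the $Q^i$-structure anyway. The ``obstacle'' you describe in your last paragraph---singular $s$-integrals of $(h/\bar r)^p$---does not in fact arise if you apply H\"older with respect to the product measure before taking the $p/2$ power, which is how both Lemma~\ref{lss3.1.1} and the paper's proof proceed.
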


\begin{pf}
First of all we notice that, as a consequence of Burkholder's
inequality, the $L^p$-moment of the stochastic integral
\[
\int_0^t \int_{\R^3}
D_n(x,y)\bigl[G(\bar t-s,x-y)-G(t-s,x-y)\bigr] M(\mathrm{d}s,
\mathrm{d}y),
\]
is bounded up to a positive constant, by $\E (J_n(t,\bar
t,x)^{\trup{p}{2}} )$.

We write $J_n(t,\bar t,x)$ using \eqref{fundamental}. This gives
\begin{eqnarray*}
J_n(t,\bar t,x)&=& C\int_0^t
\mathrm{d}s \int_{\R^3}\int_{\R^3}
D_n(x,y)\bigl[G(\bar t-s,x-y)-G(t-s,x-y)\bigr]
\\
&&{} \times D_n(x,z)\bigl[G(\bar t-s,x-z)-G(t-s,x-z)\bigr]
|y-z|^{-\beta}.
\end{eqnarray*}
Then, as in \cite{dss} page 28 (see the study of the term
$T_2^n(t,\bar t,x)$ in this reference), we have
%
\begin{equation}
\label{s3.48} \E \bigl(J_n(t,\bar t,x)^{\trup{p}{2}} \bigr)\le C
\sum_{k=1}^4 \E \bigl(\bigl |Q^i(t,
\bar{t},x)\bigr |^{{\trup{p}{2}}} \bigr),
\end{equation}
where for $i=1,\ldots,4$,
%
\begin{equation}
\label{s3.481} Q^i(t,\bar{t},x):=\int_0^t
\mathrm{d}s\int_{\R^3} \int_{\R^3} G(t-s,
\mathrm{d}u)G(t-s,\mathrm{d}v) r_i(t,\bar{t},s,x,u,v)
\end{equation}
and
\begin{eqnarray*}
r_1(t,\bar{t},s,x,u,v) &:=&\frac{\bar{t}-s}{t-s} f \biggl(v
\frac{\bar
{t}-s}{t-s}-u \biggr) \biggl[D_n \biggl(s,x-\frac{\bar{t}
-s}{t-s}u
\biggr)-D_n(s,x-u) \biggr]
\\
&&{} \times \biggl[D_n \biggl(s,x-\frac{\bar{t}-s}{t-s}v
\biggr)-D_n(s,x-v) \biggr],
\\
r_2(t,\bar{t},s,x,u,v) &:=& \biggl\{ \biggl(\frac{\bar{t}-s}{t-s}
\biggr)^2f \biggl(\frac{\bar{t}-s}{t-s}(v-u) \biggr) -\frac{\bar{t}-s}{t-s}
f \biggl(v\frac{\bar{t}-s}{t-s}-u \biggr) \biggr\}
\\
&&{} \times D_n \biggl(s,x-\frac{\bar{t}-s}{t-s}u \biggr)
\biggl[D_n \biggl(s,x-\frac{\bar{t}-s}{t-s}v \biggr)-D_n(s,x-v)
\biggr],
\\
r_3(t,\bar{t},s,x,u,v) &:=& \biggl\{ \biggl(\frac{\bar{t}-s}{t-s}
\biggr)^2f \biggl(\frac{\bar{t}-s}{t-s}(v-u) \biggr) -\frac{\bar{t}-s}{t-s}
f \biggl(v-u\frac{\bar{t}-s}{t-s} \biggr) \biggr\}
\\
&&{}\times \biggl[D_n \biggl(s,x-\frac{\bar{t}-s}{t-s}u
\biggr)-D_n(s,x-u) \biggr] D_n (s,x-v ),
\\
r_4(t,\bar{t},s,x,u,v) &:=& \biggl\{ \biggl(\frac{\bar{t}-s}{t-s}
\biggr)^2 f \biggl(\frac{\bar{t}-s}{t-s}(v-u) \biggr) -\frac{\bar{t}-s}{t-s}
f \biggl(v\frac{\bar{t}-s}{t-s}-u \biggr)
\\
&&{} -  \frac{\bar{t}-s}{t-s} f \biggl(v-u\frac{\bar
{t}-s}{t-s} \biggr)+f(v-u)
\biggr\} D_n(s,x-u)D_n(s,x-v).
\end{eqnarray*}

Let
\[
\nu_1(s,t,\bar{t}):=\int_{\R^3} \int
_{\R^3} G(t-s,\mathrm{d}u)G(t-s,\mathrm{d}v)\frac{\bar{t}-s}{t-s}
f \biggl(v\frac{\bar{t}-s}{t-s}-u \biggr).
\]
Following the arguments of the proof of Lemma 6.3 in \cite{dss} (with
$G_n$ replaced by $G$), we see that
%
\begin{equation}
\label{s3.49} \sup_{0\le s\le t\le\bar{t}\le T} \nu_1(s,t,\bar{t}) <
\infty.
\end{equation}
Applying H\"older's and then Cauchy--Schwarz' inequalities, along with
\eqref{s3.45} yield
%
\begin{eqnarray}
\label{s3.491} \E \bigl( \bigl |Q^1(t,\bar{t},x)\bigr  |^{{\trup{p}{2}}} \bigr)
&\le& \Bigl(\sup_{0\le s\le t\le\bar{t}\le T}\nu_1(s,t,\bar{t})
\Bigr)^{{\trup{p}{2}}-1}
\nonumber
\\
&&{} \times\int_0^t \mathrm{d}s\int
_{R^3} \int_{R^3} G(t-s,\mathrm{d}u)G(t-s,
\mathrm{d}v)\frac
{\bar{t}-s}{t-s} f \biggl(v\frac{\bar{t}-s}{t-s}-u \biggr)
\nonumber
\\
&&{} \times \biggl[\E \biggl(\biggl\llvert D_n \biggl(s,x-
\frac{\bar
{t}-s}{t-s}u \biggr)-D_n(s,x-u) \biggr\rrvert ^p
\biggr) \biggr]^{{\trup{1}{2}}}
\nonumber
\\[-8pt]
\\[-8pt]
&&{} \times \biggl[\E \biggl(\biggl\llvert D_n \biggl(s,x-
\frac{\bar
{t}-s}{t-s}v \biggr)-D_n(s,x-v) \biggr\rrvert ^p
\biggr) \biggr]^{{\trup{1}{2}}}
\nonumber
\\
&\le& C\int_0^t \mathrm{d}s\int
_{R^3} \int_{R^3} G(t-s,\mathrm{d}u)G(t-s,
\mathrm{d}v) \frac{\bar{t}-s}{t-s} f \biggl(v\frac{\bar{t}-s}{t-s}-u \biggr)
\nonumber
\\
&&{} \times\biggl\llvert \frac{\bar{t}-t}{t-s}u\biggr\rrvert ^{\rho_1 {\trup
{p}{2}}}\biggl
\llvert \frac{\bar{t}-t}{t-s}v \biggr\rrvert ^{\rho_1 {\trup
{p}{2}}}.
\nonumber
\end{eqnarray}
The support of the measure $G(t)$ is $\{x\in\IR^3\dvtx  |x|= t\}$. Using
this property and \eqref{s3.49}, we obtain
%
\begin{equation}
\label{s3.50} \E \bigl( \bigl |Q^1(t,\bar{t},x)\bigr  |^{{\trup{p}{2}}} \bigr)
\le C|t-\bar t|^{\rho_1p}.
\end{equation}

Let
\begin{eqnarray*}
\nu_2(s,t,\bar{t})&:=&\int_{\R^3} \int
_{\R^3} G(t-s,\mathrm{d}u)G(t-s,\mathrm{d}v)
\\
&&{} \times\biggl\llvert \biggl(\frac{\bar{t}-s}{t-s} \biggr)^2f
\biggl(\frac
{\bar{t}-s}{t-s}(v-u) \biggr) -\frac{\bar{t}-s}{t-s} f \biggl(
\frac{\bar{t}-s}{t-s}v-u \biggr)\biggr\rrvert .
\end{eqnarray*}
A slight modification of Lemma~6.4 in \cite{dss} (where $G_n$ is
replaced by $G$), yields
%
\begin{equation}
\label{s3.51} \sup_{s\le t\le\bar{t}\le T} \nu_2(s,t,\bar{t}) \le
C |t-\bar {t}|^{\alpha_1},
\end{equation}
with $\alpha_1\in\,]0,(2-\beta)\wedge1[$. Then, H\"older's and
Cauchy--Schwarz's inequalities along with \eqref{s3.44}, \eqref{s3.45}
and \eqref{s3.51} imply
%
\begin{eqnarray}
\E \bigl( \bigl |Q^2(t,\bar{t},x) \bigr |^{{\trup{p}{2}}} \bigr) &\le& \Bigl(\sup
_{0\le s\le t\le\bar{t}\le T}\nu_2(s,t,\bar {t}) \Bigr)^{{\trup{p}{2}}-1}
\int_0^t \mathrm{d}s\int_{\R^3}
\int_{\R^3} G(t-s,\mathrm{d}u)G(t-s,\mathrm{d}v)
\nonumber
\\
&&{} \times\biggl\llvert \biggl(\frac{\bar{t}-s}{t-s} \biggr)^2f
\biggl(\frac
{\bar{t}-s}{t-s}(v-u) \biggr) -\frac{\bar{t}-s}{t-s} f \biggl(
\frac{\bar{t}-s}{t-s}v-u \biggr) \biggr\rrvert
\nonumber
\\
&&{} \times \biggl[ \biggl(\E\biggl\llvert D_n \biggl(s,x-
\frac{\bar
{t}-s}{t-s}u \biggr)\biggr\rrvert ^p \biggr)
\biggr]^{{\trup{1}{2}}}
\nonumber
\\
\label{s3.511}&&{} \times \biggl[ \biggl(\E\biggl\llvert D_n
\biggl(s,x-\frac{\bar
{t}-s}{t-s}v \biggr)-D_n(s,x-v)\biggr\rrvert
^p \biggr) \biggr]^{{\trup
{1}{2}}}
\\
&\le& C\llvert \bar{t}-t\rrvert ^{\rho_1 {\trup{p}{2}}} \Bigl(\sup_{0\le s\le t\le\bar{t}\le T}
\nu_2(s,t,\bar{t}) \Bigr)^{{\trup
{p}{2}}}
\nonumber
\\
\label{s3.52} &\le& C|\bar{t}-t|^{(\rho_1+\alpha_1){\trup{p}{2}}},
\end{eqnarray}
with $\alpha_1\in\,]0,1\wedge(2-\beta)[$.

Similarly,
%
\begin{equation}
\label{s3.53} \E \bigl( \bigl |Q^3(t,\bar{t},x) \bigr |^{{\trup{p}{2}}} \bigr)
\le C|\bar{t}-t|^{(\rho_1+\alpha_1){\trup{p}{2}}},
\end{equation}
$\alpha_1\in\,]0,1\wedge(2-\beta)[$.

Define
\begin{eqnarray*}
\nu_4(s,t,\bar{t})&:=&\int_{\R^3} \int
_{\R^3} G(t-s,\mathrm{d}u)G(t-s,\mathrm{d}v)
\\
&&{}\times \biggl\{ \biggl(\frac{\bar{t}-s}{t-s} \biggr)^2 f \biggl(
\frac
{\bar{t}-s}{t-s}(v-u) \biggr)
\\\
&&\phantom{\times \biggl\{}{}-\frac{\bar{t}-s}{t-s} f \biggl(v\frac{\bar{t}-s}{t-s}-u
\biggr) -\frac{\bar{t}-s}{t-s} f \biggl(v-u\frac{\bar{t}-s}{t-s} \biggr)+f(v-u) \biggr
\}.
\end{eqnarray*}
Replacing $G_n$ by $G$ in \cite{dss}, Lemma~6.5 yields
%
\begin{equation}
\label{s3.54} \sup_{s\le t\le\bar{r}\le T} \nu_4(s,t,\bar{t})\le C
|\bar {t}-t|^{\alpha_2},
\end{equation}
where $\alpha_2\in\, ]0, (2-\beta) [$.

By applying H\"older's and Cauchy--Schwarz's inequalities along with
\eqref{s3.44}, we get
%
\begin{eqnarray}
&& \E \bigl( \bigl |Q^4(t,\bar{t},x) \bigr |^{{\trup{p}{2}}} \bigr)
\nonumber
\\
&&\quad \le \Bigl(\sup_{0\le s\le t\le\bar{t}\le T}\nu_4(s,t,\bar {t})
\Bigr)^{{\trup{p}{2}}-1} \int_0^t \mathrm{d}s\int
_{\R^3} \int_{\R^3} G(t-s,\mathrm{d}u)G(t-s,
\mathrm{d}v)
\nonumber
\\
&&\qquad{} \times \biggl\{ \biggl(\frac{\bar{t}-s}{t-s} \biggr)^2 f \biggl(
\frac{\bar{t}-s}{t-s}(v-u) \biggr) -\frac{\bar{t}-s}{t-s} f \biggl(v\frac{\bar{t}-s}{t-s}-u
\biggr)
\nonumber
\\
&&\phantom{\qquad{} \times\biggl\{} {} -  \frac{\bar{t}-s}{t-s} f \biggl(v-u
\frac{\bar
{t}-s}{t-s} \biggr)+f(v-u) \biggr\}
\nonumber
\\
\label{s3.541}&&\qquad{} \times \bigl[\E \bigl( \bigl |D_n(s,x-u)
\bigr |^p1_{L_n(s)} \bigr) \bigr]^{{\trup{1}{2}}} \bigl[\E \bigl(
\bigl |D_n(s,x-v) \bigr |^p \bigr) \bigr]^{{\trup
{1}{2}}}
\\
&&\quad \le C \Bigl(\sup_{0\le s\le t\le\bar{t}\le T}\nu_4(s,t,\bar {t})
\Bigr)^{{\trup{p}{2}}}
\nonumber
\\
\label{s3.55}&&\quad \le C |\bar{t}-t|^{\alpha_2 p /2},
\end{eqnarray}
with $\alpha_2\in\, ]0, (2-\beta) [$.

The inequalities \eqref{s3.50}, \eqref{s3.52}, \eqref{s3.53}, \eqref
{s3.55}, together with \eqref{s3.48} imply \eqref{s3.47}.
\end{pf}
%

\begin{pf*}{Proof of Proposition~\ref{pss3.2.1}}
Fix $0\le t\le\bar{t}\le T$, $x\in K$, $p\in[2,\infty[$, and
according to \eqref{s3.6} consider the decomposition
\[
\E \bigl(\bigl |X_n(\bar{t},x)-X_n(t,x) \bigr |^p
1_{L_n(\bar{t})} \bigr) \le C \sum_{i=1}^6
R_n^i(t,\bar{t},x),
\]
where
\begin{eqnarray*}
R_n^1(t,\bar{t},x) &=&\E \biggl(\biggl\llvert \int
_0^{\bar{t}}\int_{\R^3} \bigl[G(
\bar {t}-s,x-y)-G(t-s,x-y) \bigr]
\\
&&\phantom{\E\bigl(|} {}   \times A\bigl(X_n(s,y)\bigr)
M(\mathrm{d}s,\mathrm{d}y) \biggr\rrvert ^p1_{L_n(\bar
{t})} \biggr),
\\
R_n^2(t,\bar{t},x) &=&\E {\bigl(} \bigl | \bigl\langle
\bigl[G(\bar{t}-\cdot,x-\ast )-G(t-\cdot,x-\ast) \bigr] B\bigl(X_n^-(
\cdot,\ast)\bigr),w^n \bigr\rangle _{\mathcal{H}_{\bar t}}
\bigr |^p1_{L_n(\bar{t})} {\bigr)},
\\
R_n^3(t,\bar{t},x) &=&\E \bigl( \bigl | \bigl\langle \bigl[G(\bar{t}-
\cdot,x-\ast )-G(t-\cdot,x-\ast) \bigr]
\\
&&\phantom{\E\bigl(|\bigl\langle} {}  \times\bigl[B(X_n)-B
\bigl(X_n^-\bigr)\bigr](\cdot,\ast),w^n \bigr\rangle
_{\mathcal{H}_{\bar t}} \bigr |^p1_{L_n(\bar{t})} \bigr),
\\
R_n^4(t,\bar{t},x) &=&\E \bigl( \bigl | \bigl\langle \bigl[G(
\bar{t}-\cdot,x-\ast )-G(t-\cdot,x-\ast) \bigr] D\bigl(X_n(\cdot,\ast)
\bigr),h \bigr\rangle _{\mathcal{H}_{\bar t}} \bigr |^p1_{L_n(\bar{t})} \bigr),
\\
R_n^5(t,\bar{t},x) &=&\E \biggl(\biggl\llvert \int
_0^{\bar{t}} \int_{\R^3} \bigl[G(
\bar {t}-s,x-\mathrm{d}y)-G(t-s,x-\mathrm{d}y) \bigr] b\bigl(X_n(s,y)
\bigr) \,\mathrm{d}s \biggr\rrvert ^p1_{L_n(\bar
{t})} \biggr).
\end{eqnarray*}

Similarly as for the term $R_n^1(t,x,\bar x)$ in the proof of Lemma~\ref{lss3.1.2} (see \eqref{s3.32}), we have
%
\begin{eqnarray}
\label{s3.571}
 R_n^1(t,\bar{t},x) &\le& C\E \biggl(\int
_0^{\bar t} \mathrm{d}s \bigl \| \bigl[G(\bar t-s,x-\ast
)
\nonumber
\\[-8pt]
\\[-8pt]
&&\phantom{C\E \biggl(\int
_0^{\bar t} \mathrm{d}s \bigl \| \bigl[}{}-G(t-s,x-\ast) \bigr] A\bigl(X_n(s,\ast)\bigr)\bigr  \|_{\mathcal{H}}^2
1_{L_n(s)} \biggr)^{{\trup{p}{2}}}.\nonumber
\end{eqnarray}
This is bounded up to a positive constant by $R_n^{1,1}(t,\bar
{t},x)+R_n^{1,2}(t,\bar{t},x)$, where
%
\begin{eqnarray}
\label{s3.58} R_n^{1,1}(t,\bar{t},x) &=&\E \biggl(\biggl
\llvert \int_t^{\bar{t}} \bigl \| G(\bar{t}-s,x-\ast) A
\bigl(X_n(s,\ast)\bigr) \bigr \|_{\mathcal{H}}^2
1_{L_n(s)}\,\mathrm{d}s \biggr\rrvert \biggr)^{{\trup{p}{2}}}
\nonumber
\\[-8pt]
\\[-8pt]
&=&\E \biggl(\biggl\llvert \int_0^{\bar{t}-t} \bigl \|
G(s,x-\ast) A\bigl(X_n(\bar t-s,\ast)\bigr) \bigr \|_{\mathcal{H}}^2
1_{L_n(s)} \,\mathrm{d}s \biggr\rrvert \biggr)^{{\trup{p}{2}}}
\nonumber
\end{eqnarray}
and
%
\begin{eqnarray}
\label{s3.581} R_n^{1,2}(t,\bar{t},x) &=&\E \biggl(\biggl
\llvert \int_0^t \mathrm{d}s \bigl\llVert
\bigl[G(\bar t-s,x-\ast )
\nonumber
\\[-8pt]
\\[-8pt]
&&\phantom{\E \biggl(\biggl
\llvert \int_0^t \mathrm{d}s \bigl\llVert
\bigl[}{}-G(t-s,x-\ast)\bigr]A\bigl(X_n(s,\ast)\bigr)
\bigr\rrVert ^2_{\mathcal{H}} 1_{L_n(s)}\biggr\rrvert
\biggr)^{{\trup{p}{2}}}.\nonumber
\end{eqnarray}

Set
\[
\mu_1(t,\bar{t},x):=\int_0^{\bar{t}-t}
\mathrm{d}s\int_{\R^3} \mathrm{d}\xi\bigl \vert \mathcal{F}G(s) (\xi)
\bigr \vert^2 \mu(\mathrm{d}\xi).
\]
Lemma~2.2 in \cite{dss} shows that
%
\begin{equation}
\label{s3.59} \mu_1(t,\bar{t},x)\le C|\bar{t}-t|^{3-\beta}.
\end{equation}
Then, using H\"older's inequality, the linear growth of $A$ and \eqref
{s4.18}, we obtain
%
\begin{eqnarray}
\label{s3.591}R_n^{1,1}(t,\bar{t},x) &\le& C \bigl(
\mu_1(t,\bar{t},x) \bigr)^{\trup{p}{2}} \Bigl(1+ \sup
_{(t,x)\in[0,T]\times\R^3}\E \bigl( \bigl | X_n(t,x) \bigr |^p1_{L_n(t)}
\bigr) \Bigr)
\\
\label{s3.60}&\le& C |\bar{t}-t|^{p{\trup{(3-\beta)}{2}}}.
\end{eqnarray}

Set $D_n(t,x)=A(X_n(t,x)) 1_{L_n(t)}$. Owing to Hypothesis~\ref{HypB}, \eqref{s4.18} and Proposition~\ref{pss3.1.1}, the conditions
\eqref{s3.44}, \eqref{s3.45} of Lemma~\ref{lss3.2.1}
are satisfied with $\rho_1\in\, ]0,\frac{2-\beta}{2} [$. Thus,
%
\begin{equation}
\label{s3.61} R_n^{1,2}(t,\bar{t},x)\le C \bigl( |
\bar{t}-t|^{\rho_1 p} + |\bar {t}-t|^{(\rho_1+\alpha_1){\trup{p}{2}}} + |\bar{t}-t|^{\alpha_2 p
/2}
\bigr),
\end{equation}
with $\rho_1 \in\, ]0,\frac{2-\beta}{2} [$, $\alpha_1\in\,]0,1\wedge(2-\beta)[$ and $\alpha_2\in\, ]0,(2-\beta) [$.

It is easy to check that $\frac{2-\beta}{2}+(1\wedge(2-\beta))\ge
(2-\beta)$. Hence, from \eqref{s3.61} we obtain
%
\begin{equation}
\label{s3.62} R_n^{1,2}(t,\bar{t},x) \le C |
\bar{t}-t|^{\rho p},\qquad \rho\in\, \biggl]0,\frac{2-\beta}{2} \biggr[.
\end{equation}

Since $\frac{3-\beta}{2}\ge\frac{2-\beta}{2}$, \eqref{s3.60} and
\eqref{s3.62} imply
%
\begin{equation}
\label{s3.63} R_n^1(t,\bar{t},x)\le C|
\bar{t}-t|^{\rho p },\qquad \rho\in\, \biggl]0,\frac
{2-\beta}{2}\biggr [.
\end{equation}


With the same arguments as those applied in the study of the term
$R_n^2(t,x,\bar x)$ in the proof of Lemma~\ref{lss3.1.2}, we have
\[
R_n^2(t,\bar{t},x) \le C \E \biggl(\int
_0^{\bar t} \mathrm{d}s \bigl \Vert\bigl[G(\bar t-s,x+\ast
)-G(t-s,x-\ast)\bigr] B\bigl(X_n^-(s,\ast)\bigr)\bigr \Vert_{\mathcal{H}}^2
1_{L_n(s)} \biggr)^{\trup{p}{2}}.
\]
This yields $R_n^2(t,\bar{t},x)\le C(R_n^{2,1}(t,\bar
{t},x)+R_n^{2,2}(t,\bar{t},x))$,
where
\begin{eqnarray*}
R_n^{2,1}(t,\bar{t},x)& =& \E \biggl(\int
_0^t \mathrm{d}s \bigl \Vert\bigl[G(\bar t-s,x+
\ast)-G(t-s,x-\ast)\bigr] B\bigl(X_n^-(s,\ast)\bigr)
\bigr \Vert_{\mathcal{H}}^2 1_{L_n(s)} \biggr)^{\trup{p}{2}},
\\
R_n^{2,2}(t,\bar{t},x)& =& \E \biggl(\int
_0^{\bar t-t} \bigl \Vert G(s,x-\ast)B\bigl(X_n^-(s,
\ast)\bigr)\bigr \Vert_{\mathcal{H}}^2 1_{L_n(s)}
\biggr)^{\trup{p}{2}}.
\end{eqnarray*}

The term $R_n^{2,1}(t,\bar{t},x)$ is similar as $R_n^{1,2}(t,\bar
{t},x)$, with $A(X_n)$ replaced by $B(X_n^-)$. Hence both can be
studied using the same approach.
First, we see that the process $D_n(t,x):=B(X_n^-(t,x)) 1_{L_n(t)}$
satisfies the hypothesis of Lemma~\ref{lss3.2.1} with $\rho_1\in\,
]0,\frac{2-\beta}{2} [$. In fact, this is a consequence of \eqref
{s4.18} and Proposition~\ref{pss3.1.2}. Therefore, as for
$R_n^{1,2}(t,\bar{t},x)$, we have
%
\begin{equation}
\label{s3.633} R_n^{2,1}(t,\bar{t},x)\le C |
\bar{t}-t|^{\rho p},\qquad \rho\in\, \biggl]0,\frac{2-\beta}{2}\biggr [.
\end{equation}

As for $R_n^{2,2}(t,\bar{t},x)$, it is analogous to $R_n^{1,1}$ with
$A(X_n)$ replaced by $B(X_n^-)$. As in \eqref{s3.60}, we have
%
\begin{equation}
\label{s3.64} R_n^{2,2}(t,\bar{t},x) \le C |
\bar{t}-t|^{ p{\trup{(3-\beta)}{2}} }.
\end{equation}
Consequently, from \eqref{s3.633}, \eqref{s3.64}, we obtain
%
\begin{equation}
\label{s3.640} R_n^2(t,\bar{t},x) \le C |
\bar{t}-t|^{\rho p },\qquad \rho\in\, \biggl]0,\frac{2-\beta}{2} \biggr[.
\end{equation}


Let $\hat{B}(X_n(\cdot,\ast))$ be defined by \eqref{s3.351}. Using
Cauchy--Schwarz's inequality and \eqref{s3.101} we have
%
\begin{equation}
\label{s3.641} R_n^3(t,\bar{t},x)\le C
n^{\trup{3p}{2}}2^{n{\trup{p}{2}}} \bigl[R_n^{3,1}(t,
\bar{t},x)+R_n^{3,2}(t,\bar{t},x) \bigr],
\end{equation}
where
\begin{eqnarray*}
R_n^{3,1}(t,\bar{t},x) &=&\E \biggl(\biggl\llvert \int
_0^t \mathrm{d}s \bigl\llVert \bigl[G(
\bar{t}-s,x-\ast )-G(t-s,x-\ast)\bigr] \hat{B}\bigl(X_n(s,\ast)\bigr)
\bigr\rrVert ^2_{\mathcal
{H}}1_{L_n(s)}\biggr\rrvert
\biggr)^{\trup{p}{2}},
\\
R_n^{3,2}(t,\bar{t},x)&=&\E \biggl( \biggl\llvert \int
_0^{\bar{t}-t} \mathrm{d}s\bigl\llVert G(s,x-\ast)
\hat{B}\bigl(X_n(\bar t-s,\ast)\bigr)\bigr\rrVert ^2_{\mathcal{H}}1_{L_n(s)}
\biggr\rrvert \biggr)^{\trup{p}{2}}.
\end{eqnarray*}

From \eqref{s4.19}, it follows that
%
\begin{equation}
\label{s3.65} \sup_{(t,x)\in[0,T\times\IR^3]} \E \bigl( \bigl |\hat {B}
\bigl(X_n(t,x)\bigr)\bigr  |^p1_{L_n(t)} \bigr)\le C
n^{\trup
{3p}{2}}2^{-np{\trup{(3-\beta)}{2}}}.
\end{equation}

Let us study $R_n^{3,2}(t,\bar{t},x)$. This term is similar to
$R_n^{1,1}(t,\bar{t},x)$ with $A(X_n)$ replaced here by $\hat
{B}(X_n)$. Hence, as in \eqref{s3.591} we have
%
\begin{eqnarray}
\label{s3.66} R_n^{3,2}(t,\bar{t},x)&\le& \bigl(
\mu_1(t,\bar{t},x) \bigr)^{\trup
{p}{2}} \Bigl(\sup
_{(t,x)\in[0,T]\times\IR^3}\E \bigl( \bigl |\hat {B}\bigl(X_n(t,x)\bigr)
\bigr |^p1_{L_n(t)} \bigr) \Bigr)
\nonumber
\\[-8pt]
\\[-8pt]
&\le& C |\bar{t}-t|^{p{\trup{(3-\beta)}{2}}} n^{\trup
{3p}{2}}2^{-np{\trup{(3-\beta)}{2}}},
\nonumber
\end{eqnarray}
where in the last inequality we have applied \eqref{s3.65}.

The analysis of $R_n^{3,1}$ relies on a variant of Lemma~\ref
{lss3.2.1} where the process $D_n$ is replaced by $\hat{B}(X_n)$. By
\eqref{s3.65}, this process satisfies a stronger assumption than
\eqref{s3.44}. This fact is expected to compensate the factor
$n^{\trup{3p}{2}}2^{n{\trup{p}{2}}}$ in \eqref{s3.641}.

As in the proof of Lemma~\ref{lss3.2.1} (see also \cite{dss}, page
28), we consider the decomposition
\[
R_n^{3,1}(t,\bar{t},x)\le\sum
_{k=1}^4 \E \bigl(\bigl |Q^i(t,\bar
{t},x)\bigr |^{{\trup{p}{2}}}1_{L_n(\bar{t})} \bigr),
\]
where $Q^i(t,\bar{t},x)$, $i=1,\ldots,4$, are defined in \eqref
{s3.481} with $D_n:=\hat{B}(X_n) 1_{L_n}$.

From \eqref{s3.65} and the triangular inequality, we obtain
%
\begin{equation}
\label{s3.68} \E \biggl(\biggl\llvert \hat{B} \biggl(X_n
\biggl(s,x-\frac{\bar
{t}-s}{t-s}u \biggr) \biggr)-\hat{B}\bigl(X_n(s,x-u)
\bigr) \biggr\rrvert ^p 1_{L_n(s)} \biggr) \le C
n^{\trup{3p}{2}} 2^{-np (3-\beta)/ 2}.
\end{equation}
Consider the expression \eqref{s3.491} with $D_n=\hat
{B}(X_n)1_{L_n}$. The above estimate \eqref{s3.68} yields
\begin{eqnarray*}
&& \E \bigl( \bigl |Q^1(t,\bar{t},x)\bigr  |^{{\trup{p}{2}}}1_{L_n(\bar
{t})}
\bigr)
\\
&&\quad \le C n^{\trup{3p}{2}}2^{-np{\trup{(3-\beta)}{2}}} \int_0^t
\mathrm{d}s\int_{R^3} \int_{R^3} G(t-s,
\mathrm{d}u)G(t-s,\mathrm{d}v)\frac{\bar
{t}-s}{t-s} f \biggl(v\frac{\bar{t}-s}{t-s}-u
\biggr).
\end{eqnarray*}
Along with \eqref{s3.49}, this implies
%
\begin{equation}
\label{s3.69} \E \bigl( \bigl |Q^1(t,\bar{t},x)\bigr  |^{{\trup{p}{2}}}1_{L_n(\bar
{t})}
\bigr)\le C n^{\trup{3p}{2}}2^{-np{\trup{(3-\beta)}{2}}}.
\end{equation}


Consider the expression \eqref{s3.511} with $D_n=\hat{B}(X_n)
1_{L_n}$. Using \eqref{s3.351}, the Lipschitz property of $B$ and
\eqref{s4.19}, we obtain
%
\begin{equation}
\label{s3.70} \E \bigl( \bigl |Q^2(t,\bar{t},x)\bigr  |^{{\trup{p}{2}}}1_{L_n(\bar
{t})}
\bigr) \le Cn^{\trup{3p}{2}}2^{-np\trup{(3-\beta)}{2}}.
\end{equation}

Similarly,
%
\begin{equation}
\label{s3.71} \E \bigl( \bigl |Q^3(t,\bar{t},x)\bigr  |^{{\trup{p}{2}}}1_{L_n(\bar
{t})}
\bigr) \le Cn^{\trup{3p}{2}}2^{-np\trup{(3-\beta)}{2}}.
\end{equation}

Let us now consider the expression \eqref{s3.541} with $D_n=\hat
{B}(X_n) 1_{L_n}$. Appealing to \eqref{s3.65}, we obtain
%
\begin{equation}
\label{s3.72} \E \bigl( \bigl |Q^4(t,\bar{t},x)\bigr  |^{{\trup{p}{2}}}1_{L_n(\bar
{t})}
\bigr) \le C n^{\trup{3p}{2}}2^{-np\trup{(3-\beta)}{2}}.
\end{equation}


From \eqref{s3.69}--\eqref{s3.72} it follows that
%
\begin{equation}
\label{s3.73} R_n^{3,1}(t,\bar t,x) \le
Cn^{\trup{3p}{2}}2^{-np \trup{(3-\beta
)}{2} },
\end{equation}
where $C$ is a finite constant.

Set $f_n:=n^{3p}2^{-np (\trup{(3-\beta)}{2}-\trup{1}{2} )}$.
From \eqref{s3.641}, \eqref{s3.66}, \eqref{s3.73}, it follows that\vspace*{-1pt}
%
\begin{equation}
\label{s3.74} R_n^3(t,\bar{t},x)\le C |\bar{t}-t
|^{\rho p}+Cf_n,\qquad \rho \in\, \biggl]0,\frac{2-\beta}{2} \biggr[.
\end{equation}
%

By applying Cauchy--Schwarz's inequality, we see that\vspace*{-1pt}
\[
R_n^4(t,x,\bar{x}) \le C\E \biggl(\int
_0^{\bar t} \mathrm{d}s \bigl\llVert \bigl[G(\bar
t-s,x-\ast )-G(t-s,x-\ast)\bigr] D\bigl(X_n(s,\ast)\bigr)\bigr\rrVert
_{\mathcal
{H}}^21_{L_n(s)} \biggr)^{\trup{p}{2}}.
\]
The last expression is similar as \eqref{s3.571} with the function $A$
replaced by $D$. Therefore, as in \eqref{s3.63} we obtain\vspace*{-1pt}
%
\begin{equation}
\label{s3.75} R_n^4(t,\bar{t},x)\le C |
\bar{t}-t|^{\rho p },\qquad \rho\in\, \biggl]0,\frac{2-\beta}{2} \biggr[.
\end{equation}


Finally, we consider $R_n^5(t,\bar{t},x)$. Clearly,\vspace*{-1pt}
\[
R_n^5(t,\bar{t},x)\le C \bigl[R_n^{5,1}(t,
\bar{t},x)+R_n^{5,2}(t,\bar {t},x) \bigr],
\]
where\vspace*{-1pt}
\begin{eqnarray*}
R_n^{5,1}(t,\bar{t},x) &:=&\E \biggl(\biggl\llvert \int
_0^t \int_{\R^3} \bigl[G(
\bar{t}-s,x-\mathrm{d}y)-G(t-s,x-\mathrm{d}y)\bigr] b\bigl(X_n(s,y)
\bigr) \,\mathrm{d}s \biggr\rrvert ^p 1_{L_n(\bar{t})} \biggr),
\\
R_n^{5,2}(t,\bar{t},x)&:=&\E \biggl(\biggl\llvert \int
_t^{\bar{t}} \int_{\R
^3} G(
\bar{t}-s,x-\mathrm{d}y) b\bigl(X_n(s,y)\bigr) \,\mathrm{d}s \biggr
\rrvert ^p1_{L_n(\bar{t})} \biggr).
\end{eqnarray*}

Applying the change of variable, $y\mapsto\frac{y-x}{\overline
{t}-s}+ x $ and $y\mapsto\frac{y-x}{t-s}+x$, we see that\vspace*{-1pt}
\[
R_n^{5,1}(t,\bar{t},x)= \E \bigl( \bigl |T_1(t,
\bar{t},x)-T_2(t,\bar {t},x) \bigr |^p 1_{L_n(\bar{t})} \bigr),
\]
where\vspace*{-1pt}
\begin{eqnarray*}
T_1(t,\bar{t},x)&=&\int_0^t (
\bar{t}-s)\int_{\R^3} G(1,x-\mathrm{d}y) b\bigl(X_n
\bigl(s,(\bar{t}-s) (y-x)+x \bigr)\bigr) \,\mathrm{d}s,
\\
T_2(t,\bar{t},x)&=&\int_0^t (t-s)
\int_{\R^3} G(1,x-\mathrm{d}y) b\bigl(X_n
\bigl(s,(t-s) (y-x)+x\bigr)\bigr)\,\mathrm{d}s.
\end{eqnarray*}
By adding and subtracting $t$ in $T_1$ we get\vspace*{-1pt}
\begin{eqnarray*}
T_1(t,\bar{t},x)&=&\int_0^t (
\bar{t}-t) \int_{\R^3} G(1,x-\mathrm{d}y) b
\bigl(X_n\bigl(s,(\bar{t}-s) (y-x)+x \bigr)\bigr) \,\mathrm{d}s
\\
&&{}+\int_0^t (t-s)\int_{\R^3}
G(1,x-\mathrm{d}y) b\bigl(X_n\bigl(s,(\bar{t}-s) (y-x)+x \bigr)\bigr)
\,\mathrm{d}s.
\end{eqnarray*}
Then, H\"older's inequality yields
\begin{eqnarray*}
&&R_n^{5,1}(t,\bar{t},x)
\\
&&\quad\le  C |\bar{t}-t|^p
\int_0^t \mathrm{d}s\int_{\R^3}
G(1,x-\mathrm{d}y)\E \bigl( \bigl |b \bigl(X_n\bigl(s,(\bar{t}-s) (y-x)+x
\bigr) \bigr) \bigr |^p1_{L_n(s)} \bigr)
\\
 &&\qquad {} + C \int_0^t |t-s|^{p}\,
\mathrm{d}s\int_{\R^3} G(1,x-\mathrm{d}y)\E \bigl( \bigl |b
\bigl(X_n\bigl(s,(\bar{t}-s) (y-x)+x \bigr) \bigr)
\\
&&\phantom{\qquad {} + C \int_0^t |t-s|^{p}\,
\mathrm{d}s\int_{\R^3} G(1,x-\mathrm{d}y)\E \bigl( \bigl |} {} -b
\bigl(X_n\bigl(s,(t-s) (y-x)+x \bigr) \bigr) \bigr |^p
1_{L_n(s)} \bigr).
\end{eqnarray*}

Owing to \eqref{s4.18}, the first term on the right hand-side of the
last inequality is bounded up to a constant by $|\bar{t}-t|^p$. For
the second one, we use the Hypothesis~\ref{HypB} along with
\eqref{s3.17} to obtain
\begin{eqnarray*}
&&\int_0^t |t-s|^{p} \,
\mathrm{d}s\int_{\R^3} G(1,x-\mathrm{d}y)
\\
&&\qquad{} \times\E \bigl( \bigl |b \bigl(X_n\bigl(s,(\bar{t}-s) (y-x)+x
\bigr) \bigr)-b \bigl(X_n\bigl(s,(t-s) (y-x)+x \bigr) \bigr)
\bigr |^p 1_{L_n(s)} \bigr)
\\
&&\quad \le C \int_0^t \mathrm{d}s \int
_{\R^3} G(1,x-\mathrm{d}y)
\\
&&\qquad{} \times\E \bigl( \bigl |X_n\bigl(s,(\bar{t}-s) (y-x)+x
\bigr)-X_n\bigl(s,(t-s) (y-x)+x \bigr)\bigr  |^p
1_{L_n(s)} \bigr)
\\
&&\quad \le C |t-\bar t|^{\rho p},
\end{eqnarray*}
with $\rho\in\, ]0,\frac{2-\beta}{2} [$.

H\"older inequality along with \eqref{s4.18} clearly yields
%
\begin{eqnarray}
\label{s3.56} R_n^{5,2}(t,\bar{t},x)&\le& C|
\bar{t}-t|^{p-1} \int_t^{\bar{t}} \int
_{\R^3} G(\bar{t}-s,x-\mathrm{d}y) \E \bigl(\bigl\llvert b
\bigl(X_n(s,y)\bigr) \bigr\rrvert ^p1_{L_n(s)}
\bigr)\,\mathrm{d}s
\nonumber
\\[-8pt]
\\[-8pt]
&\le& C|\bar{t}-t|^{p}.
\nonumber
\end{eqnarray}

Hence, we have proved that
%
\begin{equation}
\label{s3.57} R_n^5(t,\bar{t},x)\le C |\bar{t}-t
|^{\rho p},\qquad \rho\in\, \biggl]0,\frac{2-\beta}{2} \biggr[.
\end{equation}

With the inequalities \eqref{s3.63}, \eqref{s3.640}, \eqref{s3.74},
\eqref{s3.75} and \eqref{s3.57}, we have
\[
\E \bigl(\bigl |X_n(\bar{t},x)-X_n(t,x)\bigr |^p1_{L_n(\bar{t})}
\bigr)\le C \bigl[|\bar{t}-t|^{\rho p } + f_n \bigr],
\]
with $\rho\in\, ]0,\frac{2-\beta}{2} [$.

For a given fixed $\bar t\in[t_0,T]$, we introduce the function
\[
\varPsi_{n,x,p}^{\bar{t}}(t):=\E \bigl(\bigl |X_n(\bar
{t},x)-X_n(t,x)\bigr |^p1_{L_n(\bar{t})} \bigr),
\]
for $t_0\le t\le\bar{t}$.\vadjust{\goodbreak}

Notice that $\lim_{n\to\infty} f_n=0$ and thus, $\sup_n f_n \le C$.
Thus, there exists
a constant $0<C_0<\infty$, such that
\[
\sup_n f_n\le C_0 t_0
\le C_0\bar{t}\le C_0 \int_0^{\bar{t}}
\mathrm{d}s \bigl[1+\varPsi_{n,x,p}^{\bar{t}}(s) \bigr].
\]
With a similar argument, there exists $0<C_1<\infty$ such that
\[
1\le C_1 t_0 \le C_1\bar{t}\le
C_1 \int_0^{\bar{t}} \mathrm{d}s
\bigl[1+\varPsi_{n,x,p}^{\bar{t}}(s) \bigr].
\]
Therefore,
\[
1+ \varPsi_{n,x,p}^{\bar{t}}(t) \le C \biggl\{|
\bar{t}-t|^{\rho p} + \int_0^{\bar{t}}
\mathrm{d}s \bigl[1+\varPsi_{n,x,p}^{\bar{t}}(s) \bigr] \biggr\}.
\]
Then, by Gronwall's lemma,
\[
1+ \varPsi_{n,x,p}^{\bar{t}}(t) \le C \bigl(|\bar{t}-t|^{\rho p}
\bigr),
\]
where $\rho\in\, ]0,\frac{2-\beta}{2} [$. This finish the
proof of the proposition.
\end{pf*}


\subsection{Pointwise convergence}
\label{ss3.3}

This section is exclusively devoted to the proof of Theorem~\ref
{ts3.3}. Using equations \eqref{s3.7}, \eqref{s3.6}, we write the
difference $X_n(t,x)-X(t,x)$ grouped into comparable terms
in order to prove their convergence to zero. The main difficulty lies
in the proof of the convergence of $\langle G(t-\cdot,x-\ast)
B(X_n(\cdot,\ast)),w^n\rangle_{\hact}$ to
$\int_0^t\int_{\R^3} B(X(s,y)) M(\mathrm{d}s,\mathrm{d}y)$. We write
\[
X_n(t,x)-X(t,x)=\sum_{i=1}^8
U_n^i(t,x),
\]
where
\begin{eqnarray*}
U_n^1(t,x) &=& \int_0^t
\int_{\R^3} G(t-s,x-y) \bigl[(A+B) \bigl(X_n(s,y)
\bigr)-(A+B) \bigl(X(s,y)\bigr) \bigr] M(\mathrm{d}s,\mathrm{d}y),
\\
U_n^2(t,x) &=& \bigl\langle G(t-\cdot,x-\ast)\bigl[D
\bigl(X_n(\cdot,\ast)\bigr)-D\bigl(X(\cdot ,\ast)\bigr)\bigr], h \bigr
\rangle_{\mathcal{H}_t},
\\
U_n^3(t,x) &=& \int_0^t
\mathrm{d}s\int_{\R^3} G(t-s,x-\mathrm{d}y)\bigl[b
\bigl(X_n(s,y)\bigr)-b\bigl(X(s,y)\bigr)\bigr],
\\
U_n^4(t,x) &=& \bigl\langle G(t-\cdot,x-\ast)\bigl[B
\bigl(X_n(\cdot,\ast )\bigr)-B\bigl(X_n^-(\cdot,\ast)
\bigr)\bigr], w^n \bigr\rangle_{\mathcal{H}_t},
\\
U_n^5(t,x) &=& \bigl\langle G(t-\cdot,x-\ast)\bigl[B
\bigl(X_n^-(\cdot,\ast )\bigr)-B\bigl(X^-(\cdot,\ast)\bigr)\bigr],
w^n \bigr\rangle_{\mathcal{H}_t},
\\
U_n^6(t,x) &=& \bigl\langle G(t-\cdot,x-\ast)B
\bigl(X^-(\cdot,\ast)\bigr), w^n \bigr\rangle_{\mathcal{H}_t}
\\
&&{} - \int_0^t\int_{\R^3}
G(t-s,x-y)B\bigl(X^-(s,y)\bigr) M(\mathrm{d}s,\mathrm{d}y),
\\
U_n^7(t,x) &=& \int_0^t
\int_{\R^3} G(t-s,x-y)\bigl[B\bigl(X^-(s,y)\bigr)-B
\bigl(X_n^-(s,y)\bigr)\bigr] M(\mathrm{d}s,\mathrm{d}y),
\\
U_n^8(t,x) &=& \int_0^t
\int_{\R^3} G(t-s,x-y)\bigl[B\bigl(X_n^-(s,y)
\bigr)-B\bigl(X_n(s,y)\bigr)\bigr] M(\mathrm{d}s,\mathrm{d}y).
\end{eqnarray*}
Here, we have used the abridged notation $X^-(\cdot,\ast)$ for the
stochastic process $X^-(t,x):=X(t,t_n,x)$ defined in \eqref{s3.8.3}.
Notice that, although this is not apparent in the notation $X^-(\cdot
,\ast)$ does depend on $n$.

Fix $p\in[2,\infty[$. Clearly,
\[
\E \bigl(\bigl\llvert X_n(t,x)-X(t,x)\bigr\rrvert
^p1_{L_n(t)} \bigr)\le C\sum_{i=1}^8
\E \bigl(\bigl\llvert U_n^i(t,x)\bigr\rrvert
^p1_{L_n(t)} \bigr).
\]
Next, we analyze the contribution of each term $U_n^i(t,x)$,
$i=1,\ldots,8$.

Burkholder's and H\"older's inequalities yield
%
\begin{equation}
\label{s3.76} \E \bigl(\bigl\llvert U_n^1(t,x)\bigr
\rrvert ^p1_{L_n(t)} \bigr)\le C \int_0^t
\mathrm{d}s \Bigl[\sup_{y \in K(s)} \E \bigl( \bigl |X_n(s,y)-X(s,y)
\bigr |^{p} 1_{L_n(s)} \bigr) \Bigr].
\end{equation}
Cauchy--Schwarz's inequality implies
\[
\E \bigl(\bigl\llvert U_n^2(t,x)\bigr\rrvert
^p1_{L_n(t)} \bigr) \le \|h\|_{\mathcal{H}_t}^p\E
\bigl(\bigl\llVert G(t-\cdot,x-\ast) \bigl[D\bigl(X_n(\cdot,\ast)
\bigr)-D\bigl(X(\cdot,\ast)\bigr)\bigr]1_{L_n(t)}\bigr\rrVert
_{\mathcal
{H}_t}^2 \bigr)^{{\trup{p}{2}}}.
\]
Then, by using H\"older's inequality we obtain
%
\begin{equation}
\label{s3.77} \E \bigl(\bigl\llvert U_n^2(t,x)\bigr
\rrvert ^p1_{L_n(t)} \bigr)\le C \int_0^t
\mathrm{d}s \Bigl[\sup_{y \in K(s)} \E \bigl( \bigl |X_n(s,y)-X(s,y)
\bigr |^{p} 1_{L_n(s)} \bigr) \Bigr].
\end{equation}

For $U_n^3(t,x)$, we apply H\"older's inequality. This yields
%
\begin{equation}
\label{s3.78} \E \bigl(\bigl\llvert U_n^3(t,x)\bigr
\rrvert ^p1_{L_n(t)} \bigr)\le C \int_0^t
\mathrm{d}s \Bigl[\sup_{y \in K(s)} \E \bigl( \bigl |X_n(s,y)-X(s,y)
\bigr |^{p} 1_{L_n(s)} \bigr) \Bigr].
\end{equation}

Let $\tau_n$ and $\pi_n$ be the operators defined in the proof of
Lemma~\ref{lss3.1.2} (see \eqref{s3.34} and lines thereafter).
Let $I_{\mathcal{H}_t}$ be the identity operator on $\mathcal{H}_t$.
$\Upsilon_t:= (\pi_n \circ\tau_n )
-I_{\mathcal{H}_t}$ is a contraction operator on~$\mathcal{H}_t$.

After having applied Burkholder's inequality, we obtain
\begin{eqnarray*}
&&\E \bigl[ \bigl(\bigl\llvert U_n^5(t,x) +
U_n^7(t,x)\bigr\rrvert ^p
\bigr)1_{L_n(t)} \bigr]
\\
&&\quad \le C \E \bigl( \bigl \| \Upsilon_t \bigl[G(t-\cdot,x-\ast)\bigl
\{B\bigl(X_n^-\bigr)-B\bigl(X^-\bigr)\bigr\} \bigr](\cdot,\ast
)1_{L_n(\cdot)}\bigr  \|_{\mathcal{H}_t}^p \bigr)
\\
&&\quad \le C \E \biggl(\int_0^t\mathrm{d}s
\bigl\llVert\bigl [G(t-s,x-\ast)\bigl\{B\bigl(X_n^-\bigr)-B\bigl(X^-\bigr)
\bigr\}\bigr] (s,\ast)1_{L_n(s)} \bigr\rrVert _{\mathcal{H}}^2
\biggr)^{{\trup{p}{2}}}.
\end{eqnarray*}
Similarly as for $U_n^2(t,x)$, we have
%
\begin{eqnarray}
\label{s3.79}
&&\E \bigl[ \bigl(\bigl\llvert U_n^5(t,x)
+ U_n^7(t,x)\bigr\rrvert ^p
\bigr)1_{L_n(t)} \bigr]
\nonumber
\\[-8pt]
\\[-8pt]
&&\quad \le C \int_0^t
\mathrm{d}s \Bigl[ \sup_{y\in K(s)}\E \bigl( \bigl |X_n^-(s,y)-X^-(s,y)
\bigr |^p 1_{L_n(s)} \bigr) \Bigr].\nonumber
\end{eqnarray}
This clearly implies
\begin{eqnarray*}
\E \bigl[ \bigl(\bigl\llvert U_n^5(t,x) +
U_n^7(t,x)\bigr\rrvert ^p
\bigr)1_{L_n(t)} \bigr] &\le& C \biggl\{\int_0^t
\mathrm{d}s \Bigl[ \sup_{y\in K(s)}\E \bigl( \bigl |X_n^-(s,y)-X_n(s,y)
\bigr |^p 1_{L_n(s)} \bigr) \Bigr]
\\
&&\phantom{C \biggl\{} {}   +\int_0^t
\mathrm{d}s \Bigl[ \sup_{y\in K(s)}\E \bigl( \bigl |X_n(s,y)-X(s,y)
\bigr |^p 1_{L_n(s)} \bigr) \Bigr]
\\
&&\phantom{C \biggl\{} {}   + \int_0^t
\mathrm{d}s \Bigl[ \sup_{y\in K(s)}\E \bigl( \bigl |X(s,y)-X^-(s,y)
\bigr |^p 1_{L_n(s)} \bigr) \Bigr] \biggr\}.
\end{eqnarray*}
Recall that $X^-(s,y)=X(s,s_n,y)$. By applying \eqref{s4.4} and \eqref
{s4.19}, we obtain
%
\begin{eqnarray}
\label{s3.80} \E \bigl[ \bigl(\bigl\llvert U_n^5(t,x)
+ U_n^7(t,x)\bigr\rrvert ^p
\bigr)1_{L_n(t)} \bigr] &\le& C \int_0^T
\mathrm{d}s \Bigl[ \sup_{y\in K(s)}\E \bigl( \bigl |X_n(s,y)-X(s,y)
\bigr |^p 1_{L_n(s)} \bigr) \Bigr]\qquad
\nonumber
\\[-8pt]
\\[-8pt]
&&{} + Cn^{\trup{3p}{2}}2^{-np{\trup{(3-\beta)}{2}}}.
\nonumber
\end{eqnarray}

Next, we will prove that
%
\begin{equation}
\label{s3.81} \lim_{n\to\infty} \Bigl(\sup_{t\in[0,T]}
\sup_{x\in K(t)}\E \bigl(\bigl\llvert U_n^i(t,x)
\bigr\rrvert ^p1_{L_n(t)} \bigr) \Bigr)=0,\qquad i=4,6,8.
\end{equation}

Consider $i=4$. Cauchy--Schwarz' inequality along with \eqref{s3.101} implies
\begin{eqnarray*}
&&\E \bigl(\bigl\llvert U_n^4(t,x)\bigr\rrvert
^p1_{L_n(t)} \bigr)
\\
&&\quad\le Cn^{\trup{3p}{2}}2^{n{\trup{p}{2}}} \E
\biggl(\int_0^t \mathrm{d}s\bigl \| G(t-s,x-\ast)
\bigl[B(X_n)-B\bigl(X_n^-\bigr)\bigr](s,
\ast)1_{L_n(s)} \bigr \|_{\mathcal{H}}^2 \biggr)^{{\trup{p}{2}}}.
\end{eqnarray*}
Then, the Lipschitz continuity of $B$ and \eqref{s4.19} yield
\begin{eqnarray*}
\E \bigl(\bigl\llvert U_n^4(t,x)\bigr\rrvert
^p1_{L_n(t)} \bigr) &\le& Cn^{\trup{3p}{2}}2^{n{\trup{p}{2}}}
\int_0^t \mathrm{d}s \Bigl[ \sup
_{y\in\IR^3}\E \bigl( \bigl |X_n(s,y)-X_n^-(s,y)
\bigr |^p 1_{L_n(s)} \bigr) \Bigr]
\\
&\le& C n^{3p}2^{-np [\trup{(3-\beta)}{2}-\trup{1}{2} ]}.
\end{eqnarray*}
Since $\beta\in\,]0,2[$, this implies \eqref{s3.81} for $i=4$.

The arguments based on Burkholder's and H\"older's inequalities,
already applied many times, give\vspace*{-2pt}
\begin{eqnarray*}
\E \bigl(\bigl\llvert U_n^8(t,x)\bigr\rrvert
^p1_{L_n(t)} \bigr) &\le& C\int_0^t
\mathrm{d}s \sup_{y\in\IR^3} \E \bigl( \bigl |X_n^-(s,y)-X_n(s,y)
\bigr |^{p} 1_{L_n(s)} \bigr)
\\
&\le& C n^{\trup{3p}{2}}2^{-np\trup{(3-\beta)}{2}},
\end{eqnarray*}
where, in the last inequality we have used \eqref{s4.19}. Thus, \eqref
{s3.81} holds for $i=8$.


Let us now consider the case $i=6$. Define\vspace*{-2pt}
\begin{eqnarray*}
U_n^{6,1}(t,x) &=&\int_0^t
\int_{\R^3} \bigl\{\pi_n \bigl[
\tau_n \bigl[G(t-\cdot ,x-\ast)B\bigl(X^-(\cdot,\ast)\bigr) \bigr]
\\
&&\phantom{\int_0^t
\int_{\R^3} \bigl\{\pi_n \bigl[}{}  -G(t-\cdot,x-\ast)\tau_n \bigl[B\bigl(X^-(\cdot,
\ast )\bigr) \bigr] \bigr](s,y) \bigr\} M(\mathrm{d}s,\mathrm{d}y),
\\
U_n^{6,2}(t,x) &=& \int_0^t
\int_{\R^3} \pi_n \bigl[G(t-\cdot,x-\ast)
\tau_n \bigl[B\bigl(X^-(\cdot,\ast)\bigr) \bigr]
\\[-1pt]
&&\phantom{\int_0^t
\int_{\R^3} \pi_n \bigl[}{}   - G(t-\cdot,x-\ast)B\bigl(X^-(\cdot,\ast)\bigr) \bigr](s,y)M(
\mathrm{d}s,\mathrm{d}y),
\\[-1pt]
U_n^{6,3}(t,x) & =& \int_0^t
\int_{\R^3} \bigl\{ \pi_n \bigl[G(t-\cdot,x-\ast
)B\bigl(X^-(\cdot,\ast)\bigr) \bigr]
\\[-1pt]
&&\phantom{\int_0^t
\int_{\R^3} \bigl\{}{}   -G(t-s,x-y)B\bigl(X^-(s,y)\bigr) \bigr\} M(\mathrm{d}s,
\mathrm{d}y).
\end{eqnarray*}
Clearly,\vspace*{-2pt}
\[
U_n^6(t,x)=U_n^{6,1}(t,x)+U_n^{6,2}(t,x)+U_n^{6,3}(t,x).
\]

To facilitate the analysis, we write $U_n^{6,1}(t,x)$ more explicitly,
as follows\vspace*{-2pt}
%
\begin{eqnarray}
\label{s3.82} U_n^{6,1} (t,x)&=& \int
_0^t\int_{\R^3} \big\{
\pi_n \bigl[G(t-\cdot ,x-\ast)B\bigl(X^-(\cdot,\ast)\bigr) \bigr]
\bigl(\bigl(s+2^{-n}\bigr)\wedge t,y \bigr)
\nonumber
\\[-8pt]
\\[-8pt]
&&\phantom{\int_0^t\int_{\R^3}
\bigl\{} {}   -\pi_n \bigl[G(t-\cdot,x-\ast)B \bigl(X^-
\bigl(\bigl(\cdot +2^{-n}\bigr)\wedge t,\ast \bigr) \bigr) \bigr](s,y)
\bigr\} M(\mathrm{d}s,\mathrm{d}y).
\nonumber
\end{eqnarray}
We are assuming that $t\ge t_0>0$. Hence, for $n$ big enough,
$t-2^{-n}>0$. Consider the first integral on the right-hand side of
\eqref{s3.82}. We have\vspace*{-2pt}
%
\begin{eqnarray}
\label{s3.83} && \int_0^t\int
_{\R^3} \pi_n \bigl[G(t-\cdot,x-\ast)B\bigl(X^-(
\cdot ,\ast)\bigr) \bigr] \bigl(\bigl(s+2^{-n}\bigr)\wedge t,y \bigr)M(
\mathrm{d}s,\mathrm{d}y)
\nonumber
\\[-1pt]
&&\quad =\int_0^{t-2^{-n}}\int_{\R^3}
\pi_n \bigl[G(t-\cdot,x-\ast )B\bigl(X^-(\cdot,\ast)\bigr) \bigr]
\bigl(s+2^{-n},y \bigr)M(\mathrm{d}s,\mathrm{d}y)
\nonumber
\\[-1pt]
&&\qquad{} +\int_{t-2^{-n}}^t\int
_{\R^3} \pi_n \bigl[G(t-\cdot,x-\ast )B\bigl(X^-(
\cdot,\ast)\bigr) \bigr] (t,y )M(\mathrm{d}s,\mathrm{d}y)
\\
&&\quad =\int_0^{t-2^{-n}}\int_{\R^3}
\pi_n \bigl[G(t-\cdot,x-\ast )B\bigl(X^-(\cdot,\ast)\bigr) \bigr]
\bigl(s+2^{-n},y \bigr)M(\mathrm{d}s,\mathrm{d}y)
\nonumber
\\
&&\quad =\int_{2^{-n}}^{t}\int_{\R^3}
\pi_n \bigl[G(t-\cdot,x-\ast )B\bigl(X^-(s,\ast)\bigr)
\bigr](s,y)M(\mathrm{d}s,\mathrm{d}y).
\nonumber
\end{eqnarray}
Indeed, the integral on the domain $[t-2^{-n},t]$ vanishes, and we have
applied the change of variable
$s\mapsto s+2^{-n}$.

For the second integral on the right-hand side of \eqref{s3.82}, we
split the domain of integration of the $s$-variable into three disjoint
sets, as follows:
%
\begin{eqnarray}
\label{s3.84} &&\int_0^t \int
_{\R^3} \pi_n \bigl[G(t-\cdot,x-\ast)B \bigl(X^-
\bigl(\bigl(\cdot+2^{-n}\bigr)\wedge t,\ast \bigr) \bigr) \bigr](s,y) M(
\mathrm{d}s,\mathrm{d}y)
\nonumber
\\
&&\quad= \int_0^{2^{-n}}\int_{\R^3}
\pi_n \bigl[G(t-\cdot,x-\ast)B \bigl(X^- \bigl(
\cdot+2^{-n},\ast \bigr) \bigr) \bigr](s,y) M(\mathrm{d}s,\mathrm{d}y)
\nonumber
\\[-8pt]
\\[-8pt]
&&\qquad{} + \int_{2^{-n}}^{t-2^{-n}}\int
_{\R^3} \pi_n \bigl[G(t-\cdot,x-\ast )B \bigl(X^-
\bigl(\cdot+2^{-n},\ast \bigr) \bigr) \bigr](s,y) M(\mathrm{d}s,
\mathrm{d}y)
\nonumber
\\
&&\qquad{} + \int_{t-2^{-n}}^t\int
_{\R^3} \pi_n \bigl[G(t-\cdot,x-\ast)B \bigl(X^-
(t,\ast ) \bigr) \bigr](s,y) M(\mathrm{d}s,\mathrm{d}y).
\nonumber
\end{eqnarray}
Then \eqref{s3.83}, \eqref{s3.84} yield
\begin{eqnarray*}
U_n^{6,1}(t,x)&=&-\int_0^{2^{-n}}
\int_{\R^3} \pi_n \bigl[G(t-\cdot ,x-\ast)B
\bigl(X^- \bigl(\cdot+2^{-n},\ast \bigr) \bigr) \bigr](s,y) M(
\mathrm{d}s,\mathrm{d}y)
\\
&&{} + \int_{2^{-n}}^{t-2^{-n}} \int_{\R^3}
\pi_n \bigl[G(t-\cdot ,x-\ast) \bigl(B\bigl(X^-(\cdot,\ast)\bigr)
\\
&&\phantom{+ \int_{2^{-n}}^{t-2^{-n}} \int_{\R^3}
\pi_n \bigl[G(t-\cdot ,x-\ast) \bigl(}{}-B
\bigl(X^-\bigl(\cdot+2^{-n},\ast\bigr)\bigr) \bigr) \bigr] (s,y)M(
\mathrm{d}s,\mathrm{d}y)
\\
&&{} -\int_{t-2^{-n}}^t\int_{\R^3}
\pi_n \bigl[G(t-\cdot,x-\ast)B \bigl(X^- (t,\ast ) \bigr)
\bigr](s,y)M(\mathrm{d}s,\mathrm{d}y)
\\
&&{} +\int_{t-2^{-n}}^t \int_{\R^3}
\pi_n \bigl[G(t-\cdot,x-\ast )B \bigl(X^- (\cdot,\ast ) \bigr)
\bigr](s,y)M(\mathrm{d}s,\mathrm{d}y).
\end{eqnarray*}
From this, we see that
$\E (\llvert  U_n^{6,1}(t,x)\rrvert ^p 1_{L_n(t)}
)\le C \sum_{i=1}^4V_n^{6,i}(t,x)$,
where
\begin{eqnarray*}
V_n^{6,1}(t,x)&=&\E \biggl(\biggl\llvert \int
_0^{t-2^{-n}}\int_{\IR
^3}
\pi_n \bigl[G(t-\cdot,x-\ast)
\\
&&\phantom{\E\bigl(\vert\int
_0^{t-2^{-n}}\int_{\IR
^3}} {}   \times \bigl\{B\bigl(X^-(
\cdot,\ast)\bigr)-B\bigl(X^-\bigl(\cdot +2^{-n},\ast\bigr)\bigr) \bigr\}
\bigr](s,y)M(\mathrm{d}s,\mathrm{d}y)\biggr\rrvert 1_{L_n(t)}
\biggr)^p,\\
V_n^{6,2}(t,x)&=&\E \biggl(\biggl\llvert \int
_0^{2^{-n}}\int_{\IR^3}\pi_n \bigl[G(t-\cdot,x-\ast)B\bigl(X^-(\cdot,\ast)\bigr) \bigr](s,y) M(
\mathrm{d}s,\mathrm{d}y)\biggr\rrvert \biggr)^p,
\\
V_n^{6,3}(t,x)&=&\E \biggl(\biggl\llvert \int
_{t-2^{-n}}^t\pi_n \bigl[G(t-\cdot,x-
\ast)B\bigl(X^-(t,\ast)\bigr) \bigr](s,y) M(\mathrm{d}s,\mathrm{d}y)\biggr\rrvert
\biggr)^p,
\\
V_n^{6,4}(t,x)&=&\E \biggl(\biggl\llvert \int
_{t-2^{-n}}^t\pi_n \bigl[G(t-\cdot,x-
\ast)B\bigl(X^-(\cdot,\ast)\bigr) \bigr](s,y) M(\mathrm{d}s,\mathrm{d}y)\biggr
\rrvert \biggr)^p.
\end{eqnarray*}
By Burkholder's and H\"older's inequalities, we have
\begin{eqnarray*}
V_n^{6,1}(t,x)&\le& C \int_0^t
\mathrm{d}s \sup_{y\in\IR^3} \E \bigl(\bigl\llvert X^-(s,y)-X^-
\bigl(s+2^{-n},y\bigr)\bigr\rrvert ^p1_{L_n(t)}
\bigr)
\\
&=&C \int_0^t \mathrm{d}s \sup
_{y\in\IR^3} \E \bigl(\bigl\llvert X(s,s_n,y)-X^-
\bigl(s+2^{-n},s_n+2^{-n},y\bigr)\bigr\rrvert
^p1_{L_n(t)} \bigr) \le C 2^{-np\rho},
\end{eqnarray*}
with $\rho\in\, ]0,\frac{2-\beta}{2} [$. Indeed, the last
inequality is obtained by using the triangular inequality along with
\eqref{s4.4} and \eqref{s3.43}.

For $s\in[0,2^{-n}]$, $X^-(s,y)=X(s,s_n,y)=0$. Therefore,
\[
V_n^{6,2}(t,x)\le C \biggl(\int_0^{2^{-n}}
\mathrm{d}s \int_{\R^3} \mu(\mathrm{d}\xi ) \bigl |\mathcal{F}G
(t-s,\ast) (\xi)\bigr |^2 \biggr)^{\trup{p}{2}} \le C 2^{-\trup{np}{2}},
\]
where in the last inequality we have used the property
\[
\int_{\IR^3}\mu(\mathrm{d}\xi)\bigl \vert\mathcal{F}\bigl(G(r,\ast)
\bigr) (\xi)\bigr \vert ^2=Cr^{2-\beta}.
\]
In a rather similar way,
\begin{eqnarray*}
&& V_n^{6,3}(t,x)+V_n^{6,4}(t,x)
\\
&&\quad \le C \Bigl(1+\sup_{(t,x)\in[0,T]\times\IR^3}\E\bigl\llvert
X(t,t_n,x)\bigr\rrvert ^p \Bigr) \biggl(\int
_0^{2^{-n}}\mathrm{d}s \int_{\IR^3}
\mu(\mathrm{d}\xi)\bigl \vert\mathcal {F}\bigl(G(s,\ast)\bigr) (\xi)\bigr \vert^2
\biggr)^{\trup{p}{2}}
\\
&&\quad \le C 2^{-np\trup{(3-\beta)}{2}}.
\end{eqnarray*}

Thus, we have established the convergence
%
\begin{equation}
\label{s3.85} \lim_{n\to\infty} \sup_{(t,x)\in[0,T]\times K(t)} \E
\bigl(\bigl\llvert U_n^{6,1}(t,x)\bigr\rrvert
^p1_{L_n(t)} \bigr)=0.
\end{equation}
Next, we consider the term $U_n^{6,2}(t,x)$. As usually for these type
of terms, we apply Burkholder's and then H\"older's inequalities, along
with the contraction property of the projection\vadjust{\goodbreak} $\pi_n$. This yields,
\begin{eqnarray*}
&& \E \bigl(\bigl\llvert U_n^{6,2}(t,x)\bigr\rrvert
^p 1_{L_n(t)} \bigr)
\\
&&\quad =\E \biggl(\biggl\llvert \int_0^t\int
_{\R^3}\pi_n \bigl[G(t-\cdot ,x-\ast) \bigl\{B
\bigl(X^-\bigl( \bigl(\cdot+2^{-n}\bigr)\wedge t,\ast\bigr)\bigr) - B
\bigl(X^-(\cdot,\ast )\bigr) \bigr\} \bigr](s,y)
\\
&&\phantom{\quad =\E \biggl(\biggl\llvert}{}   \times M(\mathrm{d}s,\mathrm{d}y)\biggr\rrvert
^p1_{L_n(t)} \biggr)
\\
&&\quad \le C \int_0^t \sup
_{x\in\IR^3}\E \bigl(\bigl\llvert X\bigl(\bigl(s+2^{-n}
\bigr)\wedge t,\bigl(s_n+2^{-n}\bigr)\wedge t,x
\bigr)-X(s,s_n,x)\bigr\rrvert \bigr)^p.
\end{eqnarray*}
Equation \eqref{s3.7} is a particular case of equation \eqref{s3.6}.
Therefore, Proposition~\ref{pss3.2.1} also holds with $X_n$ replaced
by $X$. Then,
by virtue of \eqref{s4.4} and \eqref{s3.43}, this is bounded up to a
constant by $2^{-np\trup{(3-\beta)}{2}}+2^{-np\rho}$, with $\rho\in\,
]0,\frac{2-\beta}{2} [$.
Consequently,
%
\begin{equation}
\label{s3.86} \lim_{n\to\infty} \sup_{(t,x)\in[0,T]\times\R^3} \E
\bigl(\bigl\llvert U_n^{6,2}(t,x)\bigr\rrvert
^p1_{L_n(t)} \bigr)=0.
\end{equation}


For $U_n^{6,3}(t,x)$, after having applied Burkholder's inequatily we have
\[
\E \bigl( \bigl | U_n^{6,3}(t,x) \bigr |^p
1_{L_n(t)} \bigr)\le C\E \bigl(\bigl\llVert (\pi_n-I_{\mathcal{H}_t}
) \bigl[G(t-\cdot,x-\ast)B\bigl(X^-(\cdot,\ast)\bigr) \bigr] 1_{L_n(\cdot)}
\bigr\rrVert _{\mathcal{H}_t}^p \bigr).
\]
We want to prove that the right-hand side of this inequality tends to
zero as $n\to\infty$, uniformly in $(t,x)\in[t_0,T]\times K(t)$. For
this, we will use a similar approach as in \cite{milletss2}, pages 906--909.

Set
\[
\tilde Z_n(t,x)= \bigl\llVert (\pi_n-I_{\mathcal{H}_t}
) \bigl[G(t-\cdot,x-\ast)B\bigl(X^-(\cdot,\ast)\bigr) \bigr] 1_{L_n(\cdot)}
\bigr\rrVert _{\mathcal{H}_t}.
\]
Since $\pi_n$ is a projection on the Hilbert space $\mathcal{H}_t$,
the sequence $\{\tilde Z_n(t,x), n\ge1\}$ decreases to zero as $n\to
\infty$.
Assume that
%
\begin{equation}
\label{s3.87} \E \Bigl(\sup_n\bigl\llVert G(t-\cdot,x-
\ast) B\bigl(X^-(\cdot,\ast)\bigr) 1_{L_n(\cdot)} \bigr\rrVert
_{\mathcal{H}_t}^p \Bigr) <\infty.
\end{equation}
Remember that $X^-(s,y)$ stands for $X(s,s_n,y)$, defined in \eqref
{s3.8.3}, and therefore it depends on~$n$.
Then, by bounded convergence, this would imply $\lim_{n\to\infty}\E
(\tilde Z_n(t,x) )^p=0$.
Set $Z_n(t,x)=\E (\tilde Z_n(t,x) )^p$. Proceeding as in
the proof of Lemmas~\ref{lss3.1.1}, \ref{lss3.2.1}, we can check that\vspace*{-1pt}
\[
\bigl\llvert \bigl(Z_n(t,x)\bigr)^{\trup{1}{p}}-
\bigl(Z_n(\bar t,\bar x)\bigr)^{\trup
{1}{p}}\bigr\rrvert \le C \bigl(
\vert t-\bar t\vert+\vert x-\bar x\vert \bigr)^\rho,
\]
with $\rho\in\, ]0,\frac{2-\beta}{2} [$.

Hence, $(Z_n)_n$ is a sequence of monotonically decreasing continuous
functions defined on
$[0,T]\times\IR^3$ which converges pointwise to zero. Appealing to
Dini's theorem, we obtain
%
\begin{equation}
\label{s3.88} \lim_{n\to\infty}\sup_{(t,x)\in[t_0,T]\times K(t)}\E
\bigl(\tilde Z_n(t,x) \bigr)^p=0.
\end{equation}
This yields the expected result on $U_n^{6,3}$.\vadjust{\goodbreak}

It remains to prove \eqref{s3.87}. We will sketch the arguments,
leaving the details to the reader. As usually, we write
$\Vert G(t-\cdot,x-\ast) B(X^-(\cdot,\ast)) 1_{L_n(\cdot)} \Vert
_{\mathcal{H}_t}$ using the identity \eqref{fundamental}. By applying
H\"older's inequality with respect to the
measure on $[0,t]\times\R^3\times\R^3$ with density
$G(t-s,x-y)G(t-s,x-z)|y-z|^{-\beta} \,\mathrm{d}s \,\mathrm{d}y\,\mathrm{d}z$, and using the linear
growth of the function $B$, we obtain as upper bound for the left-hand
side of \eqref{s3.87}
%
\begin{equation}
\label{s3.880} C \Bigl[1+ \sup_{t,x\in[0,T]\times\R^3} E \Bigl(\sup
_{n}\bigl\llvert X(t,t_n,x)\bigr\rrvert
^p \Bigr) \Bigr].
\end{equation}
Looking back to the definition of $X(t,t_n,x)$, we see that for the
second and third terms in \eqref{s3.8.3}, the supremum in $n$ can be
easily handled, since they are defined pathwise.
For the stochastic integral term, we consider the discrete martingale
\[
\biggl\{\int_0^{t_n} \int_{\R^3}
G(s_0-s,x-y) (A+B) \bigl(X(s,y)\bigr) M(\mathrm{d}s,\mathrm{d}y),
\tf_{t_n}, n\in\mathbb{N} \biggr\},
\]
where $s_0\in\,]0,T]$ is fixed. By applying first Doob's maximal
inequality and then Burkholder's inequality, we have
\begin{eqnarray*}
&& E \biggl(\sup_{n}\biggl\llvert \int_0^{t_n}
\int_{\R^3} G(s_0-s,x-y) (A+B) \bigl(X(s,y)\bigr)
M(\mathrm{d}s,\mathrm{d}y)\biggr\rrvert ^p \biggr)
\\
&&\quad \le C E \biggl(\biggl\llvert \int_0^t
\int_{\R^3} G(s_0-s,x-y) (A+B) \bigl(X(s,y)\bigr)
M(\mathrm{d}s,\mathrm{d}y)\biggr\rrvert ^p \biggr)
\\
&&\quad \le C E \bigl(\bigl \Vert G(s_0-\cdot, x-\ast) (A+B) \bigl(X(
\cdot,\ast)\bigr)\bigr \Vert ^{\trup{p}{2}}_{\hac_t} \bigr).
\end{eqnarray*}
Finally, we take $s_0:=s$. Using the property $\sup_{(t,x)\in
[0,T]\times\R^3}E (\vert X(t,x)\vert^p )$, we obtain that
the expression \eqref{s3.880} is finite.

Owing to \eqref{s3.85}, \eqref{s3.86} and \eqref{s3.88}, we have
%
\begin{equation}
\label{s3.90} \lim_{n\to\infty}\sup_{(t,x)\in[0,T]\times K(t)} \E
\bigl(\bigl |U_n^6(t,x)\bigr |^p1_{L_n(t)}
\bigr)=0.
\end{equation}


In order to conclude the proof, let us consider the estimates \eqref
{s3.76}, \eqref{s3.77}, \eqref{s3.78}, \eqref{s3.80}, along with
\eqref{s3.81}. We see that
\[
\E \bigl(\bigl\llvert X_n(t,x)-X(t,x)\bigr\rrvert
^p1_{L_n(t)} \bigr)\le C_1 \theta_n +
C_2 \int_0^t \mathrm{d}s \Bigl[
\sup_{x\in K(s) }\E \bigl(\bigl\llvert X_n(s,x)-X(s,x)
\bigr\rrvert ^p 1_{L_n(s)} \bigr) \Bigr],
\]
where $(\theta_n,n\ge1)$ is a sequence of real numbers which
converges to zero as $n\to\infty$. Applying Gronwall's lemma, we
finish the proof of the theorem.
\qed


\subsection{Proof of Theorem \texorpdfstring{\protect\ref{ts3.1}}{2.2}}
\label{ss3.4}

Fix $t_0>0$ and a compact set $K\subset\IR^3$.
Let $Y_n(t,x):= X_n(t,x)-X(t,x)$ and $B_n(t):= L_n(t)$, $n\ge1$,
$(t,x)\in[t_0,T]\times K$, $p\in[1,\infty[$. From Theorems~\ref
{ts3.2} and~\ref{ts3.3}, we see that the conditions (P1) and (P2) of
Lemma~\ref{ls5.2} are satisfied with $\delta=p\rho-4$, for any $\rho
\in\, ]0,\frac{2-\beta}{2} [$.
We infer that
%
\begin{equation}
\label{s5.2} \lim_{n\to\infty}\E \bigl(\llVert X_n-X
\rrVert ^p_{\rho
,t_0,K}1_{L_n(t)} \bigr)=0,
\end{equation}
for any $p\in[1,\infty[$ and $\rho\in\, ]0,\frac{2-\beta
}{2} [$.

Fix $\epsilon>0$. Since $\lim_{n\to\infty}\Pb(L_n(t)^c)=0$, there
exists $N_0\in\IN$ such that for all $n\ge N_0$,
$\Pb(L_n(t)^c)<\epsilon$.  Then,
for any $\lambda>0$ and $n\ge N_0$,
\begin{eqnarray*}
\Pb \bigl(\|X_n-X\|_{\rho,t_0,K}>\lambda \bigr)&\le&\epsilon+ \Pb \bigl( \bigl(
\|X_n-X\|_{\rho,t_0K}>\lambda \bigr)\cap L_n(t) \bigr)
\\
&\le& \epsilon+ \lambda^{-p} \E \bigl(\|X_n-X
\|_{\rho
,t_0K}^p1_{L_n(t)} \bigr).
\end{eqnarray*}
Since $\epsilon>0$ is arbitrary, this finishes the proof of the theorem.
\qed


\section{Support theorem}
\label{sm}

This section is devoted to the characterization of the topological
support of the law of the random field solution to the stochastic wave
equation \eqref{s1.6}. As has been explained in the \hyperref[s1]{Introduction}, this
is a corollary of Theorem~\ref{ts3.1}.
%
\begin{theorem}
\label{tsm.1}
Assume that the functions $\sigma$ and $b$ are Lipschitz continuous.
Fix $t_0\in\,]0,T[$ and a compact set $K\subset\IR^3$. Let $u=\{
u(t,x), (t,x)\in[t_0,T]\times K\}$ be the random field solution to
\eqref{s1.6}. Fix
$\rho\in\, ]0,\frac{2-\beta}{2} [$. Then the topological
support of the law of $u$ in the space $\mathcal{C}^\rho
([t_0,T]\times K)$ is the closure in
$\mathcal{C}^\rho([t_0,T]\times K)$ of the set of functions $\{\Phi
^h, h\in\mathcal{H}_T\}$, where $\{\Phi^h(t,x), (t,x)\in
[t_0,T]\times K\}$ is the solution of \eqref{sm.h}.
\end{theorem}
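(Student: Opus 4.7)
The strategy follows exactly the two-step recipe recalled in the Introduction (see \eqref{s1.8}--\eqref{s1.9}), with both approximation steps provided by Theorem \ref{ts3.1} applied to two different choices of the coefficients $(A,B,D)$. Setting $\xi_1(h)=\xi_2(h)=\Phi^h$ and $w^n$ given by \eqref{s3.3}, the result will follow from two convergences in probability in the norm $\Vert\cdot\Vert_{\rho,t_0,K}$:
\begin{equation*}
\Phi^{w^n}\longrightarrow u, \qquad u\circ T_n^h\longrightarrow \Phi^h,
\end{equation*}
where $T_n^h(\omega)=\omega+h-w^n$ is the shift defined in \eqref{sm.1}.

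For the first convergence (which yields the inclusion $\mathrm{supp}(u\circ\Pb^{-1})\subset \overline{\{\Phi^h:h\in\hact\}}$), I would apply Theorem \ref{ts3.1} with the choice $A\equiv 0$, $B=\sigma$, $D\equiv 0$ and drift $b$. With this selection, Equation \eqref{s3.7} reduces to the mild form \eqref{s1.6} of $u$, while Equation \eqref{s3.6} reduces to $\Phi^{w^n}$ (taking $h=0$ formally, or equivalently absorbing $h$ into the $w^n$ term; one may also keep $D=0$ and $h=0$ with no loss). Hypothesis (B) is satisfied since $\sigma$ and $b$ are globally Lipschitz, and Theorem \ref{ts3.1} gives $\Vert\Phi^{w^n}-u\Vert_{\rho,t_0,K}\to 0$ in probability, which is precisely \eqref{s1.8}.

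For the second convergence (which yields the reverse inclusion), fix $h\in\hact$ and consider the transformation $T_n^h$. By Girsanov's theorem applied to the infinite-dimensional Brownian motion $(W_j)_{j\in\mathbb N}$, the law $\Pb\circ(T_n^h)^{-1}$ is absolutely continuous with respect to $\Pb$, and the stochastic integral with respect to $M$ gets shifted by the pathwise inner product with $h-w^n$. Consequently, $u\circ T_n^h$ satisfies, $\Pb$-a.s.,
\begin{align*}
(u\circ T_n^h)(t,x)&=\int_0^t\!\!\int_{\R^3}\!G(t-s,x-y)\sigma((u\circ T_n^h)(s,y))M(ds,dy)\\
&\quad-\langle G(t-\cdot,x-\ast)\sigma(u\circ T_n^h),w^n\rangle_{\hac_t}\\
&\quad+\langle G(t-\cdot,x-\ast)\sigma(u\circ T_n^h),h\rangle_{\hac_t}\\
&\quad+\int_0^t\big[G(t-s,\cdot)\star b(u\circ T_n^h)(s,\cdot)\big](x)\,ds.
\end{align*}
This is exactly Equation \eqref{s3.6} with the choice $A=\sigma$, $B=-\sigma$, $D=\sigma$ and the same drift $b$; correspondingly Equation \eqref{s3.7} with this choice (so that $A+B=0$) becomes precisely the deterministic equation \eqref{sm.h} defining $\Phi^h$. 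Hypothesis (B) is again satisfied, and Theorem \ref{ts3.1} yields $\Vert u\circ T_n^h - \Phi^h\Vert_{\rho,t_0,K}\to 0$ in probability, whence $\Pb\{\Vert u\circ T_n^h - \Phi^h\Vert_{\rho,t_0,K}<\epsilon\}\to 1>0$ for every $\epsilon>0$, giving \eqref{s1.9}.

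The main technical point will be the rigorous justification of the Girsanov identity above: one must verify that the shift of $(W_j)_{j\in\mathbb N}$ by the element $h-w^n\in\hact$ translates, via the isometric representation of $M$ as described after \eqref{fundamental}, into the additional pathwise term $\langle G(t-\cdot,x-\ast)\sigma(u\circ T_n^h),h-w^n\rangle_{\hac_t}$ in the mild formulation. This is a standard computation once one fixes the cylindrical representation, but requires care because the stochastic integral in \eqref{s1.7} is defined via the series of It\^o integrals and the shift acts componentwise on each $W_j$; one checks the identity first for step-process integrands and extends by the uniqueness of the solution to Equation \eqref{s3.6}, which is guaranteed by Theorem \ref{ts5.1}. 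Once this representation is in place, the remainder of the argument is a direct application of Theorem \ref{ts3.1}.
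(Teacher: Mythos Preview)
Your proposal is correct and follows essentially the same route as the paper: the two convergences \eqref{sm.3}--\eqref{sm.4} are obtained from Theorem \ref{ts3.1} with exactly the choices $A=D=0$, $B=\sigma$ (giving $X=u$, $X_n=\Phi^{w^n}$) and $A=D=\sigma$, $B=-\sigma$ (giving $X=\Phi^h$, $X_n=u\circ T_n^h$), and the equation you write for $u\circ T_n^h$ is precisely \eqref{sm.2}. The paper states the Girsanov shift identity \eqref{sm.2} without further comment, so your extra paragraph justifying it is more cautious than the original, but not a departure from the argument.
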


Let $\{w^n, n\ge1\}$ be the sequence of $\mathcal{H}_T$-valued random
variables defined in \eqref{s3.3}. For any $h\in\mathcal{H}_T$, we
consider the sequence of transformations of $\Omega$ defined in \eqref
{sm.1}. As has been pointed out in Section~\ref{s1},
$P\circ(T_n^h)^{-1}\ll P$.

Notice also that the process $v_n(t,x):=(u\circ T_n^h)(t,x)$, $(t,x)\in
[t_0,T]\times\IR^3$, satisfies the equation
%
\begin{eqnarray}
\label{sm.2} v_n(t,x)&=&\int_0^t
\int_{\IR^3}G(t-s,x-y)\sigma \bigl(v_n(s,y)\bigr)M(
\mathrm{d}s,\mathrm{d}y)
\nonumber
\\[-8pt]
\\[-8pt]
&&{}+ \bigl\langle G(t-\cdot,x-\ast)\sigma\bigl(v_n(\cdot,\ast )
\bigr),h-w^n \bigr\rangle_{\mathcal{H}_t} +\int_0^t
\mathrm{d}s \bigl[G(t-s,\cdot)\star b\bigl(v_n(s,\cdot)\bigr)
\bigr](x).\quad\ \
\nonumber
\end{eqnarray}

\begin{pf*}{Proof of Theorem~\ref{tsm.1}}
According to the method developed in \cite{millet-ss94a} (see also
\cite{Ba-Mi-SS} and Section~\ref{s1} for a summary), the theorem will
be a consequence of the following convergences:
%
\begin{eqnarray}
\label{sm.3}\lim_{n\to\infty}\Pb \bigl\{\bigl\llVert u-
\Phi^{w^n}\bigr\rrVert _{\rho,t_0,K}>\eta \bigr\}&=&0,
\\
\label{sm.4}\lim_{n\to\infty}\Pb \bigl\{\bigl\llVert u\circ
T_n^h-\Phi^h\bigr\rrVert _{\rho,t_0,K}>
\eta \bigr\}&=&0,
\end{eqnarray}
where $\eta$ is an arbitrary positive real number.\vadjust{\goodbreak}

This follows from the general approximation result developed in Section~\ref{s3}.
Indeed, consider equations \eqref{s3.7} and \eqref{s3.6}
with the choice of coefficients $A=D=0$, $B=\sigma$. Then the
processes $X$ and $X_n$ coincide with $u$ and $\Phi^{w^n}$, respectively.
Hence, the convergence \eqref{sm.3} follows from Theorem~\ref{ts3.1}.
Next, we consider again equations \eqref{s3.7} and \eqref{s3.6} with
a new choice of coefficients: $A=D=\sigma$, $B=-\sigma$. In this
case, the processes $X$ and $X_n$ are equal to $\Phi^h$ and
$v_n:=u\circ T_n^h$, respectively. Thus, Theorem~\ref{ts3.1} yields
\eqref{sm.4}.\vspace*{1pt}
\end{pf*}


\section{Auxiliary results}
\label{s4}

The most difficult part in the proof of Theorem~\ref{ts3.1} consists
of establishing \eqref{s3.15}. In particular, handling the
contribution of the pathwise integral (with respect to $w^n$) requires
a careful analysis of the discrepancy between this integral and the
stochastic integral with respect to $M$. This section gathers several
technical results that have been applied in the analysis of such
questions in the preceding Section~\ref{s3}.

The first statement in the next lemma provides a measure of the
discrepancy between the processes $X(t,x)$ and $X(t,t_n,x)$ defined in
\eqref{s3.7}, \eqref{s3.8.3}, respectively.\vspace*{1pt}
%
\begin{lemma}
\label{ls4.1}
Suppose that Hypothesis~\textup{\ref{HypB}} is satisfied. Then for any $p\in[1,\infty
)$ and every integer $n\ge1$,\vspace*{1pt}
%
\begin{equation}
\label{s4.4} \sup_{(t,x)\in[0,T]\times\R^3} \bigl \|X(t,x)-X(t,t_n,x)
\bigr \|_p\le C2^{-n{\trup{(3-\beta)}{2}}}
\end{equation}
and\vspace*{1pt}
%
\begin{equation}
\label{s4.5} \sup_{n\ge1}\sup_{(t,x)\in[0,T]\times\R^3}
\bigl \|X(t,t_n,x)\bigr \|_p\le C<\infty,
\end{equation}
where $C$ is a positive constant not depending on $n$.
\end{lemma}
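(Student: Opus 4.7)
The plan is to prove \eqref{s4.4} first by analyzing the three integral contributions on $[t_n, t]$ that comprise the difference $X(t,x) - X(t, t_n, x)$, and then obtain \eqref{s4.5} as an immediate consequence via the triangle inequality, using the a priori $L^p$-moment bound on $X$ supplied by Theorem~\ref{ts5.1}. Write
\begin{equation*}
X(t,x) - X(t, t_n, x) = I_1(t,x) + I_2(t,x) + I_3(t,x),
\end{equation*}
where $I_1$, $I_2$, $I_3$ denote, respectively, the stochastic integral against $M$ of $G(t-s,x-y)(A+B)(X(s,y))$, the pathwise inner product with $h$ involving $D(X)$, and the drift term, each with the time integration restricted to $[t_n, t]$. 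By construction, $|t - t_n| \le 2\cdot 2^{-n} T$.

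For $I_1$, I would apply Burkholder's inequality in the isometric representation \eqref{s1.7}, followed by the linear growth of $A+B$ and Theorem~\ref{ts5.1} to replace the random integrand by a constant. Matters then reduce to estimating $\int_{t_n}^t \int_{\R^3} |\mathcal{F}G(t-s)(\xi)|^2 \mu(d\xi)\, ds$, and the identity $\int_{\R^3} \mu(d\xi)\,|\mathcal{F}G(r,\ast)(\xi)|^2 = C r^{2-\beta}$ (recalled in Subsection~\ref{ss3.3}) yields the bound $C(t-t_n)^{3-\beta} \le C 2^{-n(3-\beta)}$. Taking the $p/2$ power gives the factor $2^{-n(3-\beta)/2}$. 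For $I_2$, Cauchy-Schwarz in $\mathcal{H}_t$ together with $\|h\|_{\mathcal{H}_T} < \infty$ leads to exactly the same quantity after invoking the linear growth of $D$ and Theorem~\ref{ts5.1}, so $\|I_2\|_p$ is controlled by $C 2^{-n(3-\beta)/2}$ as well.

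For the drift $I_3$, H\"older's inequality with respect to the finite measure $G(t-s,x-dy)\,ds$ on $[t_n,t]\times \R^3$, combined with $\int_{\R^3} G(t-s,dy) = t-s$, the linear growth of $b$, and Theorem~\ref{ts5.1}, yields a bound of order $(t-t_n)^{2p}$, i.e.\ $C 2^{-2np}$. Since $\beta \in\,]0,2[$ one has $2 > (3-\beta)/2$, so this contribution is absorbed by the bounds on $I_1$ and $I_2$. Summing the three estimates proves \eqref{s4.4}. The inequality \eqref{s4.5} then follows from
\begin{equation*}
\|X(t,t_n,x)\|_p \le \|X(t,x)\|_p + \|X(t,x) - X(t,t_n,x)\|_p,
\end{equation*}
together with the uniform $L^p$-bound on $X$ furnished by Theorem~\ref{ts5.1} and \eqref{s4.4}.

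No step here presents a genuine obstacle; the arguments are of the same nature as those already used in Section~\ref{s3}. The only point that warrants care is the treatment of $I_3$: because $G(t-s,dy)$ is a measure and not a function, one must exploit the fact that its total mass equals $t-s$ in order to get a suitably high power of $(t-t_n)$ from H\"older's inequality; a naive pointwise estimate in $y$ is not available and would be the one trap to avoid.
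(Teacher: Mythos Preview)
Your proposal is correct and follows essentially the same route as the paper's proof: the same three-term decomposition $V_1,V_2,V_3$ (your $I_1,I_2,I_3$), Burkholder for the stochastic term, Cauchy--Schwarz for the $h$-term, H\"older against the measure $G(t-s,x-dy)\,ds$ for the drift, the identity $\int_{\R^3}\mu(d\xi)\,|\mathcal{F}G(r)(\xi)|^2=Cr^{2-\beta}$ to get $(t-t_n)^{3-\beta}$, and finally the triangle inequality with \eqref{s5.21} for \eqref{s4.5}. The only cosmetic slip is the phrase ``taking the $p/2$ power gives the factor $2^{-n(3-\beta)/2}$'': that exponent is for $\|I_1\|_p$, whereas $\E|I_1|^p$ carries $2^{-np(3-\beta)/2}$; the argument is unaffected.
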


\begin{pf} Fix $p\in[2,\infty[$. From equations \eqref{s3.7},
\eqref{s3.8.3}, we obtain\vspace*{1pt}
\[
\bigl \|X(t,x)-X(t,t_n,x)\bigr \|_p^p\le C
\bigl(V_1(t,x)+ V_2(t,x)+V_3(t,x) \bigr),
\]
where\vspace*{1pt}
\begin{eqnarray*}
V_1(t,x) &:=& \biggl\llVert \int_{t_n}^t
\int_{\R^3} G(t-s,x-y) (A+B) \bigl(X(s,y)\bigr) M(\mathrm{d}s,
\mathrm{d}y)\biggr\rrVert _p^p,
\\
V_2(t,x) &:=&\bigl\llVert G(t-\cdot,x-\ast)D\bigl(X(\cdot,\ast )
\bigr)1_{[t_n,t]}(\cdot),h\rangle_{\mathcal{H}_t}\bigr\rrVert
_p^p,
\\
V_3(t,x) &:=&\biggl\llVert \int_{t_n}^t
G(t-s,\cdot)\star b\bigl(X(s,\cdot)\bigr) (x) \,\mathrm{d}s\biggr\rrVert
_p^p.
\end{eqnarray*}
Applying first Burholder's and then H\"older's inequalities, we obtain
\begin{eqnarray*}
V_1(t,x) &\le& C \biggl(\int_{t_n}^t
\mathrm{d}s \int_{\R^3}\mu(\mathrm{d}\xi) \bigl |\mathcal {F}G(t-s)
(\xi)\bigr |^2 \biggr)^{\trup{p}{2}} \sup_{(t,x)\in[0,T]\times\R^3}\E
\bigl(\bigl |(A+B) \bigl(X(t,x)\bigr)\bigr  |^{p} \bigr)
\\
&\le& C \biggl(\int_{t_n}^t \mathrm{d}s \int
_{\R^3}\mu(\mathrm{d}\xi) \bigl |\mathcal {F}G(t-s) (
\xi)\bigr |^2 \biggr)^{\trup{p}{2}} \Bigl(1 + \sup_{(t,x)\in[0,T]\times\R^3}
\E \bigl( \bigl|X\bigl((t,x)\bigr|^{p}\bigr) \Bigr).
\end{eqnarray*}
Applying the inequality \eqref{s3.59} along with \eqref{s5.21}, imply
\[
V_1(t,x)\le C 2^{-np{\trup{(3-\beta)}{2}}}.
\]

For the study of $V_2$, we apply first Cauchy--Schwarz inequality and
then H\"older's inequality. We obtain
\[
V_2(t,x) \le\bigl \|h1_{[t_n,t]}(\cdot)\bigr \|_{\mathcal{H}_t}^{{\trup{p}{2}}}
\E \biggl(\int_0^t \mathrm{d}s \bigl \|G(t-s,x-
\ast)D\bigl(X(s,\ast)\bigr) 1_{[t_n,t]}(s)\bigr \|_{\mathcal{H}}^2
\biggr)^{{\trup{p}{2}}}.
\]
Hence, similarly as for $V_1$ we have
\[
V_2(t,x)\le C 2^{-np{\trup{(3-\beta)}{2}}}.
\]

By applying H\"older's inequality, we get
\begin{eqnarray*}
V_3(t,x) &\le& \biggl(\int_{t_n}^t
\mathrm{d}s \int_{\R^3} G(t-s,x-\mathrm{d}y)
\biggr)^{p-1} \int_{t_n}^t \mathrm{d}s
\int_{\R^3} G(t-s,x-\mathrm{d}y) \E\bigl(\bigl |b\bigl(X(s,y)
\bigr)\bigr |^p\bigr)
\\
&\le& C \biggl(\int_{t_n}^t \mathrm{d}s \int
_{\R^3} G(t-s,x-\mathrm{d}y) \biggr)^p \Bigl(1+
\sup_{(t,x)\in[0,T]\times\R^3} \E\bigl(\bigl |X(s,y)\bigr |^p\bigr) \Bigr)
\\
&\le& C 2^{-2np}.
\end{eqnarray*}
The condition $\beta\in\,]0,2[$ implies $2^{-2np}< 2^{-np{\trup
{(3-\beta)}{2}}}$. Thus from the estimates on $V_i(t,x)$, $i=1,2,3$
(which hold uniformly on $(t,x)\in[0,T]\times\IR^3$) we obtain
\eqref{s4.4}.

Finally, \eqref{s4.5} is a consequence of the triangular inequality,
\eqref{s4.4} and \eqref{s5.21}.
\end{pf}


The next result states an analogue of Lemma~\ref{ls4.1} for the
stochastic processes $X_n$, $X_n^-$ defined in \eqref{s3.6}, \eqref
{s3.8.2}, respectively, this time including a localization by $L_n$.

\begin{lemma}
\label{ls4.2}
We assume Hypothesis~\textup{\ref{HypB}}.
Then for any $p\in[2,\infty)$ and $t\in[0,T]$,
%
\begin{eqnarray}
\label{s4.11} &&\sup_{(s,y)\in[0,t]\times\R^3}\E \bigl(\bigl |X_n(s,y)-X_n^-(s,y)\bigr |^p1_{L_n(s)}
\bigr)
\nonumber
\\[-8pt]
\\[-8pt]
&&\quad \le Cn^{\trup{3p}{2}} 2^{-np{\trup{(3-\beta)}{2}}} \Bigl[1+ \sup
_{(s,y)\in[0,t]\times\R^3} \E\bigl(\bigl |X_n(s,y)\bigr |^p1_{L_n(s)}
\bigr) \Bigr].
\nonumber
\end{eqnarray}
\end{lemma}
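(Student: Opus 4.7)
The plan is to subtract \eqref{s3.8.2} from \eqref{s3.6} and observe that the only contributions to $X_n(s,y)-X_n^-(s,y)$ come from integration over the short interval $[s_n,s]$, whose length is at most $2\cdot 2^{-n}T$. More precisely, I will write
\begin{equation*}
X_n(s,y)-X_n^-(s,y) = I_1(s,y)+I_2(s,y)+I_3(s,y)+I_4(s,y),
\end{equation*}
where $I_1$ is the stochastic integral of $G(s-r,y-z)A(X_n(r,z))$ against $M$ on $[s_n,s]\times\R^3$, $I_2$ is the pathwise inner product $\langle G(s-\cdot,y-\ast)B(X_n(\cdot,\ast))1_{[s_n,s]}(\cdot),w^n\rangle_{\hac_s}$, $I_3$ is the analogous inner product with $B$ replaced by $D$ and $w^n$ replaced by $h$, and $I_4$ is the convolution drift integral on $[s_n,s]$. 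It then suffices to estimate $\E(|I_k|^p 1_{L_n(s)})$ for $k=1,\ldots,4$ separately.

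For each term I will exploit the basic measure-theoretic estimate that follows from \eqref{fundamental} together with Plancherel:
\begin{equation*}
\int_{s_n}^{s}\|G(s-r,\cdot)\|_{\hac}^{2}\,dr \;=\; C\int_{s_n}^s (s-r)^{2-\beta}\,dr \;\le\; C\,2^{-n(3-\beta)},
\end{equation*}
using $s-s_n\le 2\cdot 2^{-n}T$. For $I_1$ I apply Burkholder's inequality, use \eqref{fundamental} to rewrite $\|G(s-r,y-\ast)A(X_n(r,\ast))\|_{\hac}^2$ as a double spatial integral, and then apply H\"older's inequality with respect to the finite measure $G(s-r,y-du)G(s-r,y-dv)|u-v|^{-\beta}\,dr$ on $[s_n,s]\times\R^3\times\R^3$; the linear growth of $A$ together with the monotonicity $L_n(s)\subset L_n(r)$ for $r\le s$ then yields $\E(|I_1|^p1_{L_n(s)})\le C\,2^{-np(3-\beta)/2}\bigl[1+\sup\E(|X_n|^p 1_{L_n})\bigr]$. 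For $I_3$ I bring out $\|h\|_{\hac_T}^p$ by Cauchy--Schwarz and proceed exactly as for $I_1$, obtaining the same bound without any polynomial-in-$n$ factor. For $I_4$, H\"older's inequality with respect to $G(s-r,y-dz)\,dr$ combined with $\int_{\R^3}G(s-r,y-dz)=s-r$ gives an upper bound of order $2^{-2np}\bigl[1+\sup\E(|X_n|^p 1_{L_n})\bigr]$, which is smaller than the target since $\beta\in\,]0,2[$.

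The main obstacle, and the source of the factor $n^{3p/2}$ in \eqref{s4.11}, is the pathwise term $I_2$. Here I would first apply Cauchy--Schwarz to get
\begin{equation*}
|I_2|^p 1_{L_n(s)} \le \bigl\|w^n 1_{L_n(s)} 1_{[s_n,s]}\bigr\|_{\hac_T}^{p}\,\bigl\|G(s-\cdot,y-\ast) B(X_n(\cdot,\ast))1_{[s_n,s]}\bigr\|_{\hac_s}^{p}.
\end{equation*}
Because $[s_n,s]$ has length $\le 2\cdot 2^{-n}T$, it meets at most two of the dyadic intervals $\Delta_i$, so a small variation of \eqref{s3.14} (or an integrated use of \eqref{s3.101}) yields $\|w^n 1_{L_n(s)} 1_{[s_n,s]}\|_{\hac_T}\le C n^{3/2}$. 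Taking expectation and treating the remaining $\hac_s$-norm exactly as in the analysis of $I_1$, with $A$ replaced by $B$, I obtain
\begin{equation*}
\E(|I_2|^p 1_{L_n(s)})\le C\, n^{3p/2}\, 2^{-np(3-\beta)/2}\Bigl[1+\sup_{(r,z)\in[0,t]\times\R^3}\E(|X_n(r,z)|^p 1_{L_n(r)})\Bigr].
\end{equation*}
Combining the four bounds and noting that $n^{3p/2}$ dominates the other prefactors gives \eqref{s4.11}.
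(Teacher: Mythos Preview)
Your proposal is correct and follows essentially the same route as the paper: the same four-term decomposition of $X_n-X_n^-$ over $[s_n,s]$, Burkholder and H\"older for the stochastic and drift terms, and Cauchy--Schwarz together with the localized bound \eqref{s3.14} on $\|w^n 1_{L_n}1_{[s_n,s]}\|_{\hac_T}$ to produce the $n^{3p/2}$ factor from the $w^n$-term. Your observation that $[s_n,s]$ meets at most two dyadic intervals is exactly the ``small variation'' of \eqref{s3.14} the paper implicitly uses.
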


\begin{pf}
Fix $p\in[2,\infty[$ and consider the decomposition
%
\begin{equation}
\label{s4.12} \E \bigl(\bigl |X_n(t,x) - X_n^-(t,x)\bigr |^p
1_{L_n(t)} \bigr)\le C\sum_{i=1}^4
T_{n,i}^k (t,x),
\end{equation}
where
\begin{eqnarray*}
T_{n,1}(t,x) &=&\E \biggl(\biggl\llvert \int_{t_n}^t
\int_{\R^3} G(t-s,x-y) A\bigl( X_n(s,y)\bigr)M(
\mathrm{d}s,\mathrm{d}y) \biggr\rrvert ^p 1_{L_n(t)} \biggr),
\\
T_{n,2}(t,x) &=&\E \bigl(\bigl\llvert \bigl\langle G(t-\cdot,x-\ast) B
\bigl( X_n(\cdot,\ast)\bigr)1_{[t_n,t]}(\cdot),w^n
\bigr\rangle_{\mathcal{H}_t} \bigr\rrvert ^p 1_{L_n(t)} \bigr),
\\
T_{n,3}(t,x) &=&\E \bigl(\bigl\llvert \bigl\langle G(t-\cdot,x-\ast) D
\bigl( X_n(\cdot,\ast)\bigr)1_{[t_n,t]}(\cdot),h\bigr
\rangle_{\mathcal{H}_t} \bigr\rrvert ^p 1_{L_n(t)} \bigr),
\\
T_{n,4}(t,x) &=&\E \biggl(\biggl\llvert \int_{t_n}^t
G(t-s,\cdot)\star b\bigl(X_n(s,\cdot)\bigr) (x)\, \mathrm{d}s \biggr
\rrvert ^p 1_{L_n(t)} \biggr).
\end{eqnarray*}

By the same arguments used for the analysis of $V_1(t,x)$ in the
preceding lemma, we obtain
%
\begin{equation}
\label{s4.13} T_{n,1}(t,x)\le C 2^{-np{\trup{(3-\beta)}{2}}}\times \Bigl[1+\sup
_{(s,y)\in[0,t]\times\R^3}\E \bigl(\bigl\llvert X_n(s,y) \bigr\rrvert
^p 1_{L_n(s)} \bigr) \Bigr].
\end{equation}


For $T_{n,2}(t,x)$, we first use Cauchy--Schwarz' inequality to obtain
\begin{eqnarray*}
&&T_{n,2}(t,x)
\\
&&\quad\le\E \bigl(\bigl\llvert \bigl \|w^n
1_{[t_n,t]} 1_{L_n(t)}\bigr \| _{\mathcal{H}_t} \bigl \|G(t-\cdot,x-\ast) B
\bigl( X_n(\cdot,\ast)\bigr)1_{[t_n,t]}(\cdot )1_{L_n(t)}
\bigr \|_{\mathcal{H}_t}\bigr\rrvert ^p \bigr).
\end{eqnarray*}
Appealing to \eqref{s3.14}, this yields
\[
T_{n,2}(t,x)\le C n^{\trup{3p}{2}} \E \biggl(\biggl\llvert \int
_{t_n}^t \mathrm{d}s \bigl \|G(t-s,x-\ast) B\bigl(
X_n(s,\ast)\bigr) (s)1_{L_n(s)}\bigr \|_{\mathcal{H}}^2
\biggr\rrvert ^{{\trup{p}{2}}} \biggr).
\]
We can now proceed as for the term $V_2((t,x)$ in the proof of Lemma~\ref{ls4.1}. We obtain
%
\begin{equation}
\label{s4.14} T_{n,2}(t,x)\le C n^{\trup{3p}{2}}2^{-np{\trup{(3-\beta)}{2}}}
\Bigl[1+\sup_{(s,y)\in[0,t]\times\R^3}\E \bigl(\bigl\llvert X_n(s,y)
\bigr\rrvert ^p 1_{L_n(s)} \bigr) \Bigr].
\end{equation}

The difference between the terms $T_{n,3}(t,x)$ and $T_{n,2}(t,x)$ is
that $w^n$ in the latter is replaced by $h$ in the former. Hence,
following similar arguments as for the study of $T_{n,2}(t,x)$, and
using that $\Vert h 1_{[t_n,t]} 1_{L_n(t)}\Vert_{\mathcal
{H}_T}<\infty$, we prove
%
\begin{equation}
\label{s4.15} T_{n,3}(t,x)\le C 2^{-np{\trup{(3-\beta)}{2}}}\times \Bigl[1+\sup
_{(s,y)\in[0,t]\times\R^3}\E \bigl(\bigl\llvert X_n(s,y) \bigr\rrvert
^p 1_{L_n(s)} \bigr) \Bigr].
\end{equation}

Finally, we notice the similitude between $T_{n,4}(t,x)$ and $V_3(t,x)$
in Lemma~\ref{ls4.1}. Proceeding as for the study of this term, we obtain
%
\begin{eqnarray}
\label{s4.16} T_{n,4}(t,x)&\le& C \biggl(\int_{t_n}^t
\mathrm{d}s \int_{\IR^3} G(t-s,x-\mathrm{d}y)
\biggr)^p \Bigl[1+\sup_{(s,y)\in[0,t]\times\R^3} \E \bigl(\bigl\llvert
X_n(s,y)\bigr\rrvert ^p 1_{L_n(s)} \bigr) \Bigr]
\nonumber
\\[-8pt]
\\[-8pt]
&\le& C 2^{-np{\trup{(3-\beta)}{2}}} \Bigl[1+\sup_{(s,y)\in[0,t]\times
\R^3}\E \bigl(\bigl
\llvert X_n(s,y)\bigr\rrvert ^p 1_{L_n(s)} \bigr)
\Bigr].
\nonumber
\end{eqnarray}
From \eqref{s4.12}--\eqref{s4.16} we obtain \eqref{s4.11}.
\end{pf}

%
\begin{lemma}
\label{ls4.3}
We assume Hypothesis~\textup{\ref{HypB}}. Then, for any $p\in[1,\infty)$, there
exists a finite constant $C$ such that
%
\begin{equation}
\label{s4.18} \sup_{n\ge1}\sup_{(t,x)\in[0,T]\times\R^3}\E
\bigl[\bigl(\bigl |X_n(t,x)\bigr |^p+\bigl |X_n^-(t,x)\bigr |^p
\bigr)1_{L_n(t)} \bigr]\le C.
\end{equation}
Moreover,
%
\begin{equation}
\label{s4.19} \sup_{(t,x)\in[0,T]\times\R^3} \bigl \|\bigl(X_n(t,x)-X_n^-(t,x)
\bigr)1_{L_n(t)}\bigr \| _p \le C n^{\trup{3}{2}} 2^{-n{\trup{(3-\beta)}{2}}}.
\end{equation}
\end{lemma}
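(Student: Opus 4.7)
The two assertions will be established together once we prove \eqref{s4.18} for $X_n$; the bound for $X_n^-$ then follows by the triangle inequality combined with Lemma \ref{ls4.2}, while \eqref{s4.19} is exactly Lemma \ref{ls4.2} with the uniform bound on $X_n$ substituted into the bracket (recall that $n^{3p/2}2^{-np(3-\beta)/2}$ is bounded since $\beta\in]0,2[$). Fix $p\in[2,\infty[$ and set
\begin{equation*}
\psi_n(t):=\sup_{x\in\R^3}\E\bigl(|X_n(t,x)|^p\,1_{L_n(t)}\bigr).
\end{equation*}
The plan is to derive a linear Gronwall-type inequality for $\psi_n$ with constants \emph{independent of $n$}.

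Starting from \eqref{s3.6} I would split $X_n(t,x)$ into the four natural contributions and handle three of them by arguments that repeat those of Lemma \ref{ls4.2}: Burkholder plus H\"older with the linear growth of $A$ and the fact that $\int_0^T\!\int_{\R^3}\mu(d\xi)|\mathcal{F}G(t-s)(\xi)|^2\,ds<\infty$ for the stochastic integral; Cauchy--Schwarz with $\|h\|_{\mathcal{H}_T}<\infty$ for the integral against $h$; and H\"older with respect to the finite measure $G(t-s,x-dy)\,ds$ for the convolution with $b$. Each of these is bounded by $C\int_0^t[1+\psi_n(s)]\,ds$ with $C$ independent of $n$. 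The remaining term
\begin{equation*}
\langle G(t-\cdot,x-\ast)B(X_n(\cdot,\ast)),w^n\rangle_{\mathcal{H}_t}
\end{equation*}
is the delicate one, and I would treat it by the now-familiar splitting $B(X_n)=B(X_n^-)+[B(X_n)-B(X_n^-)]$.

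For the first summand, the $\mathcal{F}_{s_n}$-measurability of $X_n^-(s,\ast)$ and the structure of $w^n$ allow to rewrite the pathwise integral as a stochastic integral against $M$ of $(\pi_n\circ\tau_n)[G(t-\cdot,x-\ast)B(X_n^-(\cdot,\ast))]$; Burkholder together with the uniform boundedness of $\pi_n\circ\tau_n$ on $\mathcal{H}_T$ and the linear growth of $B$ produces the bound $C\int_0^t[1+\sup_y\E(|X_n^-(s,y)|^p1_{L_n(s)})]\,ds$, which, after using the triangle inequality together with Lemma \ref{ls4.2} and the boundedness of $n^{3p/2}2^{-np(3-\beta)/2}$, becomes $C\int_0^t[1+\psi_n(s)]\,ds$. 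For the second summand, Cauchy--Schwarz combined with \eqref{s3.101} yields a prefactor $Cn^{3p/2}2^{np/2}$; the Lipschitz continuity of $B$ followed by Lemma \ref{ls4.2} then produces an overall contribution bounded by $Cn^{3p}2^{-np(2-\beta)/2}\int_0^t[1+\psi_n(s)]\,ds$, and since $\beta\in]0,2[$ the coefficient $n^{3p}2^{-np(2-\beta)/2}$ is bounded uniformly in $n$ (in fact it tends to zero).

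Collecting all contributions yields $\psi_n(t)\le C+C\int_0^t\psi_n(s)\,ds$ with $C$ independent of $n$, and Gronwall's lemma delivers $\sup_n\sup_{t\in[0,T]}\psi_n(t)<\infty$, which is \eqref{s4.18} for $X_n$. Plugging this uniform bound back into Lemma \ref{ls4.2} gives \eqref{s4.19}, and the triangle inequality provides \eqref{s4.18} for $X_n^-$. The result extends from $p\ge 2$ to general $p\in[1,\infty[$ by Jensen's inequality. The main obstacle will be confirming that the seemingly dangerous factor $n^{3p/2}2^{np/2}$ arising from the $w^n$-integral is genuinely absorbed by the $n^{3p/2}2^{-np(3-\beta)/2}$ coming from Lemma \ref{ls4.2}: once this circular-looking combination is shown to stay bounded in $n$, the Gronwall step is routine.
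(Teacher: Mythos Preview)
Your proposal is correct and follows essentially the same route as the paper: the same splitting $B(X_n)=B(X_n^-)+[B(X_n)-B(X_n^-)]$, the same $\pi_n\circ\tau_n$ trick to turn the $X_n^-$-part into a stochastic integral, and the same use of Lemma \ref{ls4.2} to absorb the $n^{3p/2}2^{np/2}$ factor before applying Gronwall. The only cosmetic difference is that the paper introduces an auxiliary process $X_n(t,r;x)$ (with integration up to $r\le t$) and runs Gronwall on the combined quantity $\sup\E[(|X_n|^p+|X_n^-|^p)1_{L_n}]$, whereas you close the loop on $\psi_n$ alone and recover $X_n^-$ afterward; both organizations are equivalent.
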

\begin{pf}
For $0\le r\le t$, define
\begin{eqnarray*}
X_n(t,r;x) &=&\int_0^r
\int_{\R^3} G(t-s,x-y) A\bigl(X_n(s,y)\bigr) M(
\mathrm{d}s,\mathrm{d}y)
\\
&&{} +\bigl\langle G(t-\cdot,x-\ast)B\bigl(X_n(\cdot,\ast)
\bigr)1_{[0,r]}(\cdot ),w^n\bigr\rangle_{\mathcal{H}_t}
\\
&&{} +\bigl\langle G(t-\cdot,x-\ast)D\bigl(X_n(\cdot,\ast)
\bigr)1_{[0,r]}(\cdot ),h\bigr\rangle_{\mathcal{H}_t} +\int
_0^r G(t-s,\cdot)\star b\bigl(X_n(s,
\cdot)\bigr) (x) \,\mathrm{d}s.
\end{eqnarray*}

Fix $p\in[2,\infty[$ and consider the decomposition
\[
\E\bigl(\bigl |X_n(t,r;x)\bigr |^p1_{L_n(t)}\bigr)\le C \sum
_{i=1}^5 T_{n,i}(t,r;x),
\]
where
\begin{eqnarray*}
T_{n,1}(t,r;x) &=& \E \biggl(\biggl\llvert \int_0^r
\int_{\R^3} G(t-s,x-y)A\bigl(X_n(s,y)\bigr) M(
\mathrm{d}s,\mathrm{d}y)\biggr\rrvert ^p 1_{L_n(t)} \biggr),
\\
T_{n,2}(t,r;x) &=& \E \bigl(\bigl\llvert \bigl\langle G(t-\cdot,x-
\ast) B\bigl(X_n^-(\cdot,\ast)\bigr) 1_{[0,r]}(
\cdot),w^n\bigr\rangle_{\mathcal{H}_t} \bigr\rrvert ^p
1_{L_n(t)} \bigr),
\\
T_{n,3}(t,r;x) &=& \E \bigl(\bigl\llvert \bigl\langle G(t-\cdot,x-
\ast) \bigl[B\bigl(X_n(\cdot,\ast)\bigr)-B\bigl(X_n^-(
\cdot,\ast)\bigr)\bigr] 1_{[0,r]}(\cdot ),w^n\bigr
\rangle_{\mathcal{H}_t}\bigr\rrvert ^p1_{L_n(t)} \bigr),
\\
T_{n,4}(t,r;x) &=& \E \bigl(\bigl\llvert \bigl\langle G(t-\cdot,x-
\ast) D\bigl(X_n(\cdot,\ast)\bigr) 1_{[0,r]}(\cdot),h\bigr
\rangle_{\mathcal{H}_t}\bigr\rrvert ^p1_{L_n(t)} \bigr),
\\
T_{n,5}^k (t,r;x) &=& \E \biggl(\biggl\llvert \int
_0^rG(t-s,\cdot)\star b\bigl(X_n(s,
\cdot)\bigr) (x) \,\mathrm{d}s \biggr\rrvert ^p 1_{L_n(t)}
\biggr).
\end{eqnarray*}


Similarly as for the term $V_1(t,x)$ in Lemma~\ref{ls4.1}, we have
%
\begin{eqnarray}
\label{s4.22} T_{n,1}(t,r;x) &\le& C \biggl(\int
_0^r \mathrm{d}s \int_{\IR^3}
\mu(\mathrm{d}\xi)\bigl \vert\mathcal{F} G(t-s) (\xi)\bigr \vert^2
\biggr)^{\trup{p}{2}-1}
\nonumber
\\
&&{} \times\int_0^r \mathrm{d}s \Bigl[1+
\sup_{(\hat{s},y)\in[0,s]\times\R
^3}\E \bigl(\bigl\llvert X_n(\hat{s},y)
\bigr\rrvert ^{p}1_{L_n(\hat
{s})} \bigr) \Bigr]
\nonumber
\\[-8pt]
\\[-8pt]
&&{}\times\biggl(\int
_{\IR^3}\mu(\mathrm{d}\xi)\bigl \vert\mathcal{F} G(t-s) (\xi)\bigr \vert
^2 \biggr)\nonumber
\\
& \le& C \int_0^r \mathrm{d}s \Bigl[1+ \sup
_{(\hat{s},y)\in[0,s]\times\R
^3}\E \bigl(\bigl\llvert X_n(\hat{s},y) \bigr
\rrvert ^{p}1_{L_n(\hat{s})} \bigr) \Bigr].
\nonumber
\end{eqnarray}

Let $\tau_n$ and $\pi_n$ be as in the proof of Lemma~\ref{lss3.1.2}
(see \eqref{s3.34} and the successive lines). Since $X_n^-(s,y)$ is
$\mathcal{F}_{s_n}$-measurable, the definition of $w^n$ implies
\begin{eqnarray*}
&& T_{n,2}(t,r;x)
\\
&&\quad =\E \biggl(\biggl\llvert \int_0^t\int
_{\R^3} (\pi_n \circ\tau_n)
\bigl[G(t-\cdot,x-\ast)B\bigl(X_n^-(\cdot,\ast)\bigr)
\bigr](s,y)1_{L_n(t)} 1_{[0,r]}(\cdot)M(\mathrm{d}s,\mathrm{d}y)
\biggr\rrvert ^p \biggr).
\end{eqnarray*}
Then, applying Burkholder's inequality, using the boundedness of the
operator $\pi_n \circ\tau_n$, and similar arguments as for the term
$T_{n,1}(t,r;x)$ we obtain
%
\begin{equation}
\label{s4.24} T_{n,2}(t,r;x) \le C \int_0^r
\mathrm{d}s \Bigl[1+ \sup_{(\hat{s},y)\in[0,s]\times\R
^3}\E \bigl(\bigl\llvert
X_n^-(\hat{s},y) \bigr\rrvert ^{p}1_{L_n(\hat{s})}
\bigr) \Bigr].
\end{equation}

To study $T_{n,3}(t,r;x)$, we apply Cauchy--Schwarz and then H\"older's
inequality. This yields
\begin{eqnarray*}
&& T_{n,3}(t,r;x)
\\
&&\quad \le\E \bigl(\bigl\llvert \bigl\llVert w^n 1_{[0,r]}
1_{L_n(t)}\bigr\rrVert _{\mathcal{H}_t}^2\bigl\llVert G(t-
\cdot,x-\ast) \bigl[B( X_n)-B\bigl( X_n^-\bigr) \bigr](
\cdot,\ast) 1_{[0,r]}(\cdot) 1_{L_n(t)}\bigr\rrVert
_{\mathcal{H}_t}^2 \bigr\rrvert ^{{\trup{p}{2}}} \bigr)
\\
&&\quad \le C n^{\trup{3p}{2}}2^{n{\trup{p}{2}}} \E \biggl(\int
_0^t \mathrm{d}s \bigl\llVert G(t-s,x-\ast)
\bigl[B( X_n)-B\bigl(X_n^-\bigr) \bigr](s,\ast)
1_{[0,r]}(s)1_{L_n(s)} \bigr\rrVert _{\mathcal{H}}^2
\biggr)^{{\trup{p}{2}}}
\\
&&\quad \le C n^{\trup{3p}{2}} 2^{n{\trup{p}{2}}} \biggl(\int_0^r
\mathrm{d}s\int_{\R^3} \mu(\mathrm{d}\xi)\bigl \vert\mathcal
{F}G(t-s)\bigr \vert^2(\xi) \biggr)^{\trup{p}{2}-1}
\\
&&\qquad{} \times\int_0^r \mathrm{d}s \sup
_{(\hat{s},y)\in[0,s]\times\R^3}\E \bigl(\bigl\llvert X_n(
\hat{s},y)-X_n^-(\hat{s},u) \bigr\rrvert ^p
1_{L_n(\hat{s})} \bigr)
\\
&&\qquad{}\times \biggl(\int_{\R^3} \mu(\mathrm{d}\xi)
\bigl \vert\mathcal{F}G(t-s)\bigr \vert^2(\xi ) \biggr),
\end{eqnarray*}
where we have used \eqref{s3.101} and the Lipschitz continuity of the
function $B$. By applying \eqref{s4.11}, we obtain
\begin{eqnarray*}
T_{n,3}(t,r;x) &\le& C n^{3p} 2^{-np[{\trup{(3-\beta)}{2}}-{\trup
{1}{2}}]}\int
_0^r \mathrm{d}s \Bigl[ 1 + \sup
_{(\hat{s},y)\in[0,s]\times\R
^3}\E \bigl(\bigl\llvert X_n(\hat{s},y)\bigr
\rrvert ^p 1_{L_n(\hat{s})} \bigr) \Bigr]
\\
&\le& C \int_0^r \mathrm{d}s \Bigl[ 1 + \sup
_{(\hat{s},y)\in[0,s]\times\R
^3}\E \bigl(\bigl\llvert X_n(\hat{s},y)\bigr
\rrvert ^p 1_{L_n(\hat{s})} \bigr) \Bigr],
\end{eqnarray*}
where in the last inequality we have used that
$\sup_n  \{n^{3p} 2^{-np [{\trup{(3-\beta)}{2}}-{\trup
{1}{2}} ]} \}<\infty$.

We now consider $T_{n,4}(t,r;x)$. With similar arguments as those used
in the analysis of $T_{n,3}(t,x)$ in Lemma~\ref{ls4.2}, we prove
%
\begin{equation}
\label{s4.26} T_{n,4}(t,r;x)\le C \int_0^r
\mathrm{d}s \Bigl[1+ \sup_{(\hat{s},y)\in
[0,s]\times\R^3}\E \bigl(\bigl\llvert
X_n(\hat{s},y) \bigr\rrvert ^{p}1_{L_n(\hat{s})} \bigr)
\Bigr].
\end{equation}
Finally, we notice that $T_{n,5}(t,r;x)$ is very similar to
$T_{n,4}(t,x)$ in Lemma~\ref{ls4.2}. With similar arguments as those
used in the analysis of this term, we have
%
\begin{equation}
\label{s4.27} T_{n,5}(t,r;x) \le C \int_0^r
\mathrm{d}s \Bigl[ 1 + \sup_{(\hat{s},y)\in
[0,s]\times\R^3} \E \bigl(\bigl\llvert
X_n(\hat{s},y) \bigr\rrvert ^p 1_{L_n(\hat{s})} \bigr)
\Bigr].
\end{equation}

Bringing together
\eqref{s4.22}, \eqref{s4.24}--\eqref{s4.27} yields
%
\begin{eqnarray}
\label{s4.28}
&&\E\bigl(\bigl |X_n(t,r;x)\bigr |^p1_{L_n(t)}
\bigr)
\nonumber
\\[-8pt]
\\[-8pt]
&&\quad\le C \biggl\{1+ \int_0^r \sup
_{(\hat
{s},y)\in[0,s]\times\R^3} \E \bigl( \bigl\{\bigl\llvert X_n(\hat{s},y)
\bigr\rrvert ^p+ \bigl\llvert X_n^-(\hat {s},y) \bigr
\rrvert ^p \bigr\}1_{L_n(\hat{s})} \bigr)\,\mathrm{d}s \biggr\}.
\nonumber
\end{eqnarray}
Notice that $X_n(t,t;x)=X_n(t,x)$. Hence, for $r:=t$, \eqref{s4.28}
tells us
%
\begin{eqnarray}
\label{s4.29}
&& \E\bigl(\bigl |X_n(t,x)\bigr |^p1_{L_n(t)}
\bigr)
\nonumber
\\[-8pt]
\\[-8pt]
&&\quad\le C \biggl\{1+ \int_0^t \sup
_{(\hat
{s},y)\in[0,s]\times\R^3} \E \bigl( \bigl\{\bigl\llvert X_n(\hat{s},y)
\bigr\rrvert ^p +\bigl\llvert X_n^-(\hat{s},y) \bigr
\rrvert ^p \bigr\}1_{L_n(\hat{s})} \bigr) \,\mathrm{d}s \biggr\}.\nonumber
\end{eqnarray}
Next, take $r:=t_n$ and remember that $X_n(t,t_n;x)=X_n^-(t,x)$. From
\eqref{s4.28}, and since $t_n\le t$, we obtain
%
\begin{eqnarray}
\label{s4.30}
&&\E\bigl(\bigl |X_n^-(t,x)\bigr |^p1_{L_n(t)}
\bigr)
\nonumber
\\[-8pt]
\\[-8pt]
&&\quad\le C \biggl\{1+ \int_0^t \sup
_{(\hat{s},y)\in[0,s]\times\R^3} \E \bigl( \bigl\{\bigl\llvert X_n(\hat{s},y)
\bigr\rrvert ^p+\bigl\llvert X_n^-(\hat {s},y) \bigr
\rrvert ^p \bigr\}1_{L_n(\hat{s})} \bigr) \,\mathrm{d}s \biggr\}.\nonumber
\end{eqnarray}
For $t\in[0,T]$, set
\[
\varphi_n(t)= \sup_{(s,y)\in[0,t]\times\R^3} \E \bigl[
\bigl(\bigl |X_n(s,y)\bigr |^p+\bigl |X_n^-(s,y)\bigr |^p
\bigr)1_{L_n(s)} \bigr].
\]
The inequalities \eqref{s4.29}, \eqref{s4.30} imply
$\varphi_n(t)\le C \{1+ \int_0^t \varphi_n(s)\,\mathrm{d}s \}$.
By Gronwall's lemma, this implies \eqref{s4.18}. Finally, the
inequality \eqref{s4.19} is a consequence of \eqref{s4.11} and \eqref{s4.18}
\end{pf}

\begin{appendix}\label{s5}
\section*{Appendix}

We start this section with a theorem on existence and uniqueness of
solution to a class of equations which in particular applies to \eqref
{s3.7}, and therefore also to \eqref{s1.6}, and to
\eqref{s3.6}. For related results, we refer the reader to \cite
{dalang}, Theorem~13, \cite{dalang-quer}, Theorem~4.3 and \cite
{ortiz2011}, Proposition~4.0.4.
In comparison with these references, here we state the theorem in
spatial dimension $d=3$, and we assume that $G$ is the fundamental
solution of the wave equation in dimension three.
\setcounter{remark}{0}
\begin{theorem}
\label{ts5.1}
Let $G$ denote the fundamental solution to the wave equation in
dimension three and $M$ a Gaussian process as given in the
\hyperref[s1]{Introduction}. Consider the stochastic evolution equation defined by
%
\setcounter{equation}{0}
\begin{eqnarray}
\label{s5.20} Z(t,x)&=&\int_0^t\int
_{\IR^3}G(t-s,x-y) \sigma \bigl(Z(s,y)\bigr)M(\mathrm{d}s,
\mathrm{d}y)
\nonumber
\\
&&{}+\bigl\langle G(t-\cdot,x-\ast)g\bigl(Z(\cdot,\ast)\bigr),H\bigr
\rangle_{\mathcal
{H}_T}
\\
&&{}+\int_0^t \bigl[G(t-s,
\cdot)\star b\bigl(Z(s,\cdot)\bigr)\bigr](x),
\nonumber
\end{eqnarray}
where the functions $\sigma, g, b\dvtx \IR\rightarrow\IR$ are Lipschitz
continuous.
\begin{enumerate}[(ii)]
\item[(i)] Assume that $H=\{H_t, t\in[0,T]\}$ is an $\hac$-valued
predictable stochastic process such that
$C_0:=\sup_\omega\Vert H(\omega)\Vert_{\mathcal{H}_T} <\infty$.

Then, there exists a unique real-valued adapted stochastic process $Z=\{
Z(t,x),\allowbreak   (t,x)\in[0,T]\times\IR^3\}$ satisfying \textup{\eqref{s5.20}}, a.s.,
for all $(t,x)\in[0,T]\times\R^3$. Moreover, the process $Z$ is
continuous in $L^2$ and satisfies
\[
\sup_{(t,x)\in[0,T]\times\IR^3}\E \bigl(\bigl \vert Z(t,x)\bigr \vert^p \bigr) \le
C<\infty,
\]
for any $p\in[1,\infty[$, where the constant $C$ depends among others
on $C_0$.
\item[(ii)] Assume that there exist an increasing sequence of events
$\{\Omega_n, n\ge1\}$ such that $\lim_{n\to\infty}\mathbb
{P}(\Omega_n)=1$, and that $H_n=\{H_n(t), t\in[0,T]\}$ is a sequence
of $\hac$-valued predictable stochastic processes such that
$C_n:=\sup_\omega\Vert H(\omega)1_{\Omega_n}(\omega)\Vert
_{\mathcal{H}_T} <\infty$. Then, the conclusion on existence and
uniqueness of solution to \textup{\eqref{s5.20}} stated in part (\textup{i}) also holds.
\end{enumerate}
The process $Z$ is termed a \textit{random field solution} to \textup{\eqref{s5.20}}.
\end{theorem}

\begin{pf*}{Sketch of the proof}
We start with part (i). Consider the
Picard iteration scheme
\[
Z^0(t,x) =0,\vadjust{\goodbreak}
\]
\begin{eqnarray*}
Z^{(k+1)}(t,x) &=&\int_0^t\int
_{\IR^3}G(t-s,x-y) \sigma \bigl(Z^{(k)}(s,y)\bigr)M(
\mathrm{d}s,\mathrm{d}y)
\\
&&{}+\bigl\langle G(t-\cdot,x-\ast)g\bigl(Z^{(k)} (\cdot,\ast)\bigr),H
\bigr\rangle _{\mathcal{H}_T}
 +\int_0^t
\bigl[G(t-s,\cdot)\star b\bigl(Z^{(k)} (s,\cdot)\bigr)\bigr](x),
\end{eqnarray*}
$k\ge0$.

Fix $p\in[2,\infty[$. First, we prove by induction on $k\ge0$ that
%
\begin{equation}
\sup_{(t,x)\in[0,T]\times\IR^3}\E \bigl(\bigl \vert Z^{k}(t,x)\bigr \vert
^p \bigr)\le C<\infty,
\end{equation}
with a constant $C$ independent of $k$.
Second, we prove that
\begin{eqnarray*}
&&\sup_{x\in\IR^3}\E \bigl(\bigl \vert Z^{(k+1)}(t,x)-Z^{(k)}(t,x)
\bigr \vert ^p \bigr)
\\
&&\quad \le C(1+C_0) \biggl[\int_0^t
\mathrm{d}s \sup_{y\in\IR^3}\E \bigl(\bigl \vert Z^{(k)}(s,y)-Z^{(k-1)}(s,y)
\bigr \vert^p \bigr) \biggr].
\end{eqnarray*}
With this, we conclude that the sequence of processes $\{Z^{(k)}(t,x),
(t,x)\in[0,T]\times\IR^3\}$, $k\ge0$ converges in $L^p(\Omega)$ as
$k\to\infty$, uniformly
in $(t,x)\in[0,T]\times\IR^3$. The limit is a random field that
satisfies the properties of the statement. We refer the reader to
\cite{dalang,dalang-quer,ortiz2011}, for more
details on the proof.

The proof of part (ii) is done by localizing the preceding Picard
scheme using the sequence $\{\Omega_n, n\ge1\}$.
\end{pf*}

In comparison with the equation considered in \cite{dalang-quer}, Theorem~4.3, \eqref{s5.20} has null initial conditions, and the
extra term
$\langle G(t-\cdot,x-\ast)g(Z(\cdot,\ast)),H\rangle_{\mathcal{H}_T}$.

Part (i) of Theorem~\ref{ts5.1} can be applied to \eqref{s1.6},
\eqref{s3.7}. Therefore, we have
%
\begin{equation}
\label{s5.21} \sup_{(t,x)\in[0,T]\times\IR^3}\E \bigl(\bigl \vert X(t,x)
\bigr \vert^p \bigr) <\infty.
\end{equation}
Let $\Omega_n=L_n(t)$ as given in \eqref{localization}. The sequence
$H_n:=w^n$ defined in \eqref{s3.3} satisfies the assumptions of part
(ii) of Theorem~\ref{ts5.1} (see \eqref{s3.101}). Therefore the
conclusion applies to the stochastic process solution of \eqref{s3.6}.

%
\begin{remark}
\label{r5.1}
Set $Z^{(z)}(s,x)=Z(s,x+z)$, $z\in\R^3$. Similarly as in \cite
{dalang}, we can argue that the finite dimensional distributions of the
process $\{Z^{(z)}(s,x), (s,x)\in[0,T]\times\R^3\}$ do not depend on
$z$. This is a consequence from the fact that the martingale measure
$M$ has a spatial stationary covariance, and that the initial condition
of the SPDE vanishes.
\end{remark}

At several points, we have applied the following version of Gronwall's
lemma whose proof can be found in \cite{Ba-Si}, Theorem~4.9.
%
\begin{lemma}
\label{ls5.1}
Let $u$, $b$ and $k$ be nonnegative continuous functions defined on the
interval $J=[\alpha,\beta]$. Let $\bar p\ge0$, $\bar p\ne1$ and
$a>0$ be constants.
Suppose that
\[
u(t)\le a +\int_\alpha^t b(s) u(s) \,\mathrm{d}s+
\int_\alpha^t k(s) u^{\bar
p}(s) \,
\mathrm{d}s,\quad t\in J.
\]
Then
%
\begin{equation}
\label{s5.1} u(t)\le\exp \biggl(\int_\alpha^\beta
b(s) \,\mathrm{d}s \biggr) \biggl[a^{\bar
q} + \bar q \int
_\alpha^\beta k(s) \exp \biggl(-\bar q\int
_\alpha^s b(\tau) \,\mathrm{d}\tau \biggr) \,\mathrm{d}s
\biggr]^{\trup{1}{\bar q}},
\end{equation}
for every $t\in[\alpha,\beta_1)$, where $\bar q=1-\bar p$ and $\beta
_1$ is choosen so that the expression between $[\cdots]$ is positive in the
subinterval $[\alpha,\beta_1)$ ($\beta_1=\beta$ if $\bar q>0$).
\end{lemma}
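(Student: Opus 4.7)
\textbf{Proof plan for Lemma \ref{ls5.1}.} The strategy is the classical Bihari/Bernoulli approach: replace $u$ by a differentiable majorant $v$, convert the resulting differential inequality into a linear one through the substitution $w=v^{\bar q}$, solve the linear inequality with an integrating factor, and finally undo the substitution, taking care of the two sign regimes $\bar p<1$ and $\bar p>1$.

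First I would define
\begin{equation*}
v(t)=a+\int_\alpha^t b(s)\,u(s)\,ds+\int_\alpha^t k(s)\,u^{\bar p}(s)\,ds,\qquad t\in J,
\end{equation*}
so that $u(t)\le v(t)$, $v(\alpha)=a>0$, $v$ is $C^1$ and nondecreasing, and $v'(t)=b(t)u(t)+k(t)u^{\bar p}(t)\le b(t)v(t)+k(t)v^{\bar p}(t)$. This is the core reduction: instead of an integral inequality for $u$, we now face a Bernoulli-type pointwise inequality for the smooth, strictly positive function $v$.

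Next I would linearize. Since $v>0$, multiplying the inequality by $v^{-\bar p}$ preserves it, and with $w:=v^{\bar q}$ (so that $w'=\bar q\,v^{-\bar p}v'$) one obtains
\begin{equation*}
\tfrac{1}{\bar q}\,w'(t)\le b(t)\,w(t)+k(t).
\end{equation*}
In the case $\bar q>0$ (i.e.\ $\bar p<1$) this reads $w'\le \bar q\,b\,w+\bar q\,k$; using the integrating factor $\mu(t)=\exp(-\bar q\int_\alpha^t b)$, integrating from $\alpha$ to $t$, and remembering $w(\alpha)=a^{\bar q}$ gives
\begin{equation*}
w(t)\le\exp\!\Bigl(\bar q\!\int_\alpha^t b(s)\,ds\Bigr)\Bigl[a^{\bar q}+\bar q\!\int_\alpha^t k(s)\,\mu(s)\,ds\Bigr].
\end{equation*}
Raising to the power $1/\bar q>0$ (monotone) and using $u\le v=w^{1/\bar q}$ together with monotonicity in $t$ to replace $\int_\alpha^t$ by $\int_\alpha^\beta$ (this can only enlarge both factors) yields \eqref{s5.1}. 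In the case $\bar q<0$ (i.e.\ $\bar p>1$) dividing by $\bar q$ reverses the inequality, and the integrating factor argument produces the reversed bound $w(t)\ge\exp(\bar q\int_\alpha^t b)[a^{\bar q}+\bar q\int_\alpha^t k\,\mu\,ds]$; taking the $1/\bar q$ power flips the inequality a second time, giving again \eqref{s5.1}, but only as long as the bracket $[\,\cdot\,]$ remains strictly positive so that $x\mapsto x^{1/\bar q}$ is well defined and monotone decreasing---this is exactly the rationale for restricting to $[\alpha,\beta_1)$.

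The main subtlety to keep track of will be sign bookkeeping in the regime $\bar q<0$: three inequality reversals occur (multiplication/division by $\bar q$, and raising to the negative power $1/\bar q$), and the step replacing $\int_\alpha^t$ by $\int_\alpha^\beta$ must be justified by noting that although the bracket decreases as $t$ grows, a positive quantity to a negative power becomes larger, so the full right-hand side still dominates $v(t)$. Once these sign checks are made, the two cases can be written in the unified form \eqref{s5.1}, with the caveat on $\beta_1$ stated exactly as in the lemma.
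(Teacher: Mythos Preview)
Your proof is correct and follows the standard Bihari/Bernoulli-linearization argument. The paper, however, does not give its own proof of this lemma: it merely states the result and refers to \cite[Theorem~4.9]{Ba-Si} (Bainov--Simeonov) for the proof. Your argument is exactly the classical one found in that reference, so there is nothing to compare beyond noting that you have supplied what the paper outsources.

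One small remark: the bound \eqref{s5.1} as printed carries $\int_\alpha^\beta$ rather than $\int_\alpha^t$ in both the exponential and the bracket. Your derivation naturally produces the sharper version with $\int_\alpha^t$, and you correctly observe that passing to $\int_\alpha^\beta$ only enlarges the right-hand side in the regime $\bar q>0$. In the regime $\bar q<0$ the bracket with $\int_\alpha^\beta$ is a fixed constant, so the phrase ``positive in the subinterval $[\alpha,\beta_1)$'' is slightly awkward as stated; the intended meaning (and the version in \cite{Ba-Si}) is with $\int_\alpha^t$, for which your positivity discussion and the definition of $\beta_1$ make perfect sense.
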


In the proof of Theorem~\ref{ts3.1}, we have used the lemma below. For
its proof, we refer the reader to Lemma A.2 in \cite{milletss2},
with a trivial change on the spacial dimension
($d=3$ in \cite{milletss2}, while $d=4$ in Lemma~\ref{ls5.2}).

\begin{lemma}
\label{ls5.2}
Fix $[t_0,T]$ with $t_0\ge0$ and a compact set $K\subset\IR^3$.
Let $\{Y_n(t,x),(t,x)\in[t_0,T]\times K, n\ge1\}$ be a sequence of
processes and
$\{B_n(t), t\in[t_0,T]\}\subset\mathcal{F}$ be a sequence of adapted
events which, for every n, decreases in $t$. Assume that for every
$p\in\,]1,\infty[$ the following conditions hold:
\begin{enumerate}[(P2)]
\item[(P1)] There exists $\delta>0$ and $C>0$ such that, for any
$t_0\le t\le\bar{t}\le T$, $x,\bar{x}\in K$,
\[
\sup_n \E \bigl(\bigl |Y_n(t,x)-Y_n(
\bar{t},\bar{x})\bigr |^p 1_{B_n(\bar{t})} \bigr)\le C \bigl(|t-\bar{t}|+|x-
\bar{x}| \bigr)^{4+\delta}.
\]
\item[(P2)] For every $(t,x)\in[t_0,T]\times K$,
\[
\lim_{n\to\infty} \E \bigl(\bigl |Y_n(t,x)\bigr |^p
1_{B_n(t)} \bigr)=0.
\]
\end{enumerate}

Then, for any $\eta\in\,]0,\delta/p[$ and any $r\in[1,p[$,
\[
\lim_{n\to\infty} \E\bigl(\|Y_n\|_{\eta,t_0,K}^r
1_{B_n(T)} \bigr) =0.
\]
\end{lemma}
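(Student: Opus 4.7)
The plan is to carry out a Kolmogorov--Chentsov / Garsia--Rodemich--Rumsey (GRR) argument adapted to the localization $1_{B_n(\cdot)}$, followed by a discretization that trades the uniform H\"older control for the pointwise convergence provided by (P2). The key numerical fact is that $[t_0,T]\times K\subset\IR^4$ is a $4$-dimensional set and the exponent $4+\delta$ in (P1) exceeds this dimension by exactly $\delta$.

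First I would prove the uniform bound
\begin{equation*}
\sup_n \E\bigl(\Vert Y_n\Vert_{\eta',t_0,K}^p\, 1_{B_n(T)}\bigr)\le C_{p,\eta'}<\infty
\end{equation*}
for every $p>1$ and every $\eta'\in\,]0,\delta/p[$. Pointwise in $\omega$, GRR bounds $\Vert Y_n(\omega)\Vert_{\eta',t_0,K}^p$ by a double integral on $([t_0,T]\times K)^2$ of $|Y_n(z_1)-Y_n(z_2)|^p$ against an explicit kernel whose integrability against $dz_1\,dz_2$ requires $\eta'<\delta/p$. Multiplying by $1_{B_n(T)}$ and taking expectations, the monotonicity of $B_n(\cdot)$ (which ensures $1_{B_n(T)}\le 1_{B_n(t\vee\bar t)}$) allows one to invoke (P1) termwise and produce a finite bound independent of $n$.

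Next, fix $\eta<\eta'<\delta/p$ and, for $\delta_0>0$, choose a finite $\delta_0$-net $\mathcal N_{\delta_0}$ of $[t_0,T]\times K$. An elementary comparison shows that for every continuous $g:[t_0,T]\times K\to\IR$,
\begin{equation*}
\Vert g\Vert_{\eta,t_0,K} \le C\Bigl(\delta_0^{-\eta}\max_{(t,x)\in\mathcal N_{\delta_0}}|g(t,x)| + \delta_0^{\eta'-\eta}\Vert g\Vert_{\eta',t_0,K}\Bigr).
\end{equation*}
Applying this to $Y_n$, multiplying by $1_{B_n(T)}$, using $1_{B_n(T)}\le 1_{B_n(t)}$ at each grid time, and taking $L^r$-norms yields
\begin{equation*}
\bigl\Vert \Vert Y_n\Vert_{\eta,t_0,K}\, 1_{B_n(T)}\bigr\Vert_r \le C\,\delta_0^{\eta'-\eta}\bigl\Vert \Vert Y_n\Vert_{\eta',t_0,K}\, 1_{B_n(T)}\bigr\Vert_r + C\,\delta_0^{-\eta}\sum_{(t,x)\in\mathcal N_{\delta_0}}\bigl\Vert Y_n(t,x)\, 1_{B_n(t)}\bigr\Vert_r.
\end{equation*}
Given $\epsilon>0$, the first term on the right is at most $\epsilon/2$ uniformly in $n$ once $\delta_0$ is chosen small (by the previous step together with the inclusion $L^p\subset L^r$ on a probability space for $r<p$). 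For this fixed $\delta_0$, condition (P2) together with $r<p$ shows that the second term tends to $0$ as $n\to\infty$, hence is $\le\epsilon/2$ for large $n$. This establishes the conclusion.

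The main obstacle is the first step: converting the localized moment bound (P1), in which the indicator depends on the larger time $\bar t$, into a bound on the full H\"older seminorm localized by the single indicator $1_{B_n(T)}$. The monotonicity of $B_n(\cdot)$ in $t$ is essential, as it is precisely what makes the GRR double integral amenable to (P1); once that step is in place, the second paragraph is a standard discretization argument.
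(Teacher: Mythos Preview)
Your proposal is correct. The paper does not prove this lemma but simply refers to Lemma~A.2 of \cite{milletss2} (with the ambient dimension changed from three to four); the argument there is precisely the localized Garsia--Rodemich--Rumsey estimate, exploiting the monotonicity $1_{B_n(T)}\le 1_{B_n(\bar t)}$, followed by the finite-net interpolation between $\eta$ and $\eta'$ that you outline.
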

\end{appendix}

\section*{Acknowledgements}

The authors would like to thank Robert Dalang for bringing our
attention to \eqref{fundamental}.

This work was supported by the grant MICINN-FEDER MTM 2009-07203 from
the \textit{Direcci\'on General de
Investigaci\'on, Ministerio de Educaci\'on y Ciencia, Spain}.


%

\printhistory

\end{document}